\newcommand\commented[3]{{$\langle$ {\color{#1}  #2 }{#3} $\rangle$}}
\newcommand\hide[1]{\commented{gray}{Hidden:}{#1}}
\renewcommand\hide[1]\empty
\newcommand\extended[1]{\commented{blue}{Extra:}{#1}}
\renewcommand\extended[1]\empty
\newcommand\DS[1]{\commented{blue}{DS:}{\it   #1}}
\renewcommand\DS[1]\empty
\newcommand\ISH[1]{\commented{magenta}{ISH:}{{\it   #1}}}
\renewcommand\ISH[1]\empty
\newcommand\newISH[1]{\commented{red}{ISH, Oct. 2018 :}{{\it   #1}}}
\renewcommand\newISH[1]\empty
\newcommand\novISH[1]{\commented{blue}{I, Nov. :}{{\it   #1}}}
\renewcommand\novISH[1]\empty
\newcommand\marISH[1]{\commented{blue}{I, March:}{{\it   #1}}}
\renewcommand\marISH[1]\empty
\newcommand\novDS[1]{\commented{red}{D, Nov.  :}{{\it   #1}}}
\renewcommand\novDS[1]\empty
\newcommand\now[1]{\commented{red}{IS, today: }{#1}}
\renewcommand\now[1]{\empty}
\newcommand\DSnow[1]{\commented{violet}{DS, today: }{#1}}
\renewcommand\DSnow[1]{\empty}
\def\filtR{\clS}
\def\filtR{\overline{\clR}}
\newcommand\Pair[2]{#2[#1]}
\def\KL{\Pair{K}{L}}
\def\Th{\mathit{Th}}
\def\Ths{\classts{T}}
\def\clS{\classts{S}}
\def\Mod{\mathit{Mod}}
\newtheorem{theorem}{Theorem}
\newtheorem{lemma}[theorem]{Lemma}
\newtheorem{proposition}[theorem]{Proposition}
\newtheorem{corollary}[theorem]{Corollary}
\newtheorem*{theorem*}{Theorem}
\newtheorem*{lemma*}{Lemma}
\newtheorem*{proposition*}{Proposition}
\newtheorem*{prop*}{Proposition}
\newtheorem*{corollary*}{Corollary}
\newtheorem*{cor*}{Corollary}
\theoremstyle{remark}
\newtheorem{remark}{Remark}
\newtheorem*{remark*}{Remark}
\newtheorem*{examp*}{Example}
\newtheorem*{question*}{Question}
\def\tiff{\; \text{iff}\;}
\theoremstyle{definition}
\newtheorem{definition}{Definition}
\newtheorem*{definition*}{Definition}
\newtheorem{example}{Example}
\newtheorem*{example*}{Example}
\newtheorem*{examples*}{Examples}
\def\kpp{{(\kappa)}}
\def\conc{{\smallfrown}}
\newcommand\vlf[1]{ \| #1 \|}
\def\vl{{\vlf{\!\cdot\!}}}
\newcommand\logicts[1]{\mathrm{#1}}
\newcommand\LogicNamets[1]{\logicts{#1}}
\newcommand\lS[1]{\LogicNamets{S#1}}
\def\ML{\logicts{MF}}
\def\PV{\logicts{PV}}
\def\MTh{\logicts{MTh}}
\def\MLog{\logicts{MLog}}
\def\vf{\varphi}
\def\ZFC{\logicts{ZFC}}
\def\PA{\logicts{PA}}
\def\isom{\simeq}
\def\MF{\mathrm{MF}}
\newcommand\rulets[1]{\mathop{#1}}
\def\Sub{\rulets{Sub}}
\def\quot{\geq}
\def\mvf{\upvarphi}
\def\mpv{\mathsf p}
\def\mpsi{\uppsi}
\def\submod{\sqsubseteq}
\def\supmod{\sqsupseteq}
\newcommand\structurets[1]{\mathfrak #1}
\def\stA{\structurets{A}}
\def\stB{\structurets{B}}
\def\frF{\structurets{F}}
\def\frT{\structurets{T}}
\def\mM{\structurets{M}}
\newcommand\classts[1]{\mathcal{#1}}
\def\clC{\classts{C}}
\def\clA{\classts{A}}
\def\clD{\classts{D}}
\def\clP{\classts{P}}
\def\Rmax{\clR_\Ths^\diamond}
\def\Rsimmax{\clR_\sim^\diamond}
\def\clR{\classts{R}}
\def\clV{\classts{V}}
\def\L{L}
\def\mo{\models}
\def\Imp{\Rightarrow}
\def\imp{\rightarrow}
\def\Di{\Diamond}
\def\con{\wedge}
\def\mo{\vDash}
\def\EE{\exists}
\def\AA{\forall}
\def\Iff{\Leftrightarrow}
\def\lra{\leftrightarrow}
\newcommand\dom{\mathrm{dom}}
\newcommand\ran{\mathrm{ran}}
\title{On modal logics of model-theoretic relations\footnote{The work on this paper was supported by the Russian Science Foundation under grant 16-11-10252 and carried out at Steklov Mathematical Institute of Russian Academy of Sciences.}}
\author[a,b]{Denis I.~Saveliev}
\author[a,b]{Ilya B.~Shapirovsky}
\affil[a]{{\small Steklov Mathematical Institute of Russian Academy of Sciences}}
\affil[b]{{\small 
Institute for Information Transmission Problems of Russian Academy of Sciences
}}
\begin{document}

\maketitle

\sloppy




\begin{abstract}
Given a class $\clC$ of models, a~binary
relation~$\clR$ between models, and a~model-theoretic
language~$L$, we consider the modal logic and the
modal algebra
of the theory
of $\clC$ in~$L$ where
the modal operator is interpreted via~$\clR$.
We discuss how modal theories of $\clC$ and $\clR$ depend on the model-theoretic language, their Kripke completeness,
and expressibility of the modality inside $L$.
We calculate such theories for the submodel and the quotient relations.
We prove a downward L\"owenheim--Skolem theorem
for first-order language expanded with
the modal operator for the extension relation between models.

\medskip
\noindent
Keywords: \textit{modal logic,
modal algebra, robust modal theory,
logic of submodels,
logic of quotients,
logic of forcing,
provability logic,
model-theoretic logic
}
\end{abstract}

\section*{Introduction}
We consider modal systems in which the modal operator is interpreted via a binary relation on a class of models.
Many instances of such systems can be found in the literature.
During the last years, modal logics of various relations between models of set theory have been studied,
see, e.g., \cite{Hamkins03,HamkLowe,BlockLowe2015,ForcStruct2015,innerMods2016}.
A well established area in provability logic deals with modal axiomatizations of relations between models of arithmetic (and between arithmetic theories), see, e.g., \cite{Shavrukov1988,Berarducci1990,Ignatiev93,VisserBigFirst,Visser14, Henk2015,HamkinsArithmeticPotentialism2018}.
In another extensively studied area modalities are interpreted by relations
between  Kripke and temporal models,
see, e.g., \cite{Veltman96, agostino_hollenberg_2000,TemporalSubst2013}
or the monograph~\cite{vanBenthem2014}.
In \cite{BarwiseVBenthem1999}, the consequence along
an abstract relation between models is studied, which
is closely related to our consideration.


\smallskip

Let  $f$ be a unary operation on sentences of
a~model-theoretic language $L$, and $T$ a~set of sentences
of~$L$ (e.g., the set of theorems in a~given calculus, or
the set of sentences valid in a~given class of models).
Using the propositional modal language, one can consider
the following ``fragment'' of~$T$: variables are evaluated
by sentences of $L$, and $f$ interprets the modal operator;
the {\em modal theory of $f$ on~$T$}, or just the
{\em $f$-fragment of $T$}, is defined as the set of those
modal formulas which are in $T$ under every valuation.
A well-known example of this approach is a~complete modal
axiomatization of formal provability in Peano arithmetic
given by Solovay~\cite{Solovay1976}.
Another important example is the theorem by Hamkins and
L\"{o}we axiomatizing the modal logic of forcing
(introduced earlier by Hamkins in~\cite{Hamkins03})
where the modal operator expresses satisfiability in
forcing extensions~\cite{HamkLowe}. Both these modal
systems have good semantic and algorithmic properties;
in particular, they have the finite model property, are
finitely axiomatizable, and hence decidable.

These examples inspire the following observation.
Let $\clC$ be an arbitrary class of models of the same
signature, $T=\Th^L(\clC)$ the theory of $\clC$ in
a~model-theoretic language~$L$, and $\clR$ a~binary relation
on~$\clC$. Assuming that the satisfiability in $\clR$-images
of models in $\clC$ can be expressed by an operation $f$
on sentences of~$L$, i.e., for every sentence $\vf$ of~$L$,
and every $\stA\in\clC$,
\begin{quote}
$\stA\mo f(\varphi)$
\;(``$\varphi$~is possible at~$\stA$'')\;
iff $\stB\mo\vf$ for some $\mathfrak B$ with
$\stA\,\clR\,\stB$,
\end{quote}
we can define the {\em modal theory of $\clR$ in~$L$}
as the $f$-fragment of~$T$. In the
general frame semantics, this modal theory is
characterized by an enormous structure
$
(\mathcal C,\mathcal R,\mathcal C_\vf:
\vf\text{ is a~sentence of~}L)
$
where $\clC_\vf$~is the class of models in~$\clC$
validating~$\vf$.
We can also define the {\em modal Lindenbaum algebra
of $\Th^L(\clC)$ and~$\clR$}, i.e., the Boolean algebra
of sentences of $L$ modulo the equivalence on~$\clC$,
endowed with the modal operator induced by~$f$.%
\footnote{Modal algebras of theories are broadly used
in provability logic, where they are called
{\em Magari} (or {\em diagonizable}) {\em algebras};
they are known to keep a~lot of information about
 theories containing arithmetic
(see, e.g.,~\cite{ShavrukovPhD93}).
}
In Section~\ref{sec:defs}, we provide formal definitions
and basic semantic tools for such modal theories.
In particular, the algebra of $\Th^L(\clC)$ and $\clR$
can be represented as the modal algebra of a~general frame
consisting of complete theories $\{\Th^L(\stA):\stA\in\clC\}$.
We use this in Section~\ref{sec:sub}, where we calculate
modal logics of the submodel relation~$\supmod$; to express
the satisfiability in submodels, we use second-order language.

In Section \ref{sec:upanddown},%
\footnote{A~significant part of Sections
\ref{sec:upanddown} and \ref{sec:rob} was motivated
by reviewers' questions on an earlier version of this paper.}
we discuss the situation when the $\clR$-satisfiability
is not expressible in a~language~$K$ (for example,
this situation is typical when $K$ is first-order).
%
In this case the modal algebra of $\Th^K(\clC)$ can be
defined as the subalgebra of the modal algebra of
$\Th^L(\clC)$ generated by the sentences of~$K$, for
any $L$ stronger than~$K$ where the $\clR$-satisfiability
on~$\clC$ is expressible: the resulting modal algebra
(and hence, its modal logic) does not depend on the way
how we extend the language~$K$. Under a~natural assumption
on $\clC$ and~$\clR$, such an $L$ can always by
constructed, and hence, {\em the} modal algebra of
$\Th^K(\clC)$ and $\clR$ is well-defined for arbitrary~$K$
(however, the resulting modal logic is not necessarily
a~``fragment'' of the theory $\Th^K(\clC)$ anymore).
Then we consider the finitary first-order language
%
expanded with the modal operator for the extension
relation~$\submod$ and prove a~version of the downward
L\"owenheim--Skolem theorem for this language.

In general, modal theories of $\clR$ depend on the
model-theoretic language we consider. We say that
a modal theory of $\clR$ is {\em robust} iff making
the language stronger does not alter this theory
(intuitively, the robust theory can be considered as
the~``true'' modal logic of the model-theoretic
relation~$\mathcal R$). We discuss this notion in
Section~\ref{sec:rob}. In Theorem~\ref{thm: robust logicGOOD},
we show that under a~certain assumption on $\clC$ and~$\clR$,
the robust logic is Kripke complete. Then we use this
theorem to describe robust theories of the quotient and
the submodel relations on certain natural classes.

A~preliminary report on some results in Sections \ref{sec:defs}
and \ref{sec:sub} can be found in~\cite{SavelievShapirovsky2016}.

\section{Preliminaries}
To simplify reading, as a~rule we denote the syntax of
our object languages differently: we use inclined letters
if they are related to model theory ($x,y,\ldots$ for
individual variables, $\varphi,\psi,\ldots$ for formulas),
and upright letters if they are related to modal logic
($\mathsf p,\mathsf q,\ldots$ for propositional variables,
$\upvarphi,\uppsi,\ldots$ for formulas).

\paragraph{Model-theoretic languages}

The languages we use for model theory are model-theoretic
languages in sense of~\cite{BarwiseFeferman} (where they
are called ``model-theoretic logics'').
We are
\hide{
For the precise
definition, we refer the reader to~\cite{BarwiseFeferman}
\marISH{A kilo-page book, maybe  we need a more precise reference? Also,
this sounds like a repeat.
 Moreover, not every ``logic'' contains
$L_{\omega,\omega}$. }
}%
pointing out here only two of their features, which will
be essential for the further investigations: if $L$~is
a~model-theoretic language under our consideration, then:
\begin{itemize}
\item[(i)]
Satisfiability in~$L$ is preserved under isomorphisms;
\item[(ii)]
$L$~includes $L_{\omega,\omega}$, the standard first-order
language with usual finitary connectives and quantifiers.
\end{itemize}

Also, we assume that $L$~is a~set unless otherwise specified.
For a~model $\stA$, $\Th^L(\stA)$, or just $\Th(\stA)$,
denotes its theory in $L$, i.e., the set of all sentences
of $L$ holding in~$\stA$; likewise for a~class of models.

\paragraph{Modal logic}\label{subs:basicML}
By a~{\em modal logic} we shall mean a~normal propositional
unimodal logic (see, e.g., \cite{BRV-ML} or~\cite{CZ}).
Modal formulas are built from a~countable set~$\PV$
of propositional variables
$\mathsf{p},\mathsf{q},\ldots$\,,
the Boolean connectives,
and the modal connectives $\Di$,~$\Box$.
We assume that $\bot$, $\imp$, $\Di$ are basic,
and others are abbreviations; in particular,
$\Box$~abbreviates $\neg\,\Di\,\neg$\,.
Let $\ML$~denote the set of modal formulas.
A~set of modal formulas is called a~{\em normal
modal logic} iff it contains all classical
tautologies, two following modal axioms:
$$\neg\,\Di\bot
\;\;\text{and}\;\;
\Di(\mathsf p\vee\mathsf q)\to\Di\mathsf p\vee\Di\mathsf q,
$$
and is closed under three following rules of inference:
$$
\text{Modus Ponens}\;
\frac{\mvf,\;\mvf\to\mpsi}{\mpsi},
\;\;
\text{Substitution}\;
\frac{\mvf(\mathsf p)}{\mvf(\mpsi)},
\;\;
\text{Monotonicity}\;
\frac{\mvf\to\mpsi}{\Di\mvf\to\Di\mpsi}.
$$

A~{\em general frame}~$\frF$ is a~triple $(W,R,\clV)$
where $W$~is a~nonempty set, $R\subseteq W\times W$,
and $\clV$~is a~subalgebra of the powerset algebra
$\clP(W)$ closed under~$R^{-1}$ (i.e., such that
$R^{-1}(A)=\{x\in W:\exists y\in A\,\,x R y\}$
is in~$\clV$ whenever $A$~is in~$\clV$).
By a~{\em Kripke frame\/} $(W,R)$ we mean $(W,R,\clP(W))$.
We shall often use the term {\em frame} for
a~general frame (and never for a~Kripke frame).
A {\em modal algebra} is a~Boolean algebra endowed
with a~unary operation that distributes w.r.t.~finite
disjunctions. The {\em modal algebra of a~frame}
$(W,R,\clV)$ is $(\clV,R^{-1})$.

A~{\em Kripke model}~$\mathfrak M$ on a~frame
$\mathfrak F=(W,R,\clV)$ is a~pair $(\mathfrak F,\theta)$
where $\theta:\PV\to\clV$ is a~{\em valuation}.
The {\em truth relation\/} $\mM,x\mo \mvf$
(``$\mvf$~is true at $x$ in~$\mM$'')
is defined in the standard way (see, e.g.,~\cite{CZ});
in particular, $\mM,x\mo\Di\mvf$ iff $\mM,y\mo\mvf$
for some $y$ with $xRy$.
A~formula $\mvf\in\MF$ is {\em true in a~model~$\mM$}
iff it is true at every $x$ in~$\mM$,
and $\mvf$~is {\em valid in a~frame~$\frF$}
iff it is true in every model on~$\mathfrak F$.
A modal formula $\mvf$
is {\em valid in a~modal algebra}~$\stA$
iff $\mvf=\top$ holds in~$\stA$. In particular, it follows
that if $\stA$ is the modal algebra of a frame $\frF$, then
$\mvf$ is valid in~$\stA$ iff $\mvf$~is valid in~$\frF$.
The set $\MLog(\frF)$ of all valid in~$\frF$
formulas is called the ({\em modal})
{\em logic of the frame~$\frF$}; likewise for algebras.
It is well-known (see, e.g., \cite{BRV-ML} or~\cite{CZ})
that a~set of formulas is a~consistent normal logic iff
it is the logic of a~general frame iff it is the logic
of a~non-trivial modal algebra.

In our paper we also consider modal logics of
general frames $(\mathcal C,\mathcal R,\mathcal V)$
with a~proper class~$\mathcal C$;
see Remarks \ref{rem:claclaclass1}
and~\ref{rem:claclaclassGeneral} in the next section.

$\lS{4}$~is the smallest logic containing the formulas
$\Di\Di\mathsf p\imp\Di\mathsf p$
and $\mathsf p\imp\Di\mathsf p$,
$\lS{4.2}$~is $\lS{4}+\Di\Box\mathsf p\;{\rightarrow}\;\Box\Di\mathsf p$,
and $\lS{4.2.1}$ is
$
\lS{4}+
\Box\Di\mathsf p\;{\leftrightarrow}\;\Di\Box\mathsf p
$,
where $\Lambda+\mvf$ is the smallest logic
that includes $\Lambda\cup\{\mvf\}$.

\section{Modal theories of model-theoretic relations}\label{sec:defs}

\paragraph{Definitions}
Fix a~signature~$\Omega$ and a~language $L=L(\Omega)$
based on~$\Omega$. Let $L_s$~be the set of all sentences
of~$L$.

Consider a~unary operation~$f$ on sentences of~$L$.
A~({\em propositional}) {\em valuation in~$L$} is
a~map from $\PV$ to~$L_s$. A~valuation~$\vl$ extends
to the set~$\MF$ of modal formulas as follows:
\begin{align*}
\vlf{\bot}
&=
\bot,
\\
\vlf{\mvf\to\mpsi}
&=
\vlf{\mvf}\to\vlf{\mpsi},
\\
\vlf{\Di\mvf}
&=
f(\vlf{\mvf}).
\end{align*}
Given a~set of sentences $T\subseteq L_s$,
we define
the {\em modal theory of $f$ on $T$}  $\mathrm{MTh}(T,f)$
as the set of modal formulas $\mvf$ such that
  for every valuation $\vl$ in $L$,  $\vlf{\mvf}$ is in $T$.
Thus, modal formulas can be viewed as axiom schemas (for sentences) and we can think of the modal theory of $f$ on $T$ as a fragment of $T$.

\begin{example}
Let $L$~be the usual first-order finitary language
$L_{\omega,\omega}$.

1.
Let $\Omega$~be the signature of arithmetic
and
$f(\varphi)$ express the consistency of a sentence $\varphi$ in Peano arithmetic~$\logicts{PA}$. By the well-known Solovay's results~\cite{Solovay1976},
the modal theory of $f$ on $\PA$ is the G\"odel--L\"ob
logic~$\logicts{GL}$, and
the modal theory of $f$ on the true arithmetic~$\logicts{TA}$, the
set of all sentences that are true in the standard model of arithmetic, is the (quasi-normal but not normal) Solovay logic~$\logicts{S}$.
A~survey on provability logics can be found
 in~\cite{ArtemovBekl04Prov}.

2.
Let $\Omega$~be the signature of set theory
(i.e., $\Omega=\{\in\}$) and $f(\varphi)$~express that
$\varphi$ holds in a~generic extension. By Hamkins and
L\"owe~\cite{HamkLowe}, the modal theory of $f$
on $\logicts{ZFC}$ is~$\logicts{S4.2}$.
\end{example}

In our work, we are  interested in the case when
$T$~is the theory of some class $\clC$ of models
of~$\Omega$.
An easy observation shows that in this case the
modal theory is closed under the rules of Modus Ponens
and Substitution. Notice that it is not necessarily
closed under Monotonicity, as the instance of the
Solovay logic~$\logicts{S}$ shows.%
\footnote{Let us also mention the logic of {\em pure
provability} introduced by Buss in~\cite{buss1990},
which is not closed under the rule of substitution.}
However, as we shall see, the modal theory is a~normal
logic whenever $f$~expresses the satisfiability in
images of a~binary relation between models.

\begin{definition}
Let $\clR$~be a~binary relation on~$\clC$ (or,
perhaps, on a~larger class $\clC'\supseteq\clC$).
The {\em $\clR$-satisfiability on $\clC$ is
expressible in}~$L$ iff there exists $f:L_s\to L_s$
such that for every sentence $\vf\in L_s$
and every model $\stA\in\clC$,
\begin{equation*}\label{eq:r-sat}
\stA\vDash f(\vf)
\;\Iff\;
\exists\,\stB\in\clC\;
(\stA\,\clR\,\stB\;\&\;\stB\vDash\vf).
\end{equation*}
\end{definition}

Some examples of such expressibility and
non-expressibility will be given below.

The following is straightforward:

\begin{proposition}
If $f$ and $f'$ both express the
$\mathcal R$-satisfiability on~$\clC$,
and $T=\Th(\clC)$, then
$\mathrm{MTh}(T,f)=\mathrm{MTh}(T,f').$
\end{proposition}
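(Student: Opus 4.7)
The plan is to reduce the statement to a single uniform equivalence: for every propositional valuation $\theta:\PV\to L_s$ and every modal formula~$\mvf$, the $L$-sentences obtained by extending $\theta$ via $f$ and via $f'$ agree on~$\clC$, i.e., their biconditional belongs to $T = \Th(\clC)$. Once that is in hand, the image of $\mvf$ under the $f$-extension lies in~$T$ iff the image under the $f'$-extension does, and since this holds for every base valuation~$\theta$, the equality $\mathrm{MTh}(T,f) = \mathrm{MTh}(T,f')$ follows at once from the definition of the modal theory.

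Write $\vlf{\cdot}_f$ and $\vlf{\cdot}_{f'}$ for the two extensions of a fixed base~$\theta$. I would prove the claimed equivalence by induction on~$\mvf$. The cases $\mvf=\bot$ and $\mvf=\mathsf{p}$ are trivial since the two extensions coincide on $\bot$ and on propositional variables; the step for $\mvf\to\mpsi$ is routine, since the extensions commute with $\to$ and the induction hypothesis propagates through Boolean combinations. The critical case is $\mvf = \Di\mpsi$, where $\vlf{\Di\mpsi}_f = f(\vlf{\mpsi}_f)$ while $\vlf{\Di\mpsi}_{f'} = f'(\vlf{\mpsi}_{f'})$. Fix $\stA\in\clC$. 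Applying the defining clause of expressibility to~$f$, we have $\stA\mo f(\vlf{\mpsi}_f)$ iff some $\stB\in\clC$ satisfies $\stA\,\clR\,\stB$ and $\stB\mo\vlf{\mpsi}_f$; by the induction hypothesis the inner condition is equivalent to $\stB\mo\vlf{\mpsi}_{f'}$, and a second use of expressibility, this time for~$f'$, rewrites the resulting existential statement as $\stA\mo f'(\vlf{\mpsi}_{f'})$. Hence $f(\vlf{\mpsi}_f)$ and $f'(\vlf{\mpsi}_{f'})$ agree on all of~$\clC$, closing the induction.

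There is essentially no obstacle here. The only delicate point worth flagging is that the defining clause of expressibility tests the inner sentence on \emph{arbitrary} $\clR$-successors $\stB\in\clC$, not merely on the point $\stA$ being evaluated, so the induction hypothesis must be formulated as equivalence over the whole class~$\clC$ (equivalently, as membership of the biconditional in~$T$) rather than as a bare membership statement about~$T$ at a single model. With that strengthened formulation the induction goes through without further work and the proposition follows.
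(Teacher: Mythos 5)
Your argument is correct: the induction on~$\mvf$, with the hypothesis stated as equivalence of $\vlf{\mvf}_f$ and $\vlf{\mvf}_{f'}$ over all of~$\clC$ (so that it can be applied at the $\clR$-successors~$\stB$ in the $\Di$-step), is exactly the ``straightforward'' argument the paper has in mind, as it leaves this proposition without an explicit proof. Nothing to add.
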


Thus, in this case, $\mathrm{MTh}(T,f)$
does not depend on the choice of~$f$.

\begin{definition}\label{def:exprMTH}
Let the $\mathcal R$-satisfiability be expressible
in $L$ on a~class~$\mathcal C$ of models.
The {\em modal theory of $(\mathcal C,\mathcal R)$
in~$L$}, denoted by $\MTh^L(\mathcal C,\mathcal R)$,
is the set of modal formulas $\mvf$ such that
$\clC\mo\vlf{\mvf}$ for every propositional valuation
in~$L$.
\end{definition}

In this section, we just write $\mathrm{MTh}(\clC,\clR)$
assuming $L$~is fixed.

Given a propositional valuation $\vl$ in $L$, consider
$\mM=(\clC,\clR,\theta)$, the enormous
``Kripke model'' with
$\theta(\mathsf p)=\{\stA\in\clC:\stA\mo\vlf{\mathsf p}\}$.
By a straightforward induction on a~modal formula~$\mvf$,
for every $\stA$ in $\clC$
$$
\stA\mo\vlf{\mvf}
\;\Iff\;
\mM,\stA\mo\mvf,
$$
where $\mo$ on the left-hand and on the right-hand side
of the equivalence denotes the satisfiability relation
in model theory and the truth relation of a~modal formula in a Kripke model, respectively.
It follows that
\begin{equation*}
\clC\mo\vlf{\mvf}
\;\Iff\;
\mM,\stA\mo\mvf\text{ for all }\stA\in\clC.
\end{equation*}
Let $\Mod(\psi)$ be the class of models of $\psi\in\L_s$,
and $\clC_\psi=\Mod(\psi)\cap \clC$.
Then  validity of $\vlf{\mvf}$ in $\clC$ for all propositional valuations in $L$
can be considered as validity of the modal formula $\mvf$ in an enormous
``general frame of models'' $(\clC,\clR,\clV)$ where
$\clV$ ``consists'' of $\clC_\psi$ with $\psi\in L_s$.

\begin{remark}\label{rem:claclaclass1}
The collection $\mathcal V$ looks like  a~``class of classes''.
In fact, however, $\mathcal V$~is a~usual class defined
by a~formula of set theory with two parameters:
if $\mathcal C$, $L_s$ are defined by formulas
$\varPhi$, $\varPhi'$, respectively, then
what we understand by~$\mathcal V$ is the class of pairs
$(\mathfrak A,\psi)$ defined by a~formula~$\varUpsilon$
that is constructed from $\varPhi$, $\varPhi'$
and expresses the satisfaction relation between models
in $\mathcal C$ and sentences in~$L_s$. Thus subclasses
$\mathcal C_\psi$ of $\clC$ playing the role of
``elements'' of~$\mathcal V$ are in fact defined by
$\varUpsilon$ with a~fixed second argument~$\psi$.
This allows us to formally extend the definition of
validity in a~(set) frame to the case of the ``frame of models''
 $(\clC,\clR,\clV)$.
\end{remark}
It follows that the modal logic $\MLog(\mathcal C,\mathcal R,\mathcal V)$, i.e.,
 the set of all modal formulas that are valid in $(\mathcal C,\mathcal R,\mathcal V)$,  coincides with the modal theory
of  $(\mathcal C,\mathcal R)$ in~$L$.
Namely, we have

\begin{theorem}\label{thm:big-as-general}
If the $\mathcal R$-satisfiability on~$\mathcal C$
is expressible in~$L$, then
\begin{equation}\label{eq:big-as-general}
\mathrm{MTh}(\mathcal C,\mathcal R)=
\MLog(\mathcal C,\mathcal R,\mathcal V).
\end{equation}
\end{theorem}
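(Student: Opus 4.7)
The plan is to make the informal argument already sketched in the paragraphs preceding the theorem fully rigorous, by exhibiting a bijective correspondence between propositional valuations in $L$ and valuations on the general frame $(\mathcal{C},\mathcal{R},\mathcal{V})$ under which the two notions of validity coincide.

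First I would fix a propositional valuation $\vl$ in $L$ and, as the text already does, form the Kripke model $\mM=(\clC,\clR,\theta)$ with $\theta(\mathsf p)=\clC_{\vlf{\mathsf p}}=\{\stA\in\clC:\stA\mo\vlf{\mathsf p}\}$. The key lemma is the equivalence
$$\stA\mo\vlf{\mvf}\;\Iff\;\mM,\stA\mo\mvf,$$
which I would prove by induction on $\mvf$. The atomic and Boolean cases are immediate from the definition of $\vl$ on Boolean combinations and from the clauses for $\theta$. The crucial step is the case $\mvf=\Di\mpsi$: by the definition of $\vl$, $\vlf{\Di\mpsi}=f(\vlf{\mpsi})$, and by expressibility of $\clR$-satisfiability, $\stA\mo f(\vlf{\mpsi})$ iff there exists $\stB\in\clC$ with $\stA\,\clR\,\stB$ and $\stB\mo\vlf{\mpsi}$. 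Applying the induction hypothesis to $\stB$ turns this into the standard truth condition for $\Di$ in $\mM$.

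From this equivalence it follows directly that $\clC\mo\vlf{\mvf}$ iff $\mvf$ is true at every point of $\mM$, i.e.\ iff $\mvf$ is true in $\mM$. So to prove \eqref{eq:big-as-general} it remains to check that, as $\vl$ ranges over propositional valuations in $L$, the models $\mM=(\clC,\clR,\theta)$ range over precisely the models on $(\clC,\clR,\clV)$. One direction has just been given; for the other, let $\theta$ be any valuation on the frame. By definition of $\clV$, every $\theta(\mathsf p)$ has the form $\clC_\psi$ for some sentence $\psi\in L_s$; choose such a $\psi$ for each variable and set $\vlf{\mathsf p}=\psi$. The resulting propositional valuation induces exactly $\theta$. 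Putting the two directions together yields
$$\mvf\in\mathrm{MTh}(\clC,\clR)\iff \forall\vl\;\clC\mo\vlf{\mvf}\iff \forall\theta\;(\clC,\clR,\clV),\theta\mo\mvf\iff\mvf\in\MLog(\clC,\clR,\clV),$$
which is the desired equality.

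The only genuine subtlety is the one flagged in Remark~\ref{rem:claclaclass1}: $\clV$ is not literally a set but a class of classes, so the phrase ``valuation on $(\clC,\clR,\clV)$'' and the associated notion of validity have to be read through the coding $\varUpsilon$ described there. I expect this to be the main point requiring care, since once the definitions are set up so that membership in $\clV$ is witnessed by a sentence $\psi\in L_s$ with $\clC_\psi=\theta(\mathsf p)$, the bijection between propositional and frame valuations is immediate and the proof reduces to the induction above. No further set-theoretic machinery beyond what the remark already isolates should be needed.
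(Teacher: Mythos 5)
Your proposal is correct and follows essentially the same route as the paper, which proves this theorem via exactly the induction you describe (the displayed equivalence $\stA\mo\vlf{\mvf}\Iff\mM,\stA\mo\mvf$ in the paragraphs preceding the statement) together with the observation that propositional valuations in $L$ and frame valuations into $\clV$ induce one another. The only cosmetic quibble is that the correspondence between valuations is surjective in each direction rather than bijective (distinct sentences can define the same class $\clC_\psi$), but your argument only uses what is actually true.
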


Consequently, $\MTh(\mathcal C,\mathcal R)$ is a~normal logic.

\hide{
\begin{remark}\label{rem:claclaclass1-old}
The family  $\clV$ looks  like  a ``class of classes''.
However, $\clV$~can be  considered as a~usual class defined by a formula of set theory with two parameters:  if
$\mathcal C=\{x:\varPhi(x)\}$,
$L_s=\{y:\varPsi(y)\}$, and $\varUpsilon(x,y)$ defines the $L$-satisfaction relation,  then what we understand by~$\mathcal V$ in our setting
is the  class
$\bigl\{
(x,y):
\varPhi(x)\;\&\;
\varPsi(y)\;\&\;
\varUpsilon(x,y))
\bigr\}.
$
\end{remark}
}

\begin{proposition}\label{prop:extendingexpr}
Assume that $f$  expresses the $\clR$-satisfiability
on $\clC$ in $L$.
\begin{enumerate}
\item
If $\clD\subseteq \clC$ is $\clR$-upward closed
(i.e.,
$\stA\in \clD\;\&\;\stA\,\clR\,\stB\;\&\;\stB\in\clC$
implies $\stB\in\clD$),
then the $\clR$-satisfiability on~$\clD$
is expressible in $L$ by~$f$.
\item
If $\psi$ is a~sentence of~$L$, then the
$\clR$-satisfiability on $\clC\cap\Mod(\psi)$ is
expressible in $L$ by $g:\vf\mapsto f(\vf\wedge\psi)$.
\end{enumerate}
\end{proposition}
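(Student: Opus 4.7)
The plan is to verify both claims by direct unfolding of the definition of expressibility, using in each case the hypothesis ($\mathcal{R}$-upward closure for (1), the fact that $\psi$ is a sentence of $L$ for (2)) to convert the existential witness from $\mathcal{C}$ into a witness in the restricted class.

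For part (1), I would fix a sentence $\varphi \in L_s$ and a model $\mathfrak{A} \in \mathcal{D}$. Since $\mathfrak{A} \in \mathcal{C}$ as well, the expressibility of $\mathcal{R}$-satisfiability on $\mathcal{C}$ by $f$ gives
\begin{equation*}
\mathfrak{A} \vDash f(\varphi) \;\Iff\; \exists\,\mathfrak{B} \in \mathcal{C}\;(\mathfrak{A}\,\mathcal{R}\,\mathfrak{B}\;\&\;\mathfrak{B}\vDash\varphi).
\end{equation*}
The upward closure hypothesis says that any such $\mathfrak{B}$ automatically lies in $\mathcal{D}$, so the right-hand side is equivalent to the same existential with $\mathcal{C}$ replaced by $\mathcal{D}$. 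This is exactly what is needed for $f$ to express $\mathcal{R}$-satisfiability on $\mathcal{D}$.

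For part (2), let $\varphi \in L_s$ and $\mathfrak{A} \in \mathcal{C} \cap \Mod(\psi)$. Since $\psi$ is a sentence of $L$, so is $\varphi \wedge \psi$ (by condition~(ii) on model-theoretic languages, $L$ is closed under finitary Boolean connectives). Applying expressibility of $\mathcal{R}$-satisfiability on $\mathcal{C}$ by $f$ to the sentence $\varphi \wedge \psi$ yields
\begin{equation*}
\mathfrak{A} \vDash g(\varphi) \;\Iff\; \exists\,\mathfrak{B}\in\mathcal{C}\;(\mathfrak{A}\,\mathcal{R}\,\mathfrak{B}\;\&\;\mathfrak{B}\vDash\varphi\wedge\psi).
\end{equation*}
Since $\mathfrak{B} \vDash \varphi \wedge \psi$ iff $\mathfrak{B} \vDash \varphi$ and $\mathfrak{B} \in \Mod(\psi)$, the right-hand side coincides with $\exists\,\mathfrak{B}\in\mathcal{C}\cap\Mod(\psi)\;(\mathfrak{A}\,\mathcal{R}\,\mathfrak{B}\;\&\;\mathfrak{B}\vDash\varphi)$, which is the desired condition for $g$.

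There is no real obstacle: both items are immediate from the definition, and the only small point to flag is the appeal to (ii) in part~(2) to ensure $\varphi \wedge \psi$ is again a sentence of $L$, so that $f$ is defined on it.
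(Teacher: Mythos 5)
Your proof is correct and is exactly the routine unfolding of the definitions that the paper has in mind: the proposition is stated there without proof, as an immediate consequence of the definition of expressibility, and both of your case analyses (upward closure converting the witness in part (1), and the equivalence $\stB\vDash\vf\wedge\psi$ iff $\stB\vDash\vf$ and $\stB\in\Mod(\psi)$ in part (2)) are the intended argument. The only cosmetic point is that closure of $L_s$ under finite conjunction is part of the Barwise--Feferman definition of a model-theoretic language that the paper cites, rather than a literal consequence of condition (ii) as stated; this changes nothing.
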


Let $\clR^*(\stA)$ denote $\bigcup_{n<\omega} \clR^n(\stA)$,
the least $\clR$-closed $\clD\subseteq\clC$ containing~$\stA$.
From (\ref{eq:big-as-general})
and the generated subframe construction
(see, e.g., \cite[Section 8.5]{CZ}),
we obtain

\begin{corollary}\label{cor:gener}
If the $\clR$-satisfiability on $\clC$
is expressible in~$L$, then
\begin{center}
$
\MTh(\clC,\clR)\,=\,
\bigcap\limits_{\mathfrak A\in\clC}
\MTh(\clR^*(\mathfrak A),\clR).
$
\end{center}
\end{corollary}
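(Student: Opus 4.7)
The plan is to derive the corollary from Theorem~\ref{thm:big-as-general} and Proposition~\ref{prop:extendingexpr}(1), reducing the claim to a standard generated-subframe principle. The first step is to observe that for each $\stA\in\clC$, the class $\clR^*(\stA)=\bigcup_{n<\omega}\clR^n(\stA)$ is $\clR$-upward closed by construction: if $\stB\in\clR^n(\stA)$ and $\stB\,\clR\,\stC$ with $\stC\in\clC$, then $\stC\in\clR^{n+1}(\stA)\subseteq\clR^*(\stA)$. Hence Proposition~\ref{prop:extendingexpr}(1) applies, and the same operation $f$ that witnesses expressibility on~$\clC$ also witnesses it on $\clR^*(\stA)$. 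In particular, $\MTh(\clR^*(\stA),\clR)$ is well-defined, and for every modal formula~$\mvf$ and every valuation~$\vl$ in~$L$ the sentence $\vlf{\mvf}$ is computed identically whether the ambient class is $\clC$ or $\clR^*(\stA)$.

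With this in place, both inclusions are immediate. For $\subseteq$: if $\mvf\in\MTh(\clC,\clR)$ and $\vl$ is any propositional valuation in~$L$, then $\stB\mo\vlf{\mvf}$ for every $\stB\in\clC$; restricting to the subclass $\clR^*(\stA)\subseteq\clC$ gives $\mvf\in\MTh(\clR^*(\stA),\clR)$. For $\supseteq$: if $\mvf$ lies in the intersection and $\stA\in\clC$, then by definition $\stA\in\clR^*(\stA)$, so $\mvf\in\MTh(\clR^*(\stA),\clR)$ yields $\stA\mo\vlf{\mvf}$ for every valuation, which is precisely what membership in $\MTh(\clC,\clR)$ requires.

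Alternatively, one can phrase the argument entirely in frame-theoretic terms. By Theorem~\ref{thm:big-as-general}, $\MTh(\clC,\clR)$ equals the logic of the ``frame of models'' $(\clC,\clR,\clV)$, and the classes $\clR^*(\stA)$ are precisely the carriers of its point-generated subframes. The identity then becomes the standard fact that the logic of a general frame coincides with the intersection of the logics of its point-generated subframes (cf.\ \cite[Section~8.5]{CZ}). No genuine obstacle arises; the only point needing care is that $\clC$ is a proper class, but this is handled exactly as in Remark~\ref{rem:claclaclass1} — each $\clR^*(\stA)$ is a definable subclass, and the associated ``subframe'' is interpreted in the same extended sense as the ambient frame.
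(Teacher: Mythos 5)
Your proposal is correct and matches the paper's proof: the paper derives the corollary in one line from Theorem~\ref{thm:big-as-general} together with the generated-subframe construction of \cite[Section~8.5]{CZ}, which is exactly your second, frame-theoretic paragraph. Your first paragraph merely unfolds that argument at the level of sentences, and usefully makes explicit the point the paper leaves implicit (by placing Proposition~\ref{prop:extendingexpr} just before the corollary), namely that $\clR^*(\stA)$ is $\clR$-upward closed so the same $f$ witnesses expressibility there and $\MTh(\clR^*(\stA),\clR)$ is well-defined.
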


\paragraph{Frames of theories}
Assume that the $\clR$-satisfiability is
expressible on $\clC$ in $L$ by some $f:L_s\to L_s$.

We have observed in Theorem~\ref{thm:big-as-general}
that $\MTh(\mathcal C,\mathcal R)$ can be viewed
as the modal logic of a~general frame of models.
Now we provide other semantic characterizations
of $\MTh(\mathcal C,\mathcal R)$.

Put $\Ths=\{\Th(\stA): \stA\in\clC\}$. Note that $\Ths$ is a set since $L$ is assumed to be a set.
For theories $T_1,T_2 \in \Ths$, let
\begin{eqnarray*}
T_1\,\clR_{\Ths}\,T_2
&\tiff&
\EE\,\stA_1,\stA_2\in\clC\,\;
(\stA_1\mo T_1\;\&\;\stA_2\mo T_2\;\&\;\stA_1\clR\,\stA_2),
\\
T_1\,\Rmax\,T_2
&\tiff&
\AA\vf\in T_2\;\,f(\vf)\in T_1.
\end{eqnarray*}
Observe that $\Rmax$~does not depend on the choice
of~$f$, and $\clR_{\Ths}\subseteq\Rmax$.

For $\vf\in L_s$, put
$\Ths_\vf=\{T\in\Ths:\vf\in T\}$.
Clearly,
$\Ths_{\vf\con\psi}=\Ths_{\vf}\cap\Ths_{\psi}$
and $\Ths_{\neg\,\vf}=\Ths\setminus\Ths_{\vf}$.
Consider an arbitrary binary relation $\filtR$ such that
$$\clR_{\Ths}\subseteq\filtR\subseteq\Rmax.$$
It follows from the definitions that
$\filtR^{-1}(\Ths_\vf)=\Ths_{f(\vf)}$.
Therefore, $$\clA=\{\Ths_{\vf}:\vf\in L_s\}$$ is
a~subalgebra of the powerset algebra $\clP(\Ths)$
closed under $\filtR^{-1}$, and $(\Ths,\filtR,\clA)$
is a~general frame. We call $(\Ths,\clR_{\Ths},\clA)$
and $(\Ths,\Rmax,\clA)$
the {\em minimal} and the {\em maximal frames
of theories for $\clC,\clR$, and~$L$}.

\begin{remark}
The relation $\filtR$ can be viewed as
a~{\em filtration  of $\clR$}
(see, e.g., \cite[Section 4]{Goldblatt92}).
The frame $(\Ths,\Rmax,\clA)$ is known as
the {\em refinement} of $(\clC,\clR,\mathcal V)$
(cf.~\cite[Chapter 8]{CZ}).
\end{remark}

For sentences $\vf,\psi$, put $\vf\approx\psi$
iff $\clC\mo\vf\lra \psi$.
Let $L/{\approx}$ be the Lindenbaum algebra of
the theory $\Th(\clC)$, i.e., the set~$L_s$ of sentences
of $L$ modulo~$\approx$ with operations induced by
Boolean connectives. We have $f(\vf)\approx f(\psi)$
whenever $\vf\approx\psi$, hence, $f$~induces
the operation~$f_\approx$ on $L/{\approx}$.
It is easy to see that $(L/{\approx},f_\approx)$
is a~modal algebra. It is called the
{\em modal (Lindenbaum) algebra of the language~$L$
on $(\clC,\clR)$}.

The above arguments yield

\begin{theorem}\label{thm:minmax-ths}
If $\clR_{\Ths}\subseteq \filtR\subseteq\Rmax$\,,
then the modal algebras $(L/{\approx},f_\approx)$
and $(\clA,\filtR^{-1})$ are isomorphic.
\end{theorem}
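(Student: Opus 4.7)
The plan is to introduce the obvious candidate
\[
\eta : L/{\approx} \to \clA, \qquad [\vf] \mapsto \Ths_\vf,
\]
and to check that it is a Boolean isomorphism that intertwines $f_\approx$ with $\filtR^{-1}$. Well-definedness and injectivity both reduce to the equivalence $\vf \approx \psi \Iff \Ths_\vf = \Ths_\psi$: unfolding, $\vf \approx \psi$ says exactly that $\stA \mo \vf$ and $\stA \mo \psi$ agree for every $\stA \in \clC$, and since each $T \in \Ths$ has the form $\Th(\stA)$ for some $\stA \in \clC$, this is the same as $\vf \in T \Iff \psi \in T$ for every $T \in \Ths$. Surjectivity is by the very definition of $\clA$, and the Boolean identities $\Ths_{\vf \wedge \psi} = \Ths_\vf \cap \Ths_\psi$ and $\Ths_{\neg\,\vf} = \Ths \setminus \Ths_\vf$ (already recorded before the statement) give that $\eta$ is a Boolean homomorphism.

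The only nontrivial point, and the one where the sandwich hypothesis $\clR_\Ths \subseteq \filtR \subseteq \Rmax$ is actually used, is the commutation with the modal operators, i.e.\ the identity $\Ths_{f(\vf)} = \filtR^{-1}(\Ths_\vf)$. For the inclusion ``$\supseteq$,'' suppose $T \filtR T'$ with $\vf \in T'$; then $\filtR \subseteq \Rmax$ immediately gives $f(\vf) \in T$. For ``$\subseteq$,'' suppose $f(\vf) \in T$ and pick $\stA \in \clC$ with $T = \Th(\stA)$; by expressibility of the $\clR$-satisfiability in $L$, there is some $\stB \in \clC$ with $\stA\,\clR\,\stB$ and $\stB \mo \vf$, and then by definition $T \,\clR_\Ths\, \Th(\stB)$, so that $T \filtR \Th(\stB)$ by the other inclusion $\clR_\Ths \subseteq \filtR$, witnessing $T \in \filtR^{-1}(\Ths_\vf)$.

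If there is a ``hard part,'' it is conceptual rather than calculational: one must notice that the two sandwiching inclusions are used in opposite directions, so that the preimage $\filtR^{-1}(\Ths_\vf)$ is pinned down to $\Ths_{f(\vf)}$ no matter which $\filtR$ is chosen in the admissible interval. This is what makes both the algebra $(\clA,\filtR^{-1})$ and its isomorphism type independent of $\filtR$, which is really the content of the theorem; everything else is bookkeeping.
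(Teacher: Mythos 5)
Your proposal is correct and follows exactly the route the paper takes: the paper's one-line proof sends $[\vf]_\approx$ to $\Ths_\vf$, relying on the identity $\filtR^{-1}(\Ths_\vf)=\Ths_{f(\vf)}$ asserted ``from the definitions'' in the paragraph preceding the theorem, and your verification of that identity (using $\filtR\subseteq\Rmax$ for one inclusion and $\clR_\Ths\subseteq\filtR$ together with expressibility for the other) is precisely the detail the paper leaves implicit. No discrepancies.
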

\begin{proof}
The isomorphism takes the $\approx$-class
of a~sentence~$\vf$ to~$\Ths_{\vf}$.
\end{proof}

\begin{theorem}\label{thm:maintoolnew}
Assume that the $\clR$-satisfiability on $\clC$
is expressible in~$L$. Let
$\clR_{\Ths}\subseteq \filtR \subseteq \Rmax$.
Then
$$
\MTh(\clC,\clR)=
\MLog(\Ths,\filtR,\clA)=
\MLog(L/{\approx},f_\approx).
$$
\end{theorem}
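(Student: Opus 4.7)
The plan is to dispatch the two equalities separately, using quite different tools for each.

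For the equality $\MLog(\Ths,\filtR,\clA) = \MLog(L/{\approx},f_\approx)$, I would simply read it off from Theorem~\ref{thm:minmax-ths}. By the definitions recalled in the preliminaries, the modal algebra of the general frame $(\Ths,\filtR,\clA)$ is exactly $(\clA,\filtR^{-1})$, and a modal formula is valid in a frame iff it is valid in the associated modal algebra. Theorem~\ref{thm:minmax-ths} identifies $(\clA,\filtR^{-1})$ with $(L/{\approx},f_\approx)$ up to isomorphism of modal algebras, and isomorphic modal algebras validate the same formulas.

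For the first equality, $\MTh(\clC,\clR) = \MLog(\Ths,\filtR,\clA)$, I would prove a truth lemma by induction on modal formulas. To each propositional valuation $\vl$ in $L$ associate the valuation $\theta_\vl(\mathsf p) = \Ths_{\vlf{\mathsf p}}$ in $\clA$; conversely, every valuation in $\clA$ arises in this way, since each member of $\clA$ is by construction $\Ths_\psi$ for some $\psi \in L_s$. I would then show that for each modal formula $\mvf$ and each $T \in \Ths$,
$$
(\Ths,\filtR,\theta_\vl), T \mo \mvf \;\;\text{iff}\;\; \vlf{\mvf} \in T.
$$
The atomic case is by definition; the Boolean cases reduce to $\Ths_{\vf\con\psi} = \Ths_\vf \cap \Ths_\psi$ and $\Ths_{\neg\vf} = \Ths \setminus \Ths_\vf$. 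The crux is the $\Di$-step, where the sandwich $\clR_\Ths \subseteq \filtR \subseteq \Rmax$ is used once in each direction: starting from $f(\vlf{\mvf}) \in T = \Th(\stA)$, expressibility produces $\stB \in \clC$ with $\stA\,\clR\,\stB$ and $\stB \mo \vlf{\mvf}$, whence $T\,\clR_\Ths\,\Th(\stB)$ and the inclusion $\clR_\Ths \subseteq \filtR$ yields a $\filtR$-successor of $T$ forcing $\mvf$ by the inductive hypothesis; conversely, if $T \filtR T'$ with $\vlf{\mvf} \in T'$, then $T \Rmax T'$ by $\filtR \subseteq \Rmax$, and the very definition of $\Rmax$ forces $f(\vlf{\mvf}) \in T$.

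With the truth lemma in hand, $\mvf \in \MTh(\clC,\clR)$ unwinds to $\vlf{\mvf}$ lying in every $T \in \Ths$ for every $\vl$, which by the lemma and the surjective correspondence $\vl \mapsto \theta_\vl$ is exactly the validity of $\mvf$ in $(\Ths,\filtR,\clA)$. I do not foresee any serious obstacle; the one point requiring care is keeping the two inclusions of the sandwich on the correct sides of the $\Di$-step, and verifying that the equivalence between validity in $\MTh$ and in the frame of theories does not depend on which particular $\filtR$ was chosen in the allowed range.
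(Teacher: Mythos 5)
Your proposal is correct and follows essentially the same route as the paper: the paper also establishes the first equality via an induction showing $\stA\mo\vlf{\mvf}$ iff $(\frF,\theta),\Th(\stA)\mo\mvf$ (which is exactly your truth lemma, since $\vlf{\mvf}\in\Th(\stA)$ iff $\stA\mo\vlf{\mvf}$), using the valuation correspondence $\vl\leftrightarrow\theta$, and derives the second equality directly from Theorem~\ref{thm:minmax-ths}. You have merely written out the $\Di$-step of the induction, where the two inclusions of the sandwich $\clR_{\Ths}\subseteq\filtR\subseteq\Rmax$ are used exactly as you describe, which the paper leaves implicit.
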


\begin{proof}
Every valuation $\vl$  in $L$ can be viewed as
a~valuation~$\theta$ in the frame $\frF=(\Ths,\filtR,\clA)$,
and vice versa. By induction on~$\mvf$,
for every $\stA$ in~$\clC$ we have\,
$
\stA\mo\vlf{\mvf}\tiff
(\frF,\theta){,}\Th(\stA)\mo\mvf.
$
Thus $\MTh(\clC,\clR)=\MLog(\Ths,\filtR,\clA)$.

The second equality immediately follows from
Theorem~\ref{thm:minmax-ths}.
\end{proof}

\begin{corollary}\label{prop: finite frame}
If $\Ths$ is finite, then
$\mathrm{MTh}(\mathcal C,\mathcal R)$ is the logic
of the Kripke frame $(\Ths,\clR_{\Ths})$.
\end{corollary}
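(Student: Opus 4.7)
The plan is to deduce the corollary directly from Theorem \ref{thm:maintoolnew} by showing that, under the finiteness hypothesis, the admissible set algebra $\clA = \{\Ths_\vf : \vf \in L_s\}$ coincides with the full powerset $\clP(\Ths)$, so that the general frame $(\Ths, \clR_\Ths, \clA)$ is just the Kripke frame $(\Ths, \clR_\Ths)$.

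The key step is to verify that every singleton $\{T\} \subseteq \Ths$ lies in~$\clA$. Since each $T \in \Ths$ is the $L$-theory of some model $\stA \in \clC$, and since model-theoretic satisfaction is classical, $T$ is complete: for every $L$-sentence $\vf$ exactly one of $\vf, \neg\vf$ belongs to $T$. Hence for any two distinct $T, T' \in \Ths$ one can pick $\vf_{T,T'} \in T \setminus T'$. Using that $\Ths$ is finite and that $L$ contains $L_{\omega,\omega}$, I would form the finite conjunction $\vf_T = \bigwedge_{T' \in \Ths \setminus \{T\}} \vf_{T,T'} \in L_s$, which belongs to $T$ but to no other element of $\Ths$. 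Then $\Ths_{\vf_T} = \{T\}$, showing $\{T\} \in \clA$.

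Because $\clA$ is a Boolean subalgebra of $\clP(\Ths)$ containing every singleton of the finite set $\Ths$, it is closed under finite unions and therefore equals $\clP(\Ths)$. Consequently the general frame $(\Ths, \clR_\Ths, \clA)$ has the same valid modal formulas as the Kripke frame $(\Ths, \clR_\Ths)$, i.e.,
\[
\MLog(\Ths, \clR_\Ths, \clA) \,=\, \MLog(\Ths, \clR_\Ths).
\]
Applying Theorem \ref{thm:maintoolnew} with $\filtR = \clR_\Ths$ yields $\MTh(\clC, \clR) = \MLog(\Ths, \clR_\Ths)$, as required.

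There is no real obstacle here; the only subtle point is to remember that elements of $\Ths$ are complete theories and that $L \supseteq L_{\omega,\omega}$ gives us finite Boolean combinations of sentences, which is exactly what is needed to separate any two distinct theories and hence to identify $\clA$ with the full powerset in the finite case.
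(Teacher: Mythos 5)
Your proof is correct and follows exactly the paper's route: the paper's entire proof is the observation that $\clA=\clP(\Ths)$ in the finite case, which you spell out by separating complete theories with finite conjunctions and then invoking Theorem~\ref{thm:maintoolnew} with $\filtR=\clR_{\Ths}$.
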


\begin{proof}
In this case $\clA=\clP(\Ths)$.
\end{proof}

The family $\Ths$ of complete theories can be viewed
as the quotient of $\clC$ by the $L$-equivalence~$\equiv$
where $\stA\equiv \stB$ iff $\Th(\stA)=\Th(\stB)$.
Theorem~\ref{thm:maintoolnew} can be generalized
for the case of any equivalence~$\sim$ on~$\clC$
finer than~$\equiv$\,; in particular, it holds
for the isomorphism equivalence $\isom$ on models, or for
the equivalence in a~stronger model-theoretic language.
Namely, we let:
\begin{eqnarray*}
\,[\stA]_\sim\clR_\sim[\stB]_\sim
&\,\Iff\,&
\exists\,\mathfrak A'\sim\mathfrak A\;
\exists\,\mathfrak B'\sim\mathfrak B\;\,
\mathfrak A'\,\mathcal R\,\mathfrak B',
\\
\,[\stA]_\sim\Rsimmax[\stB]_\sim
&\Iff&
\forall\,\varphi\in L_s\;\,
(\mathfrak B\vDash\varphi\,\Rightarrow\,
\mathfrak A\vDash f(\varphi));
\end{eqnarray*}
the algebra of valuations is defined as
the collection~$\clV_\sim$ ``consisting''
of classes~$\clC_\psi/{\sim}$ for $\psi\in L_s$.
(Again, since $[\mathfrak A]_\sim$ are in general
proper classes, $\clV_\sim$~looks like
a~``class of classes of classes'' but actually is
nothing but a~three-parameter formula; cf.~Remarks
\ref{rem:claclaclass1} and~\ref{rem:claclaclassGeneral}.)
As in the proof of~Theorem \ref{thm:maintoolnew},
for $\clR_\sim\subseteq\filtR\subseteq \Rsimmax$
one can obtain that
\begin{equation}\label{eq:quot}
\MTh(\clC,\clR)=\MLog(\clC/{\sim},\filtR,\clV_\sim).
\end{equation}

To the best of our knowledge,
Theorem~\ref{thm:maintoolnew}
(or its analogue~(\ref{eq:quot})) has never been
formulated  explicitly before,
although similar constructions related to
frames of arithmetic theories were considered earlier
(V.\,Yu.~Shavrukov, an unpublished note, 2013;
\cite[Remark 3]{Henk2015}).

\smallskip
We conclude this section with a continuation
of Remark~\ref{rem:claclaclass1}.

\begin{remark}\label{rem:claclaclassGeneral}
\marISH{new}
It is possible to give a~general definition of frames
that are classes and their logics (in our metatheory
which is assumed to be $\ZFC$ where ``classes'' are
shorthands for formulas). We outline the idea and
postpone details for a~further paper.
Below capital Greek letters $\varPhi,\varPsi,\ldots$
denote formulas of the metatheory.

Assume that $\mathcal C$, $\mathcal L$, $\mathcal R$
are (arbitrary) classes defined by formulas
$\varPhi$, $\varPhi'$, $\varPsi$, respectively:
$\mathcal C=\{x:\varPhi(x)\}$,
$\mathcal L=\{y:\varPhi'(y)\}$, and
$\mathcal R=\{(x,v):\varPsi(x,v)\}$.
Let us say that
a~class~$\mathcal V$ forms a~\emph{class modal algebra}
(admissible for $\mathcal C$, $\mathcal L$,~$\mathcal R$)
iff it is defined by a~formula~$\varUpsilon$  that
fulfills the conditions expressing
that classes
$\mathcal C_y=\{x:\varUpsilon(x,y)\}$ indexed by $y$
in~$\mathcal L$ play the role of ``elements''
of~$\mathcal V$. Namely, $\varUpsilon$ implies that
all $\clC_y$ are subclasses of~$\mathcal C$
and their collection is closed under Boolean operations
and the modal operator given by~$\mathcal R$;
e.g., the latter is expressed as follows:
$$
\forall y\,\exists z\,\forall x\;
(\varUpsilon(x,z)\:\&\:\varPhi'(y)
\;\Leftrightarrow\;
\varPhi(x)\:\&\:\exists v\:
(\varPsi(x,v)\:\&\:\varUpsilon(v,y))).
$$

Further, let us say that $\theta$~is a~\emph{valuation}
of propositional variables in~$\mathcal V$ iff $\theta$~is
a~(set) function with $\dom(\theta)=\PV$ and
$\ran(\theta)\subseteq\mathcal L$, and that
$\mathfrak F=(\mathcal C,\mathcal R,\mathcal V)$
is a~\emph{class general frame} and
$\mathfrak M=(\mathcal C,\mathcal R,\mathcal V,\theta)$
a~\emph{class Kripke model} on~$\mathfrak F$. To define
the \emph{truth} of modal formulas~$\upvarphi$ at
a~point $x\in\mathcal C$ in the model~$\mathfrak M$,
denoted by $\mathfrak M,x\vDash\upvarphi,$
we first extend $\theta$ to a~suitable~$\bar\theta$
with $\dom(\bar\theta)=\MF$ and
$\ran(\theta)\subseteq\mathcal L$. It can be shown
that such an extension exists and is unique up to
the equivalence~$\sim$ on~$\mathcal L$ defined
by letting $y\sim z$ iff
$
\forall x\,
(\varUpsilon(x,y)\Leftrightarrow\varUpsilon(x,z)).
$
Then we let
\begin{align*}
\mathfrak M,x\vDash\upvarphi
\;\Leftrightarrow\;
\varUpsilon(x,\bar\theta(\upvarphi)).
\end{align*}
This notion of truth has the expected properties;
e.g., we have
\begin{align*}
\mathfrak M,x\vDash\Diamond\upvarphi
&\;\Leftrightarrow\;
\exists v\:(\varPsi(x,v)\:\&\:
\mathfrak M,v\vDash\upvarphi).
\end{align*}
A~formula $\upvarphi\in\MF$ is
\emph{true} in~$\mathfrak M$ iff
$
\forall x\,(\varPhi(x)\Rightarrow
\mM,x
\vDash\upvarphi),
$
and \emph{valid} in~$\frF$ iff it is true
in all models~$\mathfrak M$ on~$\frF$.
The \emph{modal logic $\MLog(\frF)$ of the class
frame~$\frF$} consists of those $\upvarphi\in\MF$ that
are valid in~$\frF$. It can be verified that this logic
is a~set defined by a~$\ZFC$-formula constructed from
the formulas $\varPhi,\varPhi',\varPsi,\varUpsilon$,
and it is normal.
%

By using formulas with additional parameters, one
can imitate higher order class algebras and their
modal logics.
\hide{
E.g., for $\varUpsilon(x,y,z)$,
classes $\mathcal C_{y,z}=\{x:\varUpsilon(x,y,z)\}$
play the role of ``elements'' of the ``superclass''
$\mathcal C_{y}=\{x:\exists z\,\varUpsilon(x,y,z)\}$,
which in turn plays the role of an ``element''
of~$\mathcal V$.\marISH{Consolidate}
}
\end{remark}

\section{Logics of submodels}\label{sec:sub}
In this part, we apply Theorem \ref{thm:maintoolnew} to calculate
the modal theory of the {\it submodel relation\/} on the
class of all models of a given signature.

\paragraph{Expressing the satisfiability}\label{subsec:relativ}
Given models $\mathfrak A$ and
$\mathfrak B$ of a~signature~$\Omega$, let
$\mathfrak A\supmod\mathfrak B$ mean
``$\mathfrak A$~contains $\mathfrak B$ as a~submodel''.
As the initial step, we find a~model-theoretic
language for expressing the
$\sqsupseteq$-satisfiability.
Observe first that
  first-order languages are generally too weak for this.

\begin{proposition}\label{prop:submod-non-exprr}
If $\Omega$~contains a~predicate symbol of arity~$\ge2$,
then the $\sqsupseteq$-satisfiability is not
expressible in~$L_{\omega,\omega}$ and moreover,
in the infinitary language~$L_{\infty,\omega}$.
\end{proposition}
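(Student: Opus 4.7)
The plan is to exhibit a first-order sentence $\varphi \in L_{\omega,\omega}$ such that the class $K_\varphi := \{\stA\in\clC : \EE\,\stB\ (\stA\supmod\stB \con \stB\mo\varphi)\}$ fails to be axiomatizable in $L_{\infty,\omega}$. Since $L_{\omega,\omega} \sbq L_{\infty,\omega}$, the non-expressibility claim for $L_{\omega,\omega}$ follows from the one for $L_{\infty,\omega}$, so both assertions are subsumed by a single counterexample.

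I would take $R$ to be a binary predicate from $\Omega$ and let $\varphi$ be the first-order sentence asserting that $R$ is a strict linear order without least element, i.e., the conjunction of irreflexivity, transitivity, and totality of $R$ together with $\AA x\,\EE y\,R(y,x)$. A submodel $\stB\submod\stA$ satisfies $\varphi$ precisely when the restriction of $R^\stA$ to the domain of $\stB$ is a strict linear order with no minimum. Restricting attention to the subclass $\clC_{\mathrm{LO}}\sbq\clC$ consisting of those $\stA$ whose $R$-reduct is itself a strict linear order (other symbols of $\Omega$, if any, being interpreted in some fixed trivial way), the linear-order axioms transfer to every submodel automatically; hence, for $\stA\in\clC_{\mathrm{LO}}$, $\stA\in K_\varphi$ iff $\stA$ contains an infinite strictly $R$-descending sequence, i.e.\ iff $\stA$ is not well-founded as a linear order.

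Suppose, for contradiction, that some $\psi\in L_{\infty,\omega}$ expresses $\sqsupseteq$-satisfiability for $\varphi$. Then, restricted to $\clC_{\mathrm{LO}}$, $\psi$ axiomatizes the class of non-well-founded linear orders. This contradicts the classical result of infinitary model theory that well-foundedness of linear orders is not $L_{\infty,\omega}$-axiomatizable: for any putative $\psi$ of Scott rank $\alpha$, one can construct a well-founded linear order of length far exceeding $\alpha$ together with a non-well-founded variant obtained by attaching above a sufficiently high initial segment a ``tail'' (for example, $\mathbb Z$-chains indexed by the rationals) that is invisible to partial isomorphisms of depth $\leq\alpha$, and a back-and-forth argument then shows the two orders agree on $\psi$ while differing in well-foundedness.

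The main technical obstacle is precisely this back-and-forth, which requires careful matching of moves descending into the non-well-founded tail with appropriate counterparts in the bare well-order at each level below the Scott rank of $\psi$. Once such a pair of orders is in hand, the rest of the argument is a direct reduction via Proposition~\ref{prop:extendingexpr}(1): the subclass $\clC_{\mathrm{LO}}$ is $\supmod$-upward closed in $\clC$, so expressibility of $\sqsupseteq$-satisfiability on $\clC$ would transfer to $\clC_{\mathrm{LO}}$, and the contradiction occurs already there.
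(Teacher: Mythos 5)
Your overall strategy is the same as the paper's: reduce non-expressibility to the classical fact (Karp's theorem) that well-foundedness of linear orders is not captured by any single $L_{\kappa,\omega}$-sentence, by observing that an expressing map $f$ would turn a first-order sentence about minimal elements into one $L_{\infty,\omega}$-sentence (hence an $L_{\kappa,\omega}$-sentence for some $\kappa$) separating a well-order from a suitably equivalent non-well-order. The restriction to the $\supmod$-upward-closed subclass $\clC_{\mathrm{LO}}$ via Proposition~\ref{prop:extendingexpr}(1) is a legitimate, slightly cleaner packaging of what the paper does by direct construction, and deferring the back-and-forth to the standard literature is exactly what the paper does as well (it cites Rosenstein's Theorem~14.29 rather than redoing the partial-isomorphism argument).

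The genuine gap is the phrase ``other symbols of $\Omega$, if any, being interpreted in some fixed trivial way.'' This is precisely where the claimed equivalence can fail, and it is the one point the paper has to work for. If $\Omega$ contains a constant symbol $c$, every submodel must contain $c^{\stA}$; if $c^{\stA}$ lies at or below the descending part of the order, then every submodel has an $R$-least element. For instance, a structure of order type $1+\omega^{*}$ with the constant interpreted at the bottom is not well-founded, yet no submodel lacks a least element, so your equivalence ``$\stA\in K_\varphi$ iff $\stA$ is not well-founded'' is false for that ``trivial'' interpretation. (Function symbols raise a milder version of the same issue: the descending sequence must generate a submodel that is still downward unbounded in itself.) The paper's fix is specific rather than trivial: append a fresh $<$-\emph{last} element to both linear orders, interpret every constant symbol there and every function symbol of positive arity as the projection onto the first argument, so that the submodels are exactly the subsets containing the top point; one must then re-verify that adding a last element preserves $L_{\kappa,\omega}$-equivalence (the paper invokes Rosenstein's Lemma~14.24 for this). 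You would also need the routine remark that the trivial expansion to $\Omega$ preserves the $L_{\kappa,\omega}$-equivalence of the two structures. With these points supplied your argument closes and essentially coincides with the paper's.
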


\begin{proof}
We can suppose w.l.g.~that $\Omega$ contains a~binary
predicate symbol~$<$ (otherwise mimic it by a~predicate
symbol of a~bigger arity by fixing other arguments).
Toward a~contradiction, assume that some~$f$ mapping
the class of $L_{\infty,\omega}$-sentences into itself
expresses the $\sqsupseteq$-satisfiability. Let
$\varphi$~be an obvious $L_{\omega,\omega}$-sentence
saying that there exists a~$<$-minimal element, and
let $\psi$~be the sentence $\neg\,f(\neg\,\varphi)$.
Then $\psi$~says that each submodel has a~$<$-minimal
element (thus whenever $\Omega$~has no functional
symbols then $\psi$~says that $<$~is well-founded).
Let $\kappa$~be such that $\psi\in L_{\kappa,\omega}$.
It follows from Karp's theorem (see, e.g.,
\cite[Theorem~14.29]{rosenstein1982linear}) that there
are models $\mathfrak A_0$ and $\mathfrak B_0$ of
$\Omega_0=\{<\}$ such that $\mathfrak A_0$~is isomorphic
to an ordinal while $\mathfrak B_0$~is not, and
$\mathfrak A_0\equiv_{L_{\kappa,\omega}}\!\mathfrak B_0$.
Add a~$<$-last element to each of the models
$\mathfrak A_0$ and $\mathfrak B_0$ and check
that the resulting models $\mathfrak A_1$ and
$\mathfrak B_1$ remain $L_{\kappa,\omega}$-equivalent
(e.g., by using \cite[Lemma~14.24]{rosenstein1982linear}).

Expand $\mathfrak A_1$ and $\mathfrak B_1$ to models
$\mathfrak A$ and $\mathfrak B$ of~$\Omega$, respectively,
by interpreting each predicate symbol other than~$<$
by the empty set, each functional symbol of positive
arity by the projection onto the first argument, and
each constant symbol by the $<$-last element of the model.
It is easy to see that in both $\mathfrak A$ and
$\mathfrak B$ any formula of~$\Omega$ is equivalent
to a~formula of~$\Omega_0$; so we still have
$\mathfrak A\equiv_{L_{\kappa,\omega}}\!\mathfrak B$.
On the other hand, in both models every subset forms
a~submodel whenever it contains the $<$-last element of
the whole model, whence it easily follows that
$\mathfrak A\vDash\psi$ and $\mathfrak B\vDash\neg\,\psi$.
A~contradiction.
\end{proof}

\hide{

\begin{proposition}\label{prop:submod-non-exprr}
If $\Omega$~consists of a~binary predicate symbol,
then the $\sqsupseteq$-satisfiability is not
expressible in~$L_{\omega,\omega}$ and moreover,
in the infinitary language~$L_{\infty,\omega}$.
\end{proposition}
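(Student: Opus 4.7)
The plan is to argue by contradiction, exploiting the well-known fact that well-foundedness of a binary relation cannot be captured in $L_{\infty,\omega}$. First I would reduce to the case where $\Omega$ contains a binary predicate symbol $<$: if it has only a higher-arity relation, fix all but two arguments as free variables or as dummy constants, so the resulting binary relation can play the role of~$<$ in the argument below. Then, assuming $f:L_{\infty,\omega}\to L_{\infty,\omega}$ expresses the $\supmod$-satisfiability, I would take $\varphi$ to be the first-order sentence ``there is a $<$-minimal element'' and set $\psi := \neg f(\neg \varphi)$; this $\psi$ states that every submodel has a $<$-minimal element, a condition that, in the absence of function symbols, essentially forces the well-foundedness of~$<$.

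Next, I would invoke a classical Karp-style result: for each cardinal $\kappa$ there exist models $\stA_0, \stB_0$ of the signature $\{<\}$ that are $L_{\kappa,\omega}$-equivalent but such that $<^{\stA_0}$ is a well-order (isomorphic to some ordinal) while $<^{\stB_0}$ contains an infinite descending chain. Choose $\kappa$ large enough that $\psi \in L_{\kappa,\omega}$. A technical subtlety is that ``every submodel has a~minimum'' is a~priori stronger than well-foundedness when arbitrary subsets need not be submodels. To handle this I would adjoin a $<$-greatest element (``top'') to both models, obtaining $\stA_1,\stB_1$; a~standard back-and-forth argument shows the $L_{\kappa,\omega}$-equivalence survives this addition. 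In the expanded signature every subset containing the top is (after trivially interpreting the other symbols) automatically a submodel.

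Finally, I would expand $\stA_1, \stB_1$ to models $\stA, \stB$ of~$\Omega$ by interpreting each other predicate as empty, each functional symbol of positive arity as projection onto the first argument, and each constant as the top. The trivialization makes every $\Omega$-formula equivalent, inside either model, to an $\{<\}$-formula, so $L_{\kappa,\omega}$-equivalence persists. At the same time, in $\stB$ any infinite descending $<$-chain together with the top forms a submodel with no $<$-minimum, so $\stB \vDash \neg \psi$; by well-foundedness of $<^{\stA}$ every subset is well-founded and hence $\stA \vDash \psi$. This contradicts their $L_{\kappa,\omega}$-equivalence.

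The main obstacle is verifying the two preservation steps for $L_{\kappa,\omega}$-equivalence: adjoining the distinguished top element, and trivially extending to the full signature~$\Omega$. Both should fall to routine back-and-forth reasoning (or to standard lemmas on $L_{\infty,\omega}$-equivalence under simple expansions), but they must be checked carefully because the whole argument collapses if the extra structure inadvertently separates the two models in $L_{\kappa,\omega}$.
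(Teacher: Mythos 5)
Your proof is correct and follows essentially the same route as the paper's own argument (which the paper in fact gives for the more general hypothesis that $\Omega$ merely \emph{contains} a predicate symbol of arity $\ge 2$, possibly alongside function and constant symbols): set $\psi=\neg f(\neg\varphi)$, use Karp's theorem to produce an ordinal and a non-well-ordered linear order that are $L_{\kappa,\omega}$-equivalent, and push the equivalence through the top-element and signature expansions. The only remark worth making is that for the statement as literally given --- a signature consisting of a single binary predicate --- every nonempty subset is already a submodel, so the top-element and trivialization steps are superfluous and one can conclude immediately from the classical fact that well-foundedness is not definable in $L_{\infty,\omega}$.
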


\begin{proof}
Toward a~contradiction, assume that some $f$
expresses the $\sqsupseteq$-satisfiability.
Let $\varphi$~be an obvious
$L_{\omega,\omega}$-sentence
saying that there exists a~$\leq$-minimal element
(where $\leq$ is the~binary predicate symbol in~$\Omega$).
Then $\neg\,f(\neg\,\varphi)$ expresses
the existence of a~$\leq$-minimal element in each non-empty
subset, i.e., the well-foundedness of~$\leq$.
The latter is not expressible even in~$L_{\infty,\omega}$
(see, e.g., \cite[Theorem 3.2.20]{BarwiseFeferman}).
\end{proof}
}

However, second-order language suffices to express
the $\sqsupseteq$-satisfiability by the relativization
argument (see, e.g., \cite[p.~242]{QuantHand14-2007}).
Given a~second-order formula~$\varphi$ and a~unary
predicate variable $U$ that does not occur in~$\varphi$,
let $\varphi^U$~be the relativization of~$\varphi$ to~$U$
defined in the standard way; in particular, if $P$ and $F$
are second-order predicate functional variables of arity~$n$,
then
\begin{itemize}
\item[]
$(\exists P\,\varphi)^U$ is
$
\exists P\,
\bigl(\forall x_0\ldots\forall x_{n-1}\,
\bigl(P(x_0,\ldots,x_{n-1})\to
\bigwedge_{i<n}U(x_i)\bigr)\wedge\varphi^U\bigr),
$
\item[]
$(\exists F\,\varphi)^U$ is
$
\exists F\,
\bigl(\forall x_0\ldots\forall x_{n-1}\,
\bigl(\bigwedge_{i<n}U(x_i)\to
U(F(x_0,\ldots,x_{n-1}))\bigr)
\wedge\varphi^U\bigr).
$
\end{itemize}
Let $\psi(U)$ be the formula expressing that $U$~is
a~submodel, i.e., saying that the interpretation of~$U$
is non-empty and is closed under interpretations of
functional symbols in~$\Omega$. Then the map
$\varphi\mapsto\exists U(\psi(U)\wedge\varphi^U)$
expresses the $\sqsupseteq$-satisfiability on the class
of all models of~$\Omega$. In view of
Proposition~\ref{prop:extendingexpr},
we obtain:

\begin{proposition}\label{prop:submod-quot-exprr}
Let $\kappa=|\{F\in\Omega:F$~is a~functional symbol$\}|.$
\begin{itemize}
\item[1.]
Let $\mathcal C$~be a~class of models of~$\Omega$ closed
under submodels. Then the $\sqsupseteq$-satisfiability is
expressible on $\mathcal C$ in $L^{2}_{\lambda,\omega}$
whenever $\lambda>\kappa$ and $\lambda\ge\omega$.
\item[2.]
Let $T$~be a~set of sentences of $L^{2}_{\mu,\omega}$
of~$\Omega$. Then the $\sqsupseteq$-satisfiability is
expressible on the class $Mod(T)$ in $L^{2}_{\lambda,\omega}$
whenever $\lambda>\max(\kappa,|T|)$ and $\lambda\geq\mu$.
\end{itemize}
\end{proposition}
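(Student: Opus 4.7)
The plan is quite direct: the paragraph preceding the statement already supplies the candidate operator $f(\varphi) = \exists U(\psi(U) \wedge \varphi^U)$ and argues via the standard relativization lemma that it expresses $\sqsupseteq$-satisfiability on the class of all $\Omega$-models. What remains is (a) to count the syntactic complexity of this operator against $L^2_{\lambda,\omega}$, and (b) to transfer the expressibility from all models down to $\mathcal{C}$ and to $\mathrm{Mod}(T)$ by invoking the two parts of Proposition~\ref{prop:extendingexpr}. This yields parts (1) and (2) respectively, so no new semantic work is required beyond careful bookkeeping.

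For part (1), I would first observe that any class $\mathcal{C}$ of $\Omega$-models closed under submodels is exactly a $\sqsupseteq$-upward closed subclass of the class of all $\Omega$-models, so by Proposition~\ref{prop:extendingexpr}(1) the same $f$ expresses $\sqsupseteq$-satisfiability on $\mathcal{C}$. It then suffices to check that $f(\varphi) \in L^2_{\lambda,\omega}$ whenever $\varphi$ is. The relativization $\varphi \mapsto \varphi^U$ is a straightforward induction that inserts $U$-guards on first- and second-order quantifiers without altering the quantifier pattern or the cardinality of any infinitary conjunction or disjunction occurring in $\varphi$. The sentence $\psi(U)$ is a conjunction of at most $\kappa+1$ clauses, namely $\exists x\, U(x)$ together with one clause $\forall x_0 \ldots \forall x_{n-1}(\bigwedge_{i<n} U(x_i) \to U(F(x_0,\ldots,x_{n-1})))$ for each functional symbol $F \in \Omega$ of arity $n$, and hence lies in $L^2_{\lambda,\omega}$ precisely when $\lambda > \kappa$ and $\lambda \geq \omega$ (the second inequality handling the case of finite $\kappa$).

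For part (2), I would apply Proposition~\ref{prop:extendingexpr}(2) with the sentence $\chi = \bigwedge T$. Since $T \subseteq L^2_{\mu,\omega}$ has cardinality $|T|$, the conjunction $\chi$ lies in $L^2_{\lambda,\omega}$ exactly when $\lambda > |T|$ and $\lambda \geq \mu$; combined with the bound $\lambda > \kappa$ from part (1), this gives the stated $\lambda > \max(\kappa,|T|)$ and $\lambda \geq \mu$. The proposition then immediately yields that $g : \varphi \mapsto f(\varphi \wedge \chi)$ expresses $\sqsupseteq$-satisfiability on $\mathrm{Mod}(T) = \mathrm{Mod}(\Omega) \cap \mathrm{Mod}(\chi)$, since a submodel $\mathfrak{B} \sqsubseteq \mathfrak{A}$ satisfying $\varphi \wedge \chi$ automatically lies in $\mathrm{Mod}(T)$. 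The only remotely delicate point in the argument is this cardinality bookkeeping on the conjunctions in $\psi(U)$ and $\chi$; the semantic core of the construction has already been handled in the paragraph preceding the statement.
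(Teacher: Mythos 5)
Your proposal is correct and follows essentially the same route as the paper, which likewise defines $f(\varphi)=\exists U(\psi(U)\wedge\varphi^U)$ via relativization and then states the proposition ``in view of Proposition~\ref{prop:extendingexpr}'', leaving the cardinality bookkeeping implicit. Your explicit count of the $\kappa$-many closure clauses in $\psi(U)$ and the $|T|$-ary conjunction $\bigwedge T$ just makes precise what the paper takes for granted.
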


\begin{remark}
These results on expressibility can be refined in several
directions. In particular, the first statement of
Proposition~\ref{prop:submod-quot-exprr}
remains true for monadic language~$L^{2}_{\lambda,\omega}$;
the assumption $\lambda>\kappa$ is necessary;
for details and further results, see~\cite{Saveliev2019}.
\hide{
For instance, it holds for
monadic second order language (maybe, with infinitary
conjunctions for the case of infinite $\Omega$ and~$T$).
}
\end{remark}

%

\paragraph{Axiomatization}

Henceforth in this section we assume that $L$~expresses
the $\supmod$-satisfiability on the class of models
under consideration.

The next easy result is soundness for modal theories
of the submodel relation.

\begin{theorem}\label{thm:sqsup-soundness}
Let $\clC$ be a~class of $\Omega$-models closed
under submodels. Then $\MTh^L(\clC,\supmod)$ is
a~normal modal logic including~$\mathrm{S4}$.
If moreover, $\Omega$~contains a~constant symbol,
then $\MTh^L(\clC,\supmod)$ includes $\mathrm{S4.2.1}$.
\end{theorem}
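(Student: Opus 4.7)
The plan is to invoke Theorem~\ref{thm:big-as-general}, which identifies $\MTh^L(\clC,\supmod)$ with the modal logic $\MLog(\clC,\supmod,\clV)$ of the class general frame of models. In particular, this identification already yields that the theory is a normal modal logic, so it remains only to verify that the extra axioms of $\lS{4}$ (and, when a constant symbol is present, of $\lS{4.2.1}$) are valid in that frame. Since those axioms are characterized by elementary first-order conditions on the accessibility relation, the standard correspondence arguments carry over from set frames to class frames without change.

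For $\lS{4}$, the task is to verify that $\supmod$ is reflexive and transitive on $\clC$. Reflexivity is immediate, and transitivity amounts to the obvious fact that a submodel of a submodel is a submodel. Closure of $\clC$ under submodels ensures that the models occurring in such compositions remain in $\clC$, so the usual correspondence gives the validity of $\mpv\imp\Di\mpv$ and $\Di\Di\mpv\imp\Di\mpv$ in $(\clC,\supmod,\clV)$.

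For $\lS{4.2.1}$, assuming $\Omega$ contains a constant symbol, the key construction is a \emph{minimum submodel} of each $\stA\in\clC$: let $\stA_0$ be the submodel of $\stA$ whose universe consists of the interpretations in $\stA$ of all closed terms of $\Omega$. This universe is nonempty (by the constant symbol) and closed under all function symbols (by induction on term complexity), hence defines a genuine submodel; closure of $\clC$ under submodels puts $\stA_0$ in $\clC$. I then plan to extract three properties: (i) $\stA\supmod\stA_0$; (ii) every submodel of $\stA$ contains $\stA_0$ as a submodel, since it must interpret the constants identically and be closed under the function symbols; (iii) the only submodel of $\stA_0$ is $\stA_0$ itself. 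To verify $\Box\Di\mpv\lra\Di\Box\mpv$ at an arbitrary $\stA$ I will chase through $\stA_0$ in both directions. In the forward direction, $\Box\Di\mpv$ at $\stA$ gives $\Di\mpv$ at $\stA_0$ by~(i), which by~(iii) collapses to $\mpv$ at $\stA_0$, hence to $\Box\mpv$ at $\stA_0$ again by~(iii), and finally to $\Di\Box\mpv$ at $\stA$ via~(i). Conversely, if $\stA\supmod\stB$ and $\stB\mo\Box\mpv$, then by~(ii) $\stA_0$ is a submodel of $\stB$, so $\stA_0\mo\mpv$; and by~(ii) once more $\stA_0$ sits inside every submodel of $\stA$, giving $\Di\mpv$ at every $\supmod$-successor of $\stA$ and hence $\Box\Di\mpv$ at $\stA$.

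I do not foresee serious obstacles. The only point requiring a little care is that the reasoning takes place inside the class general frame of Remark~\ref{rem:claclaclassGeneral}; but the axioms under consideration are all governed by the same elementary conditions on $\supmod$ as in the set case, so validity transfers cleanly. If a more conservative route is preferred, the same argument can be carried out on the set frame of theories $(\Ths,\clR_\Ths,\clA)$ of Theorem~\ref{thm:maintoolnew}, since reflexivity, transitivity, and the minimum-submodel property all descend to this quotient.
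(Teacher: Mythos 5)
Your proposal is correct and follows essentially the same route as the paper: reflexivity and transitivity of $\supmod$ give $\lS{4}$, and the $\lS{4.2.1}$ axiom is verified by chasing through the minimum submodel generated by the constants (your $\stA_0$ is exactly the paper's $\stB$), with normality supplied by Theorem~\ref{thm:big-as-general}. The only cosmetic difference is that you phrase the argument via validity in the class general frame while the paper works directly with valuations $\vl$ in $L$, but these are interchangeable here.
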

\begin{proof}
Let $\stA\in\clC$, $\vl$ a valuation in $L$. Trivially, $\stA\mo\vlf{\Di\Di \mathsf{p}\imp \Di \mathsf{p}}$
and $\stA\mo\vlf{\mathsf{p}\imp \Di \mathsf{p}}$.
If $\Omega$ contains a constant symbol, consider the submodel $\stB$ of $\stA$ generated by constants. It is straightforward that in this case
${\stA\mo \vlf{\Box\Di \mathsf{p}}} \tiff \stB\mo  \vlf{\Di \mathsf{p}} \tiff \stB\mo  \vlf{\Box  \mathsf{p}} \tiff
\stA \mo \vlf{\Di \Box \mathsf{p}}.$
\end{proof}

We are going to prove completeness.
Let $Q_n$ be the lexicographic  product
of $(n^{<n},\subseteq)$ (an $n$-ramified tree
of height~$n$) and $(n,n\!\times\!n)$ (a~cluster
of size~$n$). Thus for $s,t\in n^{<n}$
and $i,j\in n$, in $Q_n$ we have
$$
(s,i)\le(t,j)
\;\Iff\;
s\subseteq t,
$$
so $Q_n$~is a~pre-tree which is $n$-ramified, has
height~$n$ and clusters of size~$n$ at each point.
Let also $Q_n'$~be the ordered sum of $Q_n$ and
a~reflexive singleton, thus $Q'_n$~adds to~$Q_n$
an extra top element. The following fact is standard (see,~e.g.,~\cite[p. 563]{CZ}).

\begin{proposition}\label{prop:preetreeS4}
Let $\mvf$ be a modal formula.
If $\mvf\notin\lS{4}$, then $\mvf$ is not valid in $Q_n$ for some $n>0$.
If $\mvf\notin\lS{4.2.1}$, then $\mvf$ is not valid in $Q'_n$ for some $n>0$.
\end{proposition}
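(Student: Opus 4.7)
The plan is to combine the finite model property of $\lS{4}$ and $\lS{4.2.1}$ with a standard universality of the frames $Q_n$ and $Q'_n$: every sufficiently small finite rooted frame of the appropriate kind is a p-morphic image of some $Q_n$ (respectively $Q'_n$). Since validity is preserved under p-morphisms, a refutation of $\mvf$ in such a frame transfers back to the corresponding $Q_n$ or $Q'_n$.

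For the first statement, I would invoke the well-known finite model property of $\lS{4}$ with respect to finite rooted pre-orders, and refine it to finite pre-trees (trees of clusters) by passing to the skeleton and unraveling. Thus if $\mvf \notin \lS{4}$, pick a finite refuting pre-tree $\frF$ with depth $d$, branching $w$, and maximal cluster size $c$, and set $n = \max(d,w,c)$. A surjective p-morphism $\pi\colon Q_n \toto \frF$ can be defined by recursion on the tree structure of $Q_n$: map the root cluster of $Q_n$ (of size $n \geq c$) onto the root cluster of $\frF$; since $Q_n$ has $n \geq w$ subtrees attached to the root, distribute them (with repetition if necessary) onto the successor subtrees of the root of $\frF$, and iterate to depth $d$. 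The p-morphism conditions hold by the explicit construction, and refutability of $\mvf$ transfers to $Q_n$.

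For the second statement, I would use that $\lS{4.2.1}$ is Kripke complete with respect to finite rooted pre-orders having a terminal cluster that is a reflexive singleton; this follows because $\Box\Di\mathsf p \leftrightarrow \Di\Box\mathsf p$ forces both directness (a unique final cluster) and a McKinsey-type condition making that cluster a reflexive singleton. Given $\mvf \notin \lS{4.2.1}$, take such a finite refuting frame $\frF$ and let $\frF_0$ be $\frF$ with its top point removed. The first part yields a p-morphism $Q_n \toto \frF_0$ for suitable $n$; extending it by sending the added reflexive top of $Q'_n$ to the top point of $\frF$ produces a p-morphism $Q'_n \toto \frF$, and hence $\mvf$ fails in $Q'_n$.

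The main technical step is the recursive p-morphism construction onto a bounded finite pre-tree, but it is routine given the explicit structure of $Q_n$ as the lexicographic product of an $n$-ramified tree of height $n$ with an $n$-cluster; the verification that the tree-descending assignment satisfies the forth and back conditions reduces to the fact that we have chosen $n$ large enough in every dimension. Since the result is standard, a brief reference to~\cite[p.~563]{CZ} would also suffice.
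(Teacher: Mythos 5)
The paper gives no proof of this proposition at all, simply citing it as standard from \cite[p.~563]{CZ}, and your argument is precisely that standard proof: the finite model property of $\lS{4}$ (resp.\ $\lS{4.2.1}$) over finite rooted pre-trees (resp.\ pre-trees with an added reflexive top), followed by a surjective p-morphism from $Q_n$ (resp.\ $Q'_n$) with $n$ chosen to dominate depth, branching, and cluster size. The one point to tidy is that your $\frF_0$ (the refuting $\lS{4.2.1}$-frame with its top removed) need not itself be a pre-tree, so you should unravel $\frF_0$ first and only then re-attach the top before building the p-morphism from $Q'_n$.
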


Let $\equiv$ be the $L$-equivalence.
For a model  $\mathfrak A$, let
$\Sub(\stA)$ be the set of all its submodels,
$\Sub(\stA)_\equiv$ abbreviate $\Sub(\stA)/{\equiv}$.

\begin{theorem}\label{thm:models-An}
Let $\Omega$~have a~functional symbol of arity~$\geq 2$.
For every positive $n<\omega$, there exists
a~model~$\mathfrak A_n$ of~$\Omega$ such that
\[
\bigl(\Sub(\mathfrak A_n)_\equiv,\sqsupseteq_\equiv\bigr)
\text{ is isomorphic to }
\left\{
\begin{array}{ll}
Q_n & \text{ if $\Omega$ has no constant symbols},
\\
Q'_n& \text{ otherwise}.
\end{array}
\right.
\]
\end{theorem}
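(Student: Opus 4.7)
The plan is to construct $\mathfrak{A}_n$ explicitly so that its submodels, taken modulo $L$-equivalence, realize the pre-tree $Q_n$ (or $Q'_n$ when a constant is available) under $\supmod_\equiv$. The construction leans on the available functional symbol of arity~$\geq 2$, which I would use to define a closure operation on subsets of the universe that mirrors the prefix order on~$n^{<n}$.

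Concretely, I would take the universe of $\mathfrak{A}_n$ to carry points indexed by pairs $(s,i)\in n^{<n}\times n$, possibly together with auxiliary elements, and design the functional symbol $F$ so that the submodel generated by a subset $X$ is determined, up to $L$-equivalence, by two pieces of data: a tree node $s_X\in n^{<n}$ recording how far $X$ ``climbs'' the tree under closure, and a cluster index $i_X\in n$ recording an orbit under a group of automorphisms of $\mathfrak{A}_n$ that permutes cluster copies. The assignment $[\mathfrak{B}]_\equiv\mapsto(s_{\mathfrak{B}},i_{\mathfrak{B}})$ should then be a bijection between $\Sub(\mathfrak{A}_n)_\equiv$ and $n^{<n}\times n$, and one checks that $\mathfrak{B}\supmod\mathfrak{C}$ up to $\equiv$ translates exactly into $s_{\mathfrak{B}}\subseteq s_{\mathfrak{C}}$, giving an isomorphism of pre-orders onto $Q_n$. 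The cluster of size $n$ at a fixed tree node $s$ requires, for each pair $i\neq j$, representatives $\mathfrak{B}_i$ in the class $(s,i)$ and $\mathfrak{B}_j$ in $(s,j)$ with $\mathfrak{B}_i\supmod\mathfrak{B}_j$ (and symmetrically modulo $\equiv$), which I would secure using the cluster-swapping automorphisms to embed a representative of one class into a representative of the other.

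When $\Omega$ additionally contains a constant symbol, the submodel generated by the interpretations of the constants is a unique $\supmod$-minimum (it sits inside every submodel), so its $\equiv$-class forms an extra $\supmod_\equiv$-minimum element appended to the quotient, yielding $Q'_n$ in place of $Q_n$. This also matches, on the level of $\supmod$-geometry, the distinction between the $\lS{4}$-soundness and the $\lS{4.2.1}$-soundness established in Theorem~\ref{thm:sqsup-soundness}.

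The main obstacle is verifying the cluster property coherently with the tree structure: since $L$ is expressive enough to capture $\supmod$-satisfiability (for example, the second-order language of Proposition~\ref{prop:submod-quot-exprr}), distinguishing $\equiv$-classes is typically easy, so the delicate task is producing exactly $n$ identifications per tree node without collapsing across distinct tree levels. I would expect this to require careful case analysis of closure under $F$ on various generating subsets, together with explicit $L$-isomorphisms between submodels within each cluster arising from the automorphism group of $\mathfrak{A}_n$; producing enough automorphisms to get the mutual $\supmod_\equiv$-accessibility inside a cluster, while keeping different tree levels $\equiv$-distinguishable, is the technical heart of the construction.
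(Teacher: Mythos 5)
Your overall architecture --- a universe indexed by tree nodes paired with cluster indices, a binary operation whose generated-submodel closure realizes the prefix order on $n^{<n}$, first-order distinguishability of the resulting classes, and the constant-generated submodel supplying the extra point of $Q'_n$ --- matches the paper's. But the mechanism you propose for producing the clusters of size $n$ cannot work. If two submodels of $\mathfrak{A}_n$ are related by an automorphism of $\mathfrak{A}_n$ (or by any isomorphism whatsoever), then by the standing assumption that satisfiability in $L$ is invariant under isomorphism they are $L$-equivalent, hence identified in $\Sub(\mathfrak{A}_n)_\equiv$. So ``cluster-swapping automorphisms'' would collapse your $n$ intended cluster members to a single point of the quotient: you cannot simultaneously keep the classes $(s,i)$ and $(s,j)$ distinct and connect them by automorphisms of the ambient model.

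What is actually needed is different in kind. Since $\supmod$ between genuine submodels of a fixed model is inclusion of universes, mutual accessibility $[\mathfrak{B}]\supmod_\equiv[\mathfrak{C}]\supmod_\equiv[\mathfrak{B}]$ for \emph{distinct} classes forces $\mathfrak{B}$ to properly contain a copy of $\mathfrak{C}$ properly containing a copy of $\mathfrak{B}$, and so on; hence the cluster fibre must be infinite, and your universe $n^{<n}\times n$ (finite, auxiliary elements notwithstanding) cannot support a cluster of size $n\ge 2$. The paper instead takes $X_n=n^{<n}\times\omega$, arranges the binary operation so that the submodels are exactly the cones $X_n(s,i)=\{(t,j): s\subseteq t,\ i\le j\}$, shows $\mathfrak{A}_n(s,i)\cong\mathfrak{A}_n(s,j)$ precisely when $i\equiv_n j$ via the shift $(t,l)\mapsto(t,l+n)$ --- an isomorphism between nested proper submodels, not an automorphism --- and exhibits explicit first-order sentences $\chi_{s,k}$ (using a ``non-successor'' formula to read off $i\bmod n$) proving the $n$ residue classes elementarily inequivalent. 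You would in any case still owe the definition of the operation and the verification that the generated submodels are exactly the intended cones, which you defer; but the cluster mechanism is the step that is genuinely wrong rather than merely omitted.
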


\begin{proof}
First, suppose that $\Omega$~has no constant symbols.

Without loss of generality we may assume that
$\Omega$~has a~binary functional symbol;
we write  $\,\cdot\;$ for it.

Fix $n\geq 1$.
Let $X_n=n^{<n}\times \omega$. We define the model
$\mathfrak{A}_n=(X_n,\cdot\,,\dots)$ of~$\Omega$
as follows. Let $E$~be any injective map from
 $n^{<n}$
into~$\omega$. For $s,t\in n^{<n}$ and $i,j\in\omega$, we
put
\begin{align*}
(s,i)\cdot(t,j)=
\left\{
\begin{array}{ll}
(s,i+1)
&\text{if }s=t,\,i=j,
\\
(s^{\conc}({i\bmod n}),j)
&\text{if }s=t,\,j<i,\,|s|<n-1,
\\
(s,j+E(s))
&\text{if }s=t,\,j=i+1,\,i\equiv_n\!0,
\\
(\inf\{s,t\},\inf\{i,j\})
&\textrm{otherwise},
\end{array}
\right.
\end{align*}
where ${}^{\conc}$ denotes the concatenation,
$|s|$~the length of~$s$, $\bmod$~the remainder,
and $\equiv_n$~the congruence modulo~$n$.
For other operations~$F$ in $\mathfrak{A}_n$ we put
$F((s,i),\ldots)=(s,i)$ and take the relations
in $\mathfrak{A}_n$ to be empty.

For $(s,i)\in X_n$ let
$
X_n(s,i)=
\{(t,j)\in X_n:(s,i)\preceq(t,j)\}
$,
where we let $(s,i)\preceq(t,j)$
iff $s\subseteq t$ and $i\le j$.

\begin{lemma}\label{lem:Aui}
An $X\subseteq X_n$ is the universe
of a~submodel of $\mathfrak{A}_n$ iff
$X=X_n(s,i)$ for some $(s,i)\in X_n$.
\end{lemma}

\begin{proof}
Straightforward from the definition of $\mathfrak{A}_n$.
\end{proof}

Let $\mathfrak{A}_n(s,i)$ be the submodel of $\mathfrak{A}_n$ with the universe $X_n(s,i)$.

\begin{lemma}\label{lem:isom}
Let $(s,i),(s,j)\in X_n$. If $i\equiv_n\!j$, then the
models $\mathfrak{A}_n(s,i)$ and $\mathfrak{A}_n(s,j)$
are isomorphic.
\end{lemma}

\begin{proof}
The map $(t,l)\mapsto(t,l+n)$ is an isomorphism between
$\mathfrak{A}_n(s,i)$ and $\mathfrak{A}_n(s,i+n)$.
\end{proof}
We define $p^0(x)$ as $x$, and $p^{k+1}(x)$ as $(p^{k}(x))\cdot (p^{k}(x))$. Then  for $k<\omega$
$$
\mathfrak{A}_n\vDash(t,j)=p^k(s,i)
\quad \text{iff}\quad s=t \text{ and } j=i+k.
$$
For $s\in n^{<n}$, let $\varphi_s(x)$ be
the following one-parameter formula:
$$
x\cdot p(x)=p^{E(s)+1}(x).
$$

\begin{lemma}\label{lem:x1}
Let $\mathfrak{A}$ be a~submodel of $\mathfrak{A}_n$
and $(t,i)$ an element of~$\mathfrak{A}$. Then
$\mathfrak{A}\vDash\varphi_s(t,i)$
iff $s=t$ and $i\equiv_n\!0$.
\end{lemma}

\begin{proof}
We have $p(t,i)=(t,i+1)$.
By the definition, $(t,i)\cdot(t,i+1)=(t,i+E(t)+1)$ if
$i\equiv_n\!0$, and $(t,i)\cdot(t,i+1)=(t,i)$ otherwise.
\end{proof}

\begin{lemma}\label{lem:x2}
Let $\mathfrak{A}$ be a~submodel of $\mathfrak{A}_n$. For every $s\in n^{<n}$, we have:
 {$\mathfrak{A}\vDash\exists x\,\varphi_s(x)$} iff
 $(s,i)$ is in $\mathfrak{A}$ for some
$i\in\omega$.
\end{lemma}

\begin{proof}
The `only if' part is immediate from Lemma~\ref{lem:x1}. For the `if' part we use Lemmas \ref{lem:x1} and \ref{lem:Aui}.
\end{proof}

For $S\subseteq n^{<n}$, let
$\chi_S$~be the sentence
$
\bigwedge_{s\in S}\exists x\,\varphi_s(x)
\;\wedge\,
\bigwedge_{s\notin S}\neg\,\exists x\,\varphi_s(x),
$
and let $\chi_{\ge s}$ be $\chi_S$ for
$S=\{t\in n^{<n}:s\subseteq t\}$.

\begin{lemma}\label{lem:x3}
Let $\mathfrak{A}$ be a~submodel of $\mathfrak{A}_n$.
Then $\mathfrak{A}\vDash\chi_{\ge s}$ iff
$\mathfrak{A}=\mathfrak{A}_n(s,i)$ for some $i\in\omega$.
\end{lemma}
\begin{proof}
Follows from Lemmas \ref{lem:x2} and \ref{lem:Aui}.
\end{proof}

Let $\psi(x)$ be the formula $\neg\,\exists y\,(x=p(y)).$ Then
$$\mathfrak A_n(s,i)\vDash\psi(t,j) \text{ iff } j=i.$$

For $s\in n^{<n}$ and $k<n$, let
$\chi_{s,k}$ be the following sentence:
$$
\exists x\,
\bigl(\varphi_s(p^{n-k}(x))\wedge\psi(x)\bigr)
\wedge
\chi_{\ge s}.
$$

\begin{lemma}\label{lem:character}
For every submodel $\mathfrak{A}$ of $\mathfrak{A}_n$ and every $k<n$, we have
$\mathfrak{A}\vDash\chi_{s,k}$  iff
$\mathfrak{A}=\mathfrak{A}_n(s,i)$ for some $i$ such that $i\equiv_n k$.
\end{lemma}

\begin{proof}
Follows from Lemmas \ref{lem:x3} and \ref{lem:x1}.
\end{proof}

\begin{lemma}\label{lem:comp-important}
Let $\mathfrak{A},\mathfrak{B}$ be submodels of
$\mathfrak{A}_n$.  The following
are equivalent:
\begin{enumerate}
\setlength\itemsep{0em}
\item[(i)]
$\mathfrak{A}$ and $\mathfrak{B}$ are isomorphic,
\item[(ii)]
$\mathfrak{A}$ and $\mathfrak{B}$ are $L$-equivalent,
\item[(iii)]
$\mathfrak{A}$ and $\mathfrak{B}$ are elementarily equivalent, i.e., $L_{\omega,\omega}$-equivalent,
\item[(iv)]
$\mathfrak{A}=\mathfrak{A}_n(s,i)$ and $\mathfrak{B}=\mathfrak{A}_n(s,j)$
for some $s\in n^{<n}$ and $i,j<\omega$ such that $i\equiv_n j$.
\end{enumerate}
\end{lemma}

\begin{proof}
The implications {(i)$\,\Imp\,$(ii)} and {(ii)$\,\Imp\,$(iii)} are immediate from our basic assumptions on model-theoretic languages.
The crucial step {(iii)$\,\Imp\,$(iv)} follows from Lemmas \ref{lem:Aui} and  \ref{lem:character}.
Finally, {(iv)$\,\Imp\,$(i)} holds by Lemma \ref{lem:isom}.
\end{proof}

From Lemma \ref{lem:Aui} we conclude that $(\Sub(\mathfrak{A}_n),\sqsupseteq)$
is isomorphic to $(X_n,\preceq)$. Now it follows that
$(\Sub(\mathfrak{A}_n)_\equiv,\sqsupseteq_\equiv)$
is isomorphic to~$Q_n$, as required.

\medskip
For the case when $\Omega$~has constant symbols,
we add a~new element~$c$ to $X_n$ and define the model
$\mathfrak{A}'_n$ on the set $X_n\cup\{c\}$. We extend
the above defined operation~$\cdot$ by letting
$c\cdot x=x\cdot c=c$ for all~$x$; all constant symbols
in~$\Omega$ are interpreted by~$c$.
The same arguments as above prove that
$(\Sub(\mathfrak{A}'_n)_\equiv,\sqsupseteq_\equiv)$
is isomorphic to~$Q'_n$.

This completes the proof of Theorem \ref{thm:models-An}.
\end{proof}

Now the completeness result follows:

\begin{theorem}\label{thm:sqsup-completeness}
Let $\Omega$~contain a~functional symbol of arity~$\ge 2$,
and let $\mathcal C$~be the class of all models of~$\Omega$.
Then
$$
\mathrm{MTh}^L(\mathcal C,\sqsupseteq)=
\left\{
\begin{array}{ll}
\mathrm{S4}
&\text{ if $\Omega$~has no constant symbols},
\\
\mathrm{S4.2.1}
&\text{ otherwise}.
\end{array}
\right.
$$
\end{theorem}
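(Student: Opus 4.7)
The soundness direction is already handled by Theorem~\ref{thm:sqsup-soundness}, so the task reduces to completeness. I would argue by contraposition: given $\mvf \notin \mathrm{S4}$ (or $\mvf \notin \mathrm{S4.2.1}$ when $\Omega$ contains a constant symbol), I aim to exhibit a propositional valuation in $L$ under which some model in $\clC$ refutes $\mvf$.

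The first step is to invoke Proposition~\ref{prop:preetreeS4} to obtain $n > 0$ such that $\mvf$ fails in the Kripke frame $Q_n$ (respectively $Q_n'$). The second step is to apply Theorem~\ref{thm:models-An}, which produces a model $\mathfrak{A}_n \in \clC$ whose submodels, quotiented by $L$-equivalence and ordered by $\supmod_\equiv$, form a frame isomorphic to $Q_n$ (respectively $Q_n'$). These two ingredients do all the real work; the remainder is bookkeeping with the semantic machinery of Section~\ref{sec:defs}.

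Since $\supmod$ is reflexive and transitive, the subclass $\mathrm{Sub}(\mathfrak{A}_n)$ coincides with $\supmod^{\!*}(\mathfrak{A}_n)$ and is $\supmod$-closed, so by Proposition~\ref{prop:extendingexpr}(1) the $\supmod$-satisfiability remains expressible in $L$ on this subclass. Corollary~\ref{cor:gener} then yields
\[
\mathrm{MTh}^L(\clC, \supmod) \;\subseteq\; \mathrm{MTh}^L(\mathrm{Sub}(\mathfrak{A}_n), \supmod).
\]
Because the set of complete $L$-theories realized by submodels of $\mathfrak{A}_n$ is finite (being in bijection with the finite poset $Q_n$, respectively $Q_n'$, by Theorem~\ref{thm:models-An}), Corollary~\ref{prop: finite frame} identifies the right-hand side with the modal logic of the Kripke frame $(\mathrm{Sub}(\mathfrak{A}_n)_\equiv, \supmod_\equiv)$, which by construction is $Q_n$ or $Q_n'$. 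Since $\mvf$ was refuted in that frame, $\mvf \notin \mathrm{MTh}^L(\clC, \supmod)$, which concludes the completeness argument.

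I anticipate no substantial obstacle at this stage, the genuine difficulty having been absorbed into Theorem~\ref{thm:models-An}. The only small checks are that the equivalence $\equiv$ used in Corollary~\ref{prop: finite frame} (equality of complete $L$-theories) agrees with the one appearing in Theorem~\ref{thm:models-An}, which is immediate from the definitions, and that passing from $\clC$ to the downward closed subclass $\mathrm{Sub}(\mathfrak{A}_n)$ preserves expressibility, which is precisely Proposition~\ref{prop:extendingexpr}(1).
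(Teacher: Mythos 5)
Your proposal is correct and follows essentially the same route as the paper: soundness from Theorem~\ref{thm:sqsup-soundness}, and completeness by combining Corollary~\ref{cor:gener} and Corollary~\ref{prop: finite frame} to reduce to the Kripke frames $(\Sub(\mathfrak{A}_n)_\equiv,\sqsupseteq_\equiv)$, which Theorem~\ref{thm:models-An} identifies with $Q_n$ or $Q_n'$. You merely make explicit two steps the paper leaves implicit (the appeal to Proposition~\ref{prop:preetreeS4} and to Proposition~\ref{prop:extendingexpr}(1)), which is fine.
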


\begin{proof}
We have soundness by Theorem \ref{thm:sqsup-soundness}.
On the other hand, if $\stA\in \clC$, then
$\mathrm{MTh}^L(\mathcal C,\sqsupseteq)$ is contained
in the logic of the Kripke frame
$(\Sub(\stA)_\equiv,\sqsupseteq_\equiv)$
by Corollaries \ref{cor:gener} and~\ref{prop: finite frame}.
Now completeness follows from
Theorem \ref{thm:models-An}.
\end{proof}

Notice that the binary operation used in the proof
of Theorem~\ref{thm:models-An} is not associative.
Modal axiomatizations of $\sqsupseteq$ in second-order
language on semigroups, monoids, and groups are open
questions.

Recall that Theorem~\ref{thm:sqsup-completeness}
was formulated under the assumption that $L$~expresses
the $\sqsupseteq$-satisfiability.
In the next section we discuss how to define
modal theories without such requirements. The logics
calculated in Theorem~\ref{thm:sqsup-completeness}
do not depend on~$L$, while in general, modal theories
depend on a~chosen model-theoretic language; we shall
discuss this in Section~\ref{sec:rob}.

\section{Inexpressible modalities}\label{sec:upanddown}

In this section, we discuss the situation when
the $\clR$-satisfiability is not expressible
in a~model-theoretic language.

\marISH{Reread}
\paragraph{Definition}

The following question was raised by one of
the reviewers on an earlier version of the paper:
what is the modal theory of $(\clC,\clR)$ if
the $\clR$-satisfiability on $\clC$ is not
expressible in a given language? In particular,
what is the modal theory of the relation~$\supmod$
in the first-order case?
Another natural question concerns the definition
of modal theories in the case of~$\submod$.

\hide{
Here we use the notation with superscripts
like $\mathrm{MTh}^{L}(\clC,\clR)$ to denote
the modal theory of  $(\mathcal C,\mathcal R)$ in~$L$.\marISH{hide}
}

\smallskip
Assume that $L$ is a~language stronger than~$K$,
and the $\clR$-satisfiability on $\clC$ is expressible
in~$L$. In this case, $\MTh^{K}(\clC,\clR)$ can be
naturally defined as the logic of the subalgebra of
the modal algebra of $L$ on $(\clC,\clR)$ generated
by the sentences of~$K$. Let us provide details.

First, we define an interim notion $\MTh^{\KL}(\clC,\clR)$.
By Theorem~\ref{thm:maintoolnew}, $\MTh^{L}(\clC,\clR)$
is the logic of
the Lindenbaum modal algebra
$\clA(L)=(L/{\approx},f^L_\approx)$.
Let $\clA(\KL)$ be the subalgebra of $\clA(L)$ generated by
the sentences of~$K$
(more formally,  $\clA(\KL)$ is generated by the set $\{[\vf]^L_\approx: \vf\in K_s\}$, where
$[\vf]^L_\approx=\{\psi\in L_s: \clC\mo \vf\lra\psi\}$).
Define $\MTh^{\KL}(\clC,\clR)$ as the modal logic
of $\clA(\KL)$. (Formally, we have defined
$\MTh^{\KL}(\clC,\clR)$ for the case $K\subseteq L$,
but the same construction works for the case when
$K$~is weaker than $L$ in~$\clC$, i.e., if every
$K$-definable subclass of $\clC$ is definable in~$L$.)

It is immediate that
$\MTh^{\KL}(\clC,\clR)$ is a~normal logic, which includes
$\MTh^L(\clC,\clR)$. Another simple (but important)
observation is that $\MTh^{\KL}(\clC,\clR)$ does not
depend on the choice of~$L$. Indeed, if $M$~is another
language stronger than~$K$, which expresses the
$\clR$-satisfiability on~$\clC$, then the algebras
$\clA(\KL)$ and $\clA(\Pair{K}{M})$ are isomorphic:
their elements can be thought as classes of models
obtained from definable in $K$ subclasses of~$\clC$
via Boolean operations and~$\clR^{-1}$. Thus we have

\begin{proposition}\label{prop:unspeak}
If languages $L$ and $M$ express the
$\clR$-satisfiability on~$\clC$ and are stronger
than~$K$, then:
\begin{enumerate}
\item
the algebras $\clA(\KL)$ and $\clA(\Pair{K}{M})$
are isomorphic, and so
\item
$\MTh^{\KL}(\clC,\clR)=\MTh^{\Pair{K}{M}}(\clC,\clR)$.
\end{enumerate}
\end{proposition}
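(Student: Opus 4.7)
My plan is to realize both subalgebras as the same concrete object and then read off the isomorphism. For $\vf \in L_s$ let $\clC_\vf = \{\stA\in\clC:\stA\vDash\vf\}$; by Theorem~\ref{thm:minmax-ths} the assignment $[\vf]^L_\approx\mapsto\clC_\vf$ realizes $\clA(L)$ as an algebra of subclasses of~$\clC$, with Boolean operations interpreted setwise and the modal operator interpreted as $X\mapsto\clR^{-1}(X)\cap\clC$ (this last uses that $f^L$ expresses $\clR$-satisfiability, so $\clC_{f^L(\vf)}=\clR^{-1}(\clC_\vf)\cap\clC$). An analogous realization works for $\clA(M)$, giving both Lindenbaum algebras as algebras of subclasses of~$\clC$ sitting inside a common ambient structure.

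Under these realizations, $\clA(\KL)$ becomes the subalgebra generated inside the $L$-definable subclasses of~$\clC$ by $\{\clC_\vf:\vf\in K_s\}$, while $\clA(\Pair{K}{M})$ becomes the subalgebra generated inside the $M$-definable subclasses by the same collection. But the generators are literally the same subclasses of~$\clC$, and the Boolean and modal operations on subclasses of~$\clC$ are defined in terms of $\clC$ and~$\clR$ alone, with no reference to the ambient language. Hence the two subalgebras coincide on the nose as subcollections of the ``power-class'' of~$\clC$, and the map $[\vf]^L_\approx\mapsto[\vf]^M_\approx$ for $\vf\in K_s$, extended uniquely by Boolean and modal operations, is an isomorphism between $\clA(\KL)$ and $\clA(\Pair{K}{M})$. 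Part~(2) then follows from part~(1) since isomorphic modal algebras validate exactly the same modal formulas.

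The main technical subtlety, flagged by the authors in Remarks \ref{rem:claclaclass1} and~\ref{rem:claclaclassGeneral}, is that subclasses of~$\clC$ need not form a set, so the ``power-class'' talk above needs interpretation. To sidestep foundational detours I would prove the key equivalence syntactically: by induction on a term~$\tau$ built from Boolean and modal operations with parameters from $K_s$, one shows that two such terms $\tau,\tau'$ take equal values in $\clA(L)$ iff for every $\stA\in\clC$ a certain condition, spelled out using only the parameters in $K_s$, the relation~$\clR$, and satisfaction on~$\clC$, holds at~$\stA$; this condition is independent of~$L$. Hence the same equality holds in $\clA(M)$, giving well-definedness and injectivity of the proposed map, while surjectivity is immediate since each generator $[\vf]^M_\approx$ is the image of $[\vf]^L_\approx$.
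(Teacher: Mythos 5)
Your proposal is correct and follows essentially the same route as the paper, which justifies the proposition in one sentence by noting that the elements of both $\clA(\KL)$ and $\clA(\Pair{K}{M})$ ``can be thought as classes of models obtained from definable in $K$ subclasses of~$\clC$ via Boolean operations and~$\clR^{-1}$''. Your realization of both subalgebras inside the definable subclasses of~$\clC$, with the modal operator $X\mapsto\clR^{-1}(X)\cap\clC$ independent of the ambient language, is just a careful spelling-out of that argument (and your syntactic induction on terms is a reasonable way to make the class-theoretic talk precise).
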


\begin{definition}[tentative]\label{def:secondMain}
If the $\clR$-satisfiability on $\clC$ is expressible
in some language $L$ stronger than~$K$, put
$\MTh^K(\clC,\clR)=\MTh^{\KL}(\clC,\clR)$.
\end{definition}

Trivially,
$\MTh^{\Pair{L}{L}}(\clC,\clR)=\MTh^L(\clC,\clR)$
whenever $L$ expresses the $\clR$-satisfiability
on~$\clC$. Thus this definition generalizes
Definition~\ref{def:exprMTH}. Note that, however,
the modal logic $\MTh^K(\clC,\clR)$ is not necessarily
a~``fragment'' of the theory $\Th^K(\clC)$ anymore.

\marISH{Reread}
Once we do not require the $\clR$-satisfiability
to be expressible in the language,
we can give an ``external''
definition of   modal theory and   modal algebra
for $\clC$, $\clR$, and~$K$.
Let $K_\Di$ be the set
of modal  {\em $K$-sentences}, which are built
from sentences of $K$ using Boolean connectives
and~$\Di$; namely, they are expressions of form
$\mvf(\psi_1,\ldots,\psi_n)$ where
$\mvf(\mathsf p_1,\ldots,\mathsf p_n)$ is a~modal
formula and $\psi_i$ are sentences of~$K$.
Such languages are regularly used in the context
of modal logics of relations between models of
arithmetic or set theory.%
\footnote{In~\cite{HamkinsArithmeticPotentialism2018},
the language~$K_\Di$ is called
{\em partial potentialist}.}
Given $\clC$ and $\clR$, the $K_\Di$-satisfaction
relation is defined in the straightforward way;
in particular, $\stA\mo\Di\mvf(\psi_1,\ldots,\psi_k)$
iff $\stB\mo\mvf(\psi_1,\ldots,\psi_k)$
for some $\stB\in\clC$ with $\stA\,\clR\,\stB$.
Assume that $\clR$ and $\clC$ satisfy
the following natural condition:
for all $\stA,\stA',\stB$ in~$\clC$,
\begin{equation}\label{eq:isom-bisim}
\stA\isom\stA'\;\&\;\stA\:\clR\:\stB
\;\Rightarrow\;
\EE\stB'\in\clC\;
(\stB'\isom\stB\;\&\;\stA'\,\clR\,\stB')
\end{equation}
(this condition says that the isomorphism equivalence $\isom$ is
a~{\em bisimulation} w.r.t.~$\clR$ on~$\clC$).
In this case the $K_\Di$-satisfaction relation
is preserved under isomorphisms, and so $K_\Di$
satisfies our basic assumptions on model-theoretic
languages.

The operation on sentences of $K_{\Di}$ that takes $\vf$
to $\Di\vf$ expresses the $\clR$-satisfiability
on $\clC$ in $K_{\Di}$. Hence,  we can apply
the constructions described in Section~\ref{sec:defs}
to~$K_\Di$\,. In particular, $\vf\approx \psi$ implies
$\Di\vf\approx\Di\psi$ (recall that $\vf\approx\psi$
means $\clC\mo\vf\lra\psi$), so the connective~$\Di$
induces the operation $\Di_\approx$ on~$K_\Di/{\approx}$,
and $\clA(K_\Di)=(K_\Di/{\approx},\Di_\approx)$ is
a~modal algebra. It is immediate that $\clA(K_\Di)=\clA(K_\Di[K])$,
and hence $$\MTh^{K}(\clC,\clR)=\MTh^{K_\Di}(\clC,\clR)$$
 by Definition \ref{def:secondMain}.
Therefore, assuming that $\clC$ and $\clR$
satisfy~(\ref{eq:isom-bisim}), we obtain
the following generalization of our previous
definitions for arbitrary~$K$:

\begin{definition}\label{def:external-sat}
Let $\clC$ and $\clR$ satisfy~(\ref{eq:isom-bisim}).
The {\em modal (Lindenbaum) algebra of a~language~$K$
on $(\clC,\clR)$} is the modal algebra
$({K_\Di/{\approx}},\Di_\approx)$.
The {\em modal theory $\MTh^K(\clC,\clR)$ of $(\clC,\clR)$
in~$K$} is the modal logic of this algebra.
\end{definition}

\hide{\marISH{WRONG!}
Notice that if the $\clR$-satisfiability on $\clC$ is expressible in some $L$, then (\ref{eq:isom-bisim}) holds. Thus,
(\ref{eq:isom-bisim})
 is a necessary and sufficient condition
for $\clR$ to be expressible on $\clC$ in some $L$.
}

Let us emphasize that natural classes of models with
relations between them always satisfy~(\ref{eq:isom-bisim});
in particular, so are the instances discussed in our paper.
One might say that $\clR$ is a~{\em model theoretic
relation on}~$\clC$ iff property~(\ref{eq:isom-bisim}) holds.
E.g., the submodel relation on the class
of models of a given signature is model-theoretic;
hence, Theorem~\ref{thm:sqsup-completeness} remains
true for arbitrary model-theoretic language.
\hide{
Now we can reformulate Theorem~\ref{thm:sqsup-completeness} for arbitrary $K$.
\begin{corollary}\label{cor:first-robust-sub}
Let $K$ be any language  (in particular, the first-order
language $L_{\omega,\omega}$), let $\Omega$ contain
a~functional symbol of arity~$\geq 2$, and let
$\mathcal C$~be the class of all models of~$\Omega$.
Then $\MTh^K(\clC,\supmod)=\lS{4}$ if $\Omega$ has no
constant symbols, and $\MTh^K(\clC,\supmod)=\lS{4.2.1}$
otherwise.
\end{corollary}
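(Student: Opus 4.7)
The strategy is to reduce the claim to Theorem~\ref{thm:sqsup-completeness}, exploiting the fact that the countermodels $\mathfrak{A}_n$ built in Theorem~\ref{thm:models-An} are already distinguished at the first-order level. By Proposition~\ref{prop:submod-quot-exprr}, there exists a~second-order language~$L$ stronger than~$K$ in which the $\supmod$-satisfiability on~$\clC$ is expressible, so Definition~\ref{def:secondMain} applies and $\MTh^K(\clC,\supmod)=\MTh^{\KL}(\clC,\supmod)$; by Proposition~\ref{prop:unspeak} this does not depend on the choice of~$L$. Soundness is then immediate: $\clA(\KL)$ is a~subalgebra of $\clA(L)$, whose modal logic equals $\lS{4}$ (resp.\ $\lS{4.2.1}$) by Theorem~\ref{thm:sqsup-completeness}, and the logic of a~subalgebra only grows.

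For completeness, fix $\mvf\notin\lS{4}$ (resp.\ $\mvf\notin\lS{4.2.1}$). By Proposition~\ref{prop:preetreeS4} and Theorem~\ref{thm:models-An}, there is a~model $\mathfrak{A}\in\clC$ (namely $\mathfrak{A}_n$ or $\mathfrak{A}'_n$ for suitable~$n$) whose lattice of submodels modulo $L$-equivalence is isomorphic to $Q_n$ (resp.\ $Q'_n$). The crucial content of Lemma~\ref{lem:comp-important} is that on $\Sub(\mathfrak{A})$ the relations of isomorphism, $L_{\omega,\omega}$-equivalence, and $L$-equivalence all coincide; since every admissible~$K$ contains $L_{\omega,\omega}$ and is contained in~$L$, $K$-equivalence also coincides with these. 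Moreover, the separating sentences $\chi_{s,k}$ from Lemma~\ref{lem:character} already lie in $L_{\omega,\omega}\subseteq K$. Hence, on the $\supmod$-upward closed subclass $\supmod^{*}(\mathfrak{A})=\Sub(\mathfrak{A})$ there are only finitely many complete $K$-theories---in bijection with $Q_n$ (resp.\ $Q'_n$)---and the subalgebra of $\clA(\KL)$ relativised to this subclass is therefore the full powerset of this finite set. Consequently the modal logic there coincides with the logic of the Kripke frame $Q_n$ (resp.\ $Q'_n$), which refutes~$\mvf$.

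The single nontrivial ingredient---and the main obstacle to writing the proof down cleanly---is the transfer of Corollary~\ref{cor:gener} and Corollary~\ref{prop: finite frame} to the $\KL$-setting: one has to verify that whenever $\clD\subseteq\clC$ is $\supmod$-upward closed, $\MTh^{\KL}(\clC,\supmod)\subseteq\MTh^{\KL}(\clD,\supmod)$, and that when the set of complete $K$-theories of $\clD$ is finite, the corresponding subalgebra coincides with its full powerset. Both reduce to unwinding the definitions of $\clA(\KL)$ and of the refinement frame of theories used in Theorem~\ref{thm:maintoolnew}: restricting to an $\supmod$-closed subclass commutes with passing to the subalgebra generated by $K$-sentences, and separation by $K$-sentences at every pair of distinct complete $K$-theories forces the generated subalgebra to be full when the quotient is finite. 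No further modal or model-theoretic input is required beyond what has already been established in Sections~\ref{sec:defs} and~\ref{sec:sub}.
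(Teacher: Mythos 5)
Your argument is correct, but it takes a noticeably heavier route than the paper's. The paper proves this corollary in one line: since the submodel relation satisfies the bisimulation condition~(\ref{eq:isom-bisim}), the language $K_\Di$ is itself a model-theoretic language (isomorphism-invariant, containing $L_{\omega,\omega}$) in which the $\supmod$-satisfiability is expressible by $\vf\mapsto\Di\vf$; hence Theorem~\ref{thm:sqsup-completeness} applies verbatim with $L=K_\Di$, and $\MTh^K(\clC,\supmod)=\MTh^{K_\Di}(\clC,\supmod)$ holds by Definition~\ref{def:external-sat} (together with Theorem~\ref{thm:maintoolnew}). You instead work with Definition~\ref{def:secondMain} via a second-order extension~$L$, get soundness from the subalgebra inclusion, and for completeness re-run the argument of Theorem~\ref{thm:sqsup-completeness} inside the generated subalgebra $\clA(\KL)$, which forces you to re-establish analogues of Corollary~\ref{cor:gener} (restriction to the $\supmod$-closed class $\Sub(\mathfrak A_n)$ is a surjective modal-algebra homomorphism onto $\clA(\KL)$ relativised there) and of Corollary~\ref{prop: finite frame} (the relativised subalgebra is the full powerset of~$Q_n$ because the separating sentences $\chi_{s,k}$ are first-order). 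Those transfer steps do go through exactly as you sketch, and your version has the merit of isolating the real reason the corollary holds — Lemma~\ref{lem:comp-important} already distinguishes the submodels of $\mathfrak A_n$ at the $L_{\omega,\omega}$ level, so the $K$-generated subalgebra sees the whole frame — whereas the paper's route buries that fact inside the uniform applicability of Theorem~\ref{thm:sqsup-completeness} to arbitrary model-theoretic languages. If you want the short proof, just observe that your two uses of Propositions \ref{prop:unspeak} and the identity $\clA(K_\Di)=\clA(K_\Di[K])$ let you replace $L$ by $K_\Di$ throughout and quote Theorem~\ref{thm:sqsup-completeness} directly.
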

}
\marISH{trivial rudiment from old versions. Why do we need it at all, I forgotten?}

\hide{
\begin{proof}
We have $\MTh^K(\clC,\supmod)=\MTh^{K_\Di}(\clC,\supmod)$ by Definition \ref{def:external-sat} (and, formally,  Theorem \ref{thm:maintoolnew}).
Now the proof follows from Theorem~\ref{thm:sqsup-completeness}.\marISH{new}
\end{proof}
}

\paragraph{Downward L\"owenheim--Skolem theorem}

Our next result provides a~version of the downward
L\"owenheim--Skolem theorem for first-order language
enriched with the modal operator for the extension
relation between models.

For a~cardinal $\kappa$, put
$\clC_{\leq \kappa}=\{\stA\in\clC:|\stA|\leq\kappa\}$.

\begin{theorem}\label{thm:submotapprox}
Let $K$ be $L_{\omega,\omega}$ based on
a~signature~$\Omega$, let $K_\Di$ be $K$ enriched with
the modal operator for the extension relation on
models~$\sqsubseteq$, and let $\clC$ be an elementary (in $K$) class of models. For every $\kappa\geq\omega+|\Omega|$,
the following statements hold:
\begin{enumerate}
\item
Let $X$ be a~set of elements of $\stA\in\clC$,
and let $\lambda$ be a~cardinal such that
$|X|+\kappa\leq\lambda\leq|\stA|$. Then $\stA$~has
a~submodel~$\stB$ of cardinality $\lambda$ such that
$\Th^{K_\Di}(\stA)=\Th^{K_\Di}(\stB)$ and
$\stB$ contains~$X$.
\item
$
\{\Th^{K_\Di}(\stA):\stA\in\clC\}=
\{\Th^{K_\Di}(\stA):\stA\in\clC_{\leq\kappa}\}.
$
\item
The modal algebras of $K$ on $(\clC,\submod)$
and on $(\clC_{\leq\kappa},\submod)$ coincide.
\item
$
\MTh^K(\clC,\submod)=
\MTh^K(\clC_{\leq \kappa},\submod).
$
\end{enumerate}
\end{theorem}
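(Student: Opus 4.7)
The plan is to reduce all four statements to a single main lemma: if $\stB,\stA\in\clC$ and $\stB$~is an elementary submodel of~$\stA$ in~$K$, then $\Th^{K_\Di}(\stB)=\Th^{K_\Di}(\stA)$. Granting this, statement~(1) follows at once from the standard downward L\"owenheim--Skolem theorem for $L_{\omega,\omega}$: pad $X$ with arbitrary elements of~$\stA$ up to cardinality~$\lambda$ and extract an elementary submodel $\stB$ of~$\stA$ of that cardinality containing the padded set, which is possible since $\lambda\geq|X|+\kappa\geq\omega+|\Omega|$. Statement~(2) is~(1) applied with $X=\emptyset$ and $\lambda=\kappa$ when $|\stA|>\kappa$, and the identity otherwise. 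For~(3) one must additionally check that for every $\stA\in\clC_{\leq\kappa}$ and every $K_\Di$-sentence~$\vf$, the satisfaction of~$\vf$ at~$\stA$ is the same whether $\Di$ is interpreted via~$\clC$ or via~$\clC_{\leq\kappa}$: by induction on modal depth, the only nontrivial step uses~(1) to replace any witness $\stE\in\clC$ for $\stA\vDash\Di\chi$ by an elementary submodel~$\stE'$ of~$\stE$ of cardinality~${\leq}\kappa$ extending~$\stA$, which lies in $\clC_{\leq\kappa}$ and still satisfies~$\chi$ by the main lemma. Combined with~(2), this makes the Lindenbaum equivalence on $K_\Di$ over~$\clC$ coincide with that over~$\clC_{\leq\kappa}$, so the two modal algebras are literally equal; statement~(4) is immediate from~(3).

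The main lemma is proved by induction on the modal depth of a $K_\Di$-sentence. The depth-$0$ case is the definition of $\stB$~being elementary in~$\stA$, and the Boolean cases are routine. For the modal step, take~$\Di\psi$ with $\psi$ of depth~$n$ and assume the lemma for depth~$n$. The implication $\stA\vDash\Di\psi\Rightarrow\stB\vDash\Di\psi$ is immediate: any $\stC\in\clC$ with $\stA\submod\stC$ and $\stC\vDash\psi$ also satisfies $\stB\submod\stC$. For the converse, let $\stE\in\clC$ with $\stB\submod\stE$ and $\stE\vDash\psi$. In the language expanded by fresh names for the elements of~$\stA$ and of~$\stE$, identified on the common subset~$\stB$, I apply compactness to the theory $\mathrm{ElDiag}(\stE)\cup\mathrm{Diag}(\stA)$. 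A model~$\stF$ of this theory is an elementary extension of~$\stE$ satisfying $\stA\submod\stF$, and lies in~$\clC$ since $\clC$~is elementary and $\stE\in\clC$. The inductive hypothesis, applied to $\stE$ as an elementary submodel of~$\stF$, yields $\stF\vDash\psi$, so~$\stF$ witnesses $\stA\vDash\Di\psi$ as required.

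The crux is finite consistency of $\mathrm{ElDiag}(\stE)\cup\mathrm{Diag}(\stA)$. A finite fragment reduces to checking, for a conjunction $\theta(\bar b,\bar a)$ of atomic and negated-atomic $\stA$-facts with $\bar b\in\stB$ and $\bar a\in\stA\setminus\stB$, that $\stE\vDash\exists\bar y\,\theta(\bar b,\bar y)$. The existential closure holds in~$\stA$ (witnessed by~$\bar a$), hence in~$\stB$ because $\stB$~is elementary in~$\stA$; the $\stB$-witnesses also lie in~$\stE$, and atomic together with negated-atomic formulas over parameters in~$\stB$ are absolute between $\stB$ and its submodel-extension~$\stE$, so the same witnesses satisfy~$\theta$ in~$\stE$. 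This step --- descending an $\exists_1$-statement to $\stB$-witnesses by elementarity and then transferring to~$\stE$ by the submodel relation --- is the main obstacle; the remainder of the argument is routine compactness and the inductive framework.
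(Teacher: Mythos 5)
Your proof is correct, but it reaches the theorem by a genuinely different route than the paper. The paper's key lemma is stronger: it shows that full elementary \emph{equivalence} $\equiv$ coincides with the $K_\Di$-equivalence $\equiv_\Di$ on $\clC$ (Propositions \ref{prop:equive-str} and \ref{prop:equive-bisim}), by first proving that $\equiv$ is a bisimulation with respect to $\submod$ via the Keisler--Shelah isomorphism theorem (a common ultrapower absorbs both an elementarily equivalent copy of $\stA$ and the given extension of $\stA$), and then running the standard bisimulation induction. You instead prove only what the theorem needs --- that an \emph{elementary submodel} pair $\stB\preceq\stA$ in $\clC$ has the same $K_\Di$-theory --- by induction on modal depth, handling the hard direction of the modal step with the elementary amalgamation theorem: the finite consistency of $\mathrm{ElDiag}(\stE)\cup\mathrm{Diag}(\stA)$ over the common part $\stB$ is exactly the standard descend-by-elementarity-then-transfer-quantifier-free argument, and it is right. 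Your approach buys a more elementary proof (compactness and diagrams only, no ultrapowers) at the cost of a weaker byproduct: the paper's lemma also tells you that elementarily equivalent models neither of which embeds in the other share their $K_\Di$-theory, which is what makes $\equiv$ itself a bisimulation and feeds the surrounding discussion in Section~\ref{sec:upanddown}. A further small point in your favour: for statement (3) you explicitly verify, by a second induction on modal depth, that satisfaction of $K_\Di$-sentences at models of $\clC_{\leq\kappa}$ is the same whether $\Di$ ranges over extensions in $\clC$ or only in $\clC_{\leq\kappa}$; the paper's proof passes over this relativization issue more quickly when identifying the two minimal frames of theories.
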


\hide{We say that an equivalence $\equiv$ on $\clC$
is a {\em bisimulation}
{\em
w.r.t.~$\clR$ on $\clC$} if
for all $\stA,\stA',\stB\in\clC$ we have
\begin{equation}\label{eq:equive-bisim}
\stA\equiv\stA'\,\&\,\stA\clR\stB\;\Rightarrow\; \EE \stB'\in\clC \, (\stB'\equiv\stB\;\&\; \stA'\clR\stB').
\end{equation}
}

First, we provide a~general observation about any $\clC$
and $\clR$ satisfying the condition~(\ref{eq:isom-bisim}),
and arbitrary $K$.
Let $\equiv$ be the $K$-equivalence on $\clC$, and
let $\equiv_\Di$ be the $K_\Di$-equivalence on~$\clC$.
Hence, $\stA\equiv\stB$ means that
$\Th^K(\stA)=\Th^K(\stB)$, and $\stA\equiv_\Di\stB$
that $\Th^{K_\Di}(\stA)=\Th^{K_\Di}(\stB)$.
We recall that $\Th^{K_\Di}(\stA)$ depends not
only on $\stA$ but also on $\clC$ and~$\clR$.

\begin{proposition}\label{prop:equive-str}
Assume that $\clC$ and $\clR$
satisfy~ (\ref{eq:isom-bisim}). Let $\equiv$~be
a~bisimulation w.r.t.~$\clR$ on $\clC$, i.e.,
for all $\stA,\stA',\stB\in\clC$ we have
\begin{equation}\label{eq:equive-bisim}
\stA\equiv\stA'\;\&\;\stA\:\clR\:\stB
\;\Rightarrow\;
\EE\stB'\in\clC\;
(\stB'\equiv\stB\;\&\;\stA'\,\clR\,\stB').
\end{equation}
Then $\equiv$ and $\equiv_\Di$ coincide on~$\clC$.
\end{proposition}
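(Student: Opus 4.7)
The plan is to prove two inclusions, $\equiv_\Di \sbq \equiv$ and $\equiv \sbq \equiv_\Di$. The first is immediate: every $K$-sentence is already a~$K_\Di$-sentence (take $\mvf$ to be a~single propositional variable), so $\Th^{K_\Di}(\stA)=\Th^{K_\Di}(\stB)$ forces $\Th^K(\stA)=\Th^K(\stB)$. The assumption~(\ref{eq:isom-bisim}) is used here only to guarantee that $K_\Di$ is a~model-theoretic language on $\clC$ in the sense of the paper, so that $\equiv_\Di$ makes sense.

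For the nontrivial inclusion $\equiv \sbq \equiv_\Di$, I will prove by induction on the modal formula $\mvf(\mathsf p_1,\ldots,\mathsf p_n)$ the following statement: for all $\stA,\stB\in\clC$ with $\stA\equiv\stB$ and all $K$-sentences $\psi_1,\ldots,\psi_n$,
\[
\stA\mo\mvf(\psi_1,\ldots,\psi_n)
\;\Iff\;
\stB\mo\mvf(\psi_1,\ldots,\psi_n).
\]
Since every $K_\Di$-sentence has this form, this gives $\equiv \sbq \equiv_\Di$.

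The base case ($\mvf=\mathsf p_i$) reduces to $\stA\mo\psi_i\Iff\stB\mo\psi_i$, which holds because $\stA\equiv\stB$ means $\Th^K(\stA)=\Th^K(\stB)$. The Boolean cases are routine. For the modal case $\mvf=\Di\mvf'$, suppose $\stA\mo\Di\mvf'(\vec\psi)$ witnessed by some $\stC\in\clC$ with $\stA\,\clR\,\stC$ and $\stC\mo\mvf'(\vec\psi)$. Applying the bisimulation property~(\ref{eq:equive-bisim}) to $\stA\equiv\stB$ and $\stA\,\clR\,\stC$ yields $\stC'\in\clC$ with $\stC'\equiv\stC$ and $\stB\,\clR\,\stC'$. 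The induction hypothesis gives $\stC'\mo\mvf'(\vec\psi)$, hence $\stB\mo\Di\mvf'(\vec\psi)$. The converse implication is symmetric, since $\equiv$ is an equivalence relation, so~(\ref{eq:equive-bisim}) may be applied with the roles of $\stA$ and $\stB$ swapped.

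There is no real obstacle: the argument is a~standard modal-logical bisimulation invariance proof. The only conceptual point worth flagging is that the ``atomic'' level here consists not of individual propositional letters but of full $K$-sentences, so the base case of the induction is precisely the hypothesis that $\stA$ and $\stB$ are $K$-equivalent; after that, the bisimulation clause~(\ref{eq:equive-bisim}) handles each $\Di$, exactly as in the classical Hennessy--Milner style argument.
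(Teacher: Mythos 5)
Your proof is correct and follows essentially the same route as the paper: the paper's proof is precisely the "standard bisimulation argument," an induction on the modal formula $\mpsi(\mpv_1,\ldots,\mpv_n)$ showing that $\stA\equiv\stA'$ implies $\stA\mo\mpsi(\vf_1,\ldots,\vf_n)\Iff\stA'\mo\mpsi(\vf_1,\ldots,\vf_n)$ for all $K$-sentences $\vf_i$, with the bisimulation clause handling the $\Di$ step. Your write-up just supplies the details (base case, Boolean cases, symmetry of $\equiv$, and the trivial inclusion $\equiv_\Di\sbq\equiv$) that the paper leaves implicit.
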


\extended{\marISH{Can it be generalized for
the refinement?}}

\begin{proof}
By the standard bisimulation argument:
an easy induction on the construction
of $\mpsi(\mpv_1,\ldots,\mpv_n)$ shows that,
whenever $\stA\equiv\stA'$ and
$\vf_1,\ldots,\vf_n\in K_s$ then we have\,
$
\stA\mo\mpsi(\vf_1,\ldots,\vf_n)
\,\Iff\,
\stA'\mo\mpsi(\vf_1,\ldots,\vf_n).
$
\end{proof}

\begin{proposition}\label{prop:equive-bisim}
Let $K=L_{\omega,\omega}$. If $\clC$ is an
elementary class, then $\equiv$ is a~bisimulation
w.r.t.~$\sqsubseteq$ on~$\clC$.
\end{proposition}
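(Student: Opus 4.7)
The plan is a standard compactness argument via the universal fragment of $\Th^K(\stB)$. Suppose $\stA,\stA',\stB\in\clC$ with $\stA\equiv\stA'$ in $L_{\omega,\omega}$ and $\stA\submod\stB$; I must produce $\stB'\in\clC$ satisfying $\stA'\submod\stB'$ and $\stB'\equiv\stB$.

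First, since universal first-order sentences are preserved under passage to submodels, $\stA\submod\stB$ implies that every universal sentence true in $\stB$ is true in $\stA$. Because $\stA\equiv\stA'$, every such sentence is also true in $\stA'$; hence the universal fragment of $\Th^K(\stB)$ is contained in $\Th^K(\stA')$.

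The next step is to invoke the classical embedding lemma: a structure embeds into some model of a first-order theory $T$ if and only if every universal consequence of $T$ holds in that structure. Applied to $T=\Th^K(\stB)$ and to $\stA'$, this yields an embedding of $\stA'$ into some $\stB^\ast\vDash\Th^K(\stB)$. Replacing $\stB^\ast$ by an isomorphic copy $\stB'$ in which this embedding becomes the literal inclusion of universes, I obtain $\stA'\submod\stB'$ together with $\stB'\equiv\stB$ in~$K$. Since $\clC$ is elementary in $K$ and $\stB\in\clC$, the equivalence $\stB'\equiv\stB$ forces $\stB'\in\clC$, which verifies~(\ref{eq:equive-bisim}).

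The embedding lemma itself is a textbook consequence of compactness applied to $\Th^K(\stB)\cup D(\stA')$, where $D(\stA')$ is the quantifier-free diagram of $\stA'$ in the signature $\Omega$ expanded by a fresh constant for every element of $\stA'$: the consistency of this theory reduces, by Robinson-style compactness, precisely to the inclusion of universal consequences established in the first step. I therefore expect no genuine obstacle; the entire argument is routine first-order model theory, with the elementarity of~$\clC$ used only at the final step to keep $\stB'$ inside~$\clC$.
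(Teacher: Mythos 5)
Your proof is correct, but it takes a genuinely different route from the paper's. The paper argues via the Keisler--Shelah isomorphism theorem: from $\stA\equiv\stA'$ it extracts an ultrafilter $D$ with $\stA_D\isom\stA'_D$, notes that $\stA\submod\stB$ forces $\stA_D$ to embed into $\stB_D$, chains the embeddings $\stA'\hookrightarrow\stA'_D\isom\stA_D\hookrightarrow\stB_D$, and takes $\stB'$ to be a suitable isomorphic copy of $\stB_D$. You instead observe that the universal fragment of $\Th^K(\stB)$ passes down to $\stA$ and hence over to $\stA'$, and then invoke the embedding lemma (compactness applied to $\Th^K(\stB)$ together with the quantifier-free diagram of $\stA'$) to embed $\stA'$ into a model of the complete theory $\Th^K(\stB)$. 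Both arguments are sound and both end with the same cosmetic step of replacing an embedding by a literal inclusion and using elementarity of $\clC$ to keep $\stB'$ inside it. Your route is the more elementary one: it uses only compactness, whereas Keisler--Shelah is a substantially deeper theorem. What the paper's route buys is coherence with the surrounding material --- the next proposition and remark there concern relations image-closed under ultraproducts and the generalization to $L_{\kappa,\lambda}$ for strongly compact $\kappa$, so the ultrapower machinery is already on the table --- and it yields slightly more, namely a $\stB'$ isomorphic to an ultrapower of $\stB$ rather than merely elementarily equivalent to it; for the statement as given this extra strength is not needed. One terminological quibble: what you use is plain compactness with a diagram, not really a ``Robinson-style'' joint-consistency argument.
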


\begin{proof}
Pick $\stA,\stA',\stB$ in~$\clC$.
If $\stA\equiv\stA'$, then there exists an
ultrafilter~$D$ such that the ultrapowers
$\stA_D$ and $\stA'_D$ are isomorphic,
due to the Keisler--Shelah isomorphism theorem
(see, e.g., \cite[Theorem 6.1.15]{chang1990model}).
If $\stA\submod\stB$, then $\stA_D$~is embeddable
in~$\stB_D$, the ultrapower of~$\stB$. Since
$\stA'$~is embeddable in~$\stA'_D$, we obtain that
$\stA'$ is embeddable in~$\stB_D$, and thus
$\stA'\submod\stB'$ for some $\stB'\isom\stB_D$.
But then we have $\stB'\in\clC$ and $\stB'\equiv\stB$,
which completes the proof.
\end{proof}

\begin{proof}[Proof of Theorem \ref{thm:submotapprox}]
By Propositions \ref{prop:equive-str}
and~\ref{prop:equive-bisim}, $\equiv$ and $\equiv_\Di$
coincide on~$\clC$, i.e., for all $\stA',\stA$ in~$\clC$
we have:
\begin{equation}\label{eq:cool1}
\text{$\stA'$ and $\stA$ are $K$-equivalent}
\;\Iff\;
\textrm{$\stA'$ and $\stA$ are $K_\Di$-equivalent}.
\end{equation}
Now the first statement of the theorem follows from the
downward L\"owenheim--Skolem theorem for
the first-order case:
$\stA$ has an elementary submodel~$\stB$ of
cardinality~$\lambda$ such that
$\stB$ contains~$X$
(see, e.g., \cite[Theorem 3.1.6]{chang1990model});
hence $\stA\equiv_\Di\stB$ by~(\ref{eq:cool1}).

Let $(\Ths,\submod_\Ths,\clA)$ and
$(\Ths^\kpp,\submod^\kpp_\Ths,\clA^\kpp)$ be
 the minimal frames of theories
 for $\clC$, $\submod$, $K_\Di$ and
  for $\clC_{\leq \kappa}$, $\submod$, $K_\Di$, respectively.
Let us show that these two structures are equal.

We have $\Ths=\Ths^\kpp$, the second statement of the theorem, as an immediate corollary of the first one.
\hide{\marISH{Obsolete}
Suppose that $T\in\Ths$, that is, we have $T=\Th^{K_\Di}(\stA)$ for some $\stA\in\clC$.
We have $\stA'\equiv \stA$ for some countable $\stA'\in\clC$. By (\ref{eq:cool1}),
$\stA'$ and $\stA$ are $K_\Di$-equivalent. Thus $T\in\Ths^\kpp$.
It follows that $\Ths=\Ths^\kpp$.}

Suppose that $T_1\submod_\Ths T_2$, that is,
$T_1=\Th^{K_\Di}(\stA)$ and $T_2=\Th^{K_\Di}(\stB)$
for some $\stA,\stB\in\clC$ with $\stA\submod\stB$.
Then $\stA$ has a~submodel~$\stA'$ of
cardinality~$\leq\kappa$ with $\stA'\equiv_\Di\stA$
(indeed, if the cardinality of $\stA$ is less than~$\kappa$
then we put $\stA'=\stA$, otherwise we use the first
statement of the theorem). Likewise, $\stB$ has
a~submodel~$\stB'$ of cardinality $\leq\kappa$ such that
$\stB'\equiv_\Di\stB$ and $\stB'$ contains the universe
of~$\stA'$.
Clearly, $\stA'$ is submodel of $\stB'$.
It follows that $T_1\submod^\kpp_\Ths T_2$.
Hence $\submod^\kpp$ includes $\submod$.
The converse inclusion is obvious, so
$\submod^\kpp$ equals~$\submod$.

Let $\vf\in K_\Di$. Since $\Ths=\Ths^\kpp$,
it is immediate that
$\{T\in\Ths:\vf\in T\}=\{T\in\Ths^\kpp:\vf\in T\}$.
Hence, $\clA^\kpp=\clA$.

Thus
$
(\Ths,\submod_\Ths,\clA)=
(\Ths^\kpp,\submod^\kpp_\Ths,\clA^\kpp)
$.
Now the third and consequently, the forth, statements
are immediate from Definition \ref{def:external-sat}
and Theorem~\ref{thm:minmax-ths}.
\end{proof}

This result is based on the interplay between the downward L\"owenheim--Skolem property of first-order language and the fact
that $\equiv$ is a bisimulation w.r.t. $\sqsubseteq$.
The property of $\equiv$ being a bisimulation w.r.t.~a~given~$\mathcal R$
seems interesting enough per se. 
Contrary to the case of the relation $\submod$ (Proposition \ref{prop:equive-bisim}), we have:
\extended{\marISH{
\newISH{Does it follow that
$\MTh^K(\clC,\submod)$ is Kripke complete?
}
Denis: Seems yes for the robust case --- we have image-set now.
}}

\begin{proposition}
Let $\Omega$~contain a~predicate symbol of
arity~$\ge2$. Then
the (usual) elementary equivalence
is not a bisimulation
w.r.t.~$\sqsupseteq$ on the class of models
of~$\Omega$.
\end{proposition}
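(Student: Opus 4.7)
The plan is to exhibit models $\stA,\stA',\stB$ in the class $\clC$ of all $\Omega$-models with $\stA\equiv\stA'$ and $\stA\supmod\stB$, but such that no submodel $\stB'$ of $\stA'$ is elementarily equivalent to $\stB$. Following the reduction used in the proof of Proposition~\ref{prop:submod-non-exprr}, I may assume that $\Omega$ contains a binary predicate symbol, which I denote by $<$; the distinguishing first-order sentence will be the existence of a $<$-least element.

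I take $\stA'$ to have universe $\omega+1$ with $<$ the natural order; all other predicate symbols are interpreted as empty, all function symbols as projection onto the first argument, and all constant symbols as the $<$-maximum $\omega$. By compactness applied to the elementary diagram of $\stA'$ together with countably many new constants $d_n$ and the axioms $d_{n+1}<d_n$, $d_n<c$, and $\bar m<d_n$ for all $m,n<\omega$ (where $c$ names the $<$-maximum and $\bar m$ names the standard naturals), I obtain a countable elementary extension $\stA$ of $\stA'$ containing a strictly descending chain $d_0>d_1>d_2>\ldots$ of non-standard elements lying below $c^{\stA}$. Finite consistency is immediate: interpret the finitely many $d_{n_i}$ involved as distinct standard naturals arranged in the required decreasing order, chosen larger than any $\bar m$ appearing in the fragment. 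Let $\stB$ be the submodel of $\stA$ whose universe is $\{c^{\stA}\}\cup\{d_n:n<\omega\}$; it is closed under the projections and contains every constant interpretation, hence is a legitimate submodel.

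Every element of $\stB$ has a strictly $<$-smaller element of $\stB$: $c^{\stA}$ has $d_0$ below it, and each $d_n$ has $d_{n+1}$ below it. Thus $\stB\vDash\forall x\,\exists y\,(y<x)$, so $\stB$ has no $<$-least element. On the other hand, every non-empty submodel $\stB'$ of $\stA'$ has universe a non-empty subset of the well-ordered set $(\omega+1,<)$, so $\stB'\vDash\exists x\,\forall y\,\neg(y<x)$. Therefore $\stB\not\equiv\stB'$ for every submodel $\stB'$ of $\stA'$, while $\stA\equiv\stA'$ and $\stA\supmod\stB$, which shows that elementary equivalence is not a bisimulation w.r.t.~$\supmod$ on $\clC$. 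The main subtlety is ensuring that $\stB$ is a genuine submodel in the presence of constants and functions; the device of interpreting constants as the $<$-top (which $\stB$ contains without disturbing the absence of a minimum) and functions as projections onto the first argument, as in Proposition~\ref{prop:submod-non-exprr}, takes care of this uniformly.
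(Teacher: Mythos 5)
Your proof is correct, but it takes a genuinely different route from the paper's. The paper works with pure linear orders: it takes $\mathfrak A=\mathbb Q\cdot\mathbb Z$, $\mathfrak A'=\mathbb Z$, $\mathfrak B=\mathbb Q$, establishes $\mathfrak A\equiv\mathfrak A'$ by an Ehrenfeucht--Fra\"iss\'e game, notes that $\mathbb Q$ embeds in $\mathbb Q\cdot\mathbb Z$, and concludes by observing that any $\mathfrak B'\equiv\mathbb Q$ is a dense order without endpoints and hence cannot embed in $\mathbb Z$. You instead build the elementarily equivalent pair by compactness: $\mathfrak A'=\omega+1$ and $\mathfrak A$ a countable elementary extension containing an infinite descending chain of nonstandard elements below the top, with the chain (plus the top) serving as the offending submodel $\mathfrak B$; the separating invariant is the existence of a $<$-least element rather than density. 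Both arguments handle arbitrary $\Omega$ by the same expansion device (empty predicates, projection functions, constants at the top), and both are sound. The paper's version buys complete explicitness --- all three structures are concrete and no compactness is needed, at the price of invoking an EF-game equivalence; yours is closer in spirit to the well-foundedness argument of Proposition~\ref{prop:submod-non-exprr} and makes the failure of the back condition especially transparent (every nonempty subset of a well-order has a least element), at the price of passing through a nonstandard model. One small point worth making explicit in your write-up: $c^{\mathfrak A}$ remains the $<$-maximum of $\mathfrak A$ because this is part of the elementary diagram of $\mathfrak A'$, which is what guarantees $d_0<c^{\mathfrak A}$ and hence that $c^{\mathfrak A}$ is not $<$-minimal in $\mathfrak B$; you do list $d_n<c$ among the axioms, so this is covered, but it is the load-bearing step.
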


\begin{proof}
Assume w.l.g.~$\Omega$ contains a~single binary
predicate symbol~${\le}$, and let $\equiv$~denote
$\equiv_{L_{\omega,\omega}}$.
Let $\mathfrak A,\mathfrak A',\mathfrak B$ be
the linearly ordered sets $\mathbb Q\cdot\mathbb Z$,
$\mathbb Z$, $\mathbb Q$, respectively (where
$\,\cdot\,$~denotes the lexicographical multiplication).
We have $\mathfrak A\equiv\mathfrak A'$
(this fact
can be established by using an Eurenfeucht--Fra\"iss\'e
game, see, e.g.,
\cite[Proposition 2.4.10]{marker2002modelIntro})
and  $\stB$ is embeddable in $\stA$.
However, every $\mathfrak B'$ satisfying
$\mathfrak B'\equiv\mathfrak B$ is a~dense
linearly ordered set without end-points.
Clearly, no such~$\mathfrak B'$ can at the same
time
be embeddable in $\mathfrak A'$.
\end{proof}

Let us say that $\clR$~is {\em image-closed
under ultraproducts} iff for every $\mathfrak A$,
$(\mathfrak B_i)_{i\in I}$, and ultrafilter~$D$
over~$I$, if $\mathfrak A\,\clR\,\mathfrak B_i$
for all $i\in I$ then
$\mathfrak A\,\clR\prod_D\mathfrak B_i$
(e.g., $\mathfrak A\,\mathcal R\,\mathfrak B$
may mean  that, up to isomorphism, $\mathfrak B$~is
an extension~of~$\mathfrak A$).

\begin{proposition}
Let $\clC$ and $\clR$ satisfy~$(\ref{eq:isom-bisim})$,
let $\clR$~be image-closed under ultraproducts on~$\clC$,
and let $K=L_{\omega,\omega}$.
If $\equiv$ and $\equiv_\Di$ coincide on~$\clC$, then
$\equiv$ is a~bisimulation w.r.t.~$\clR$ on~$\clC$.
\end{proposition}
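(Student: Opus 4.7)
The plan is to combine the hypothesis $\equiv\,=\,\equiv_\Di$ with the image-closedness under ultraproducts via a compactness-style argument using \L o\'s's theorem. So suppose $\stA\equiv\stA'$ and $\stA\,\clR\,\stB$; I want to produce $\stB'\in\clC$ with $\stA'\,\clR\,\stB'$ and $\stB'\equiv\stB$.

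First, I would translate the relation $\stA\,\clR\,\stB$ into the modal object language $K_\Di$. For every $\vf\in\Th^K(\stB)$, the sentence $\Di\vf$ holds in $\stA$. Since $\stA\equiv_\Di\stA'$ by the assumption that $\equiv$ and $\equiv_\Di$ coincide, the sentence $\Di\vf$ also holds in $\stA'$; since $\Di$ is closed under conjunctions inside $\vf$, the same is true for any finite conjunction $\bigwedge\Delta$ with $\Delta\subseteq\Th^K(\stB)$ finite. Unpacking $\Di$, for each finite $\Delta\subseteq\Th^K(\stB)$ I obtain some $\stB'_\Delta\in\clC$ with $\stA'\,\clR\,\stB'_\Delta$ and $\stB'_\Delta\vDash\bigwedge\Delta$.

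Next, I would glue these finite approximations into a single model. Let $I$ be the set of finite subsets of $\Th^K(\stB)$; for each $\vf\in\Th^K(\stB)$ set $I_\vf=\{\Delta\in I:\vf\in\Delta\}$. The family $\{I_\vf:\vf\in\Th^K(\stB)\}$ has the finite intersection property, so it extends to an ultrafilter $D$ on $I$. Put $\stB'=\prod_D\stB'_\Delta$. By \L o\'s's theorem, for every $\vf\in\Th^K(\stB)$ the set $\{\Delta:\stB'_\Delta\vDash\vf\}\supseteq I_\vf$ lies in $D$, hence $\stB'\vDash\vf$. Therefore $\Th^K(\stB)\subseteq\Th^K(\stB')$, which by completeness of first-order theories gives $\stB'\equiv\stB$.

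It remains to verify $\stA'\,\clR\,\stB'$. Since $\stA'\,\clR\,\stB'_\Delta$ holds for all $\Delta\in I$, the image-closedness of $\clR$ under ultraproducts yields $\stA'\,\clR\prod_D\stB'_\Delta=\stB'$ (and in particular $\stB'\in\clC$). So $\stB'$ is the required witness. The only nontrivial step is passing from the family of finite witnesses $\stB'_\Delta$ to a single $\stB'$ compatible with $\stA'$; this is precisely where the image-closedness under ultraproducts is needed, and without it the compactness reduction would only supply a model $K$-elementarily equivalent to $\stB$ with no guarantee of being $\clR$-related to $\stA'$.
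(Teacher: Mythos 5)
Your proof is correct and follows essentially the same route as the paper's: transfer each $\Di\bigwedge\Delta$ from $\stA$ to $\stA'$ via $\equiv_\Di\,=\,\equiv$, pick witnesses $\stB'_\Delta$ for finite $\Delta\subseteq\Th^K(\stB)$, take their ultraproduct along a fine ultrafilter on the finite subsets, and invoke image-closedness under ultraproducts to keep the $\clR$-link to $\stA'$. The only cosmetic difference is that you name \L o\'s's theorem explicitly where the paper leaves it implicit.
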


\begin{proof}
Let $\mathfrak A,\mathfrak A',\mathfrak B$ in~$\clC$
be such that $\mathfrak A\equiv\mathfrak A'$ and
$\mathfrak A\,\clR\,\mathfrak B$.
Observe that for all $\varphi\in K_s$ we have:
\begin{align*}
\mathfrak B\vDash\varphi
&\;\Rightarrow\;
\mathfrak A\vDash\Di\varphi
\\
&\;\Rightarrow\;
\mathfrak A'\vDash\Di\varphi
\;\;\;(\text{since }\mathfrak A\equiv_\Di\mathfrak A'\,)
\\
&\;\Rightarrow\;
\mathfrak A'\,\clR\,\mathfrak B_\varphi
\;\&\;
\mathfrak B_\varphi\vDash\varphi
\text{ for some }\mathfrak B_\vf\in\clC.
\end{align*}
Let $T=Th^K(\mathfrak B)$.
For any $\varGamma\in\mathcal P_{\omega}(T)$
choose a~model~$\mathfrak B_{\varGamma}$ in~$\clC$
as in the observation above, i.e., such that
$\mathfrak A'\,\clR\,\mathfrak B_{\varGamma}$ and
$\mathfrak B_{\varGamma}\vDash\bigwedge\varGamma$.
Pick any ultrafilter~$D$ extending the centered
family of sets
$
S_\varphi=
\{\varGamma\in\mathcal P_{\omega}(T):
\varphi\in\varGamma\}
$
for all $\varphi\in T$ (i.e., a~{\em fine}
ultrafilter over $\mathcal P_{\omega}(T)$).
Then the ultraproduct
$\mathfrak B'=\prod_D\mathfrak B_\varGamma$ satisfies
all $\varphi\in T$, thus $\mathfrak B'\vDash T$.
Moreover, since $\clR$~is image-closed
under ultraproducts, we have
$\mathfrak A'\,\clR\,\mathfrak B'$.
This completes the proof.
\end{proof}

\begin{remark}
It suffices to assume that $\clR$~is image-closed
under ultraproducts by ultrafilters over~$|K|$ only. The proposition remains true for $K=L_{\kappa,\lambda}$
with any strongly compact $\kappa$.
\end{remark}

\hide{
\marISH{Something wrong with it, see  reviewer's comment 21}
\begin{example}
Let $\Omega$~consist of unary predicate symbols
and possibly some constant symbols. It can be shown
that the language $L_{\omega,\omega}$
expresses the $\sqsupseteq$-satisfiability on $\Omega$-models. Therefore, the (usual)
elementary equivalence is a bisimulation
w.r.t.~$\sqsupseteq$ on any elementary class~$\clC$
of models of~$\Omega$ (e.g., of all these models).
\end{example}
}

\section{Robustness and Kripke completeness}\label{sec:rob}

The logics of submodels calculated in Section \ref{sec:sub} do not depend on
choosing particular language (Theorem \ref{thm:sqsup-completeness}).
However, in general,
modal theories depend on $L$.
\begin{example}\label{ex:non-rob1}
Let $L$ be the first-order language $L_{\omega,\omega}$, $T$ the theory of dense linear orders without end-points.
Trivially,  the $\supmod$-satisfiability is expressible on $\clC=\Mod(T)$ in $L$: put $f(\vf)=\vf$.
Then $\MTh^L(\clC,\supmod)$ contains the ``trivial'' formula $\mathsf{p} \leftrightarrow \Di \mathsf{p}$.
Obviously, this formula is falsified if $L$ is second-order.
\end{example}

Henceforth we assume that $\clC$ and $\clR$
satisfy (\ref{eq:isom-bisim}).

\begin{proposition}\label{prop:language-monot}
Let $L$ and $K$ be two languages.
Then $L\subseteq K$ implies
$
\mathrm{MTh}^{L}(\mathcal C,\mathcal R)
\supseteq
\mathrm{MTh}^{K}(\mathcal C,\mathcal R).
$
\end{proposition}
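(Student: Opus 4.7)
The plan is to identify $\MTh^L(\clC,\clR)$ and $\MTh^K(\clC,\clR)$ with the modal logics of their respective Lindenbaum modal algebras via Definition~\ref{def:external-sat}, and then exhibit an embedding of modal algebras $(L_\Di/{\approx},\Di_\approx)\hookrightarrow(K_\Di/{\approx},\Di_\approx)$. Once this is in place, the standard fact that modal validity is preserved downward along subalgebras (every valuation into a subalgebra is in particular a valuation into the whole algebra, with the same inductive interpretation of $\upvarphi\in\MF$) immediately yields $\MLog(K_\Di/{\approx})\subseteq\MLog(L_\Di/{\approx})$, which is exactly the desired inclusion.

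For the embedding I would argue as follows. The hypothesis $L\subseteq K$ gives the syntactic containment $L_s\subseteq K_s$, and hence $L_\Di\subseteq K_\Di$ as sets of modal $K$-sentences. The equivalence $\vf\approx\psi$ is defined by $\clC\vDash\vf\lra\psi$, a purely semantic condition depending only on $\clC$, not on the ambient language. Consequently, for $\vf,\psi\in L_\Di$, the two a~priori possible readings of $\vf\approx\psi$ (one inside $L_\Di$, one inside $K_\Di$) coincide, so the natural map $[\vf]_\approx^{L}\mapsto[\vf]_\approx^{K}$ is both well-defined and injective. That it preserves the Boolean connectives and $\Di_\approx$ is immediate from the inductive definition of these operations on quotients, since $\Di$ in $L_\Di$ is the same symbol as $\Di$ in $K_\Di$.

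I do not foresee a substantive obstacle here: the whole argument rests on the fact that $\approx$ is determined by truth in $\clC$, hence is language-independent. The only thing worth remarking is that the assumption (\ref{eq:isom-bisim}) which we carry throughout Section~\ref{sec:rob} is used only to ensure that $\MTh^L$ and $\MTh^K$ are well-defined in the sense of Definition~\ref{def:external-sat}; it plays no further role in the monotonicity argument itself.
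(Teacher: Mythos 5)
Your proof is correct and follows essentially the same route as the paper, which simply observes that by Definition~\ref{def:external-sat} the modal algebra $(L_\Di/{\approx},\Di_\approx)$ of $L$ on $(\clC,\clR)$ is (up to the natural identification you spell out) a subalgebra of that of $K$, so that validity passes down to it. Your elaboration of why the embedding $[\vf]_\approx^{L}\mapsto[\vf]_\approx^{K}$ is well defined, injective, and operation-preserving — resting on the language-independence of $\approx$ and of the semantics of $\Di$ — is exactly the content the paper leaves implicit.
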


\begin{proof}
Follows from Definition \ref{def:external-sat},
since on $(\clC,\clR)$,  the modal algebra of~$L$
is a subalgebra of the algebra of~$K$.
\end{proof}

One can think that
$L$~describes the properties of $(\mathcal C,\mathcal R)$
in a~robust
way whenever the modal theory does not change under strengthening the language.
Thus,
we shall say that $\MTh^L(\mathcal C,\mathcal R)$
is {\em robust\/} iff for every language $K\supseteq L$
we have
$$
\mathrm{MTh}^{K}(\mathcal C,\mathcal R)=
\mathrm{MTh}^{L}(\mathcal C,\mathcal R).
$$
Intuitively, the robust theory can be considered as
a~``true'' modal logic of the model-theoretic relation
$\mathcal R$ on~$\mathcal C$.

\hide{
\marISH{Obsolete:}
It follows that Theorem~\ref{thm:sqsup-completeness} describes robust theories of $\supmod$
(by Proposition \ref{prop:language-monot} and the soundness  Theorem
\ref{thm:sqsup-soundness}).\marISH{Plural is confusing here.}
}

\smallskip
By Definition \ref{def:external-sat} and Theorem \ref{thm:maintoolnew},
modal theories are logics of general frames.  The following construction shows that,
under certain assumptions, robust theories are logics of  Kripke frames (i.e., they are {\em Kripke complete}).

A~relation~$\clS$ is said to be {\em image-set}
iff for every~$\stA$ the image
$\clS(\stA)=\{\stB:\stA\,\clS\,\stB\}$
is a~set; $\clS$~is {\em image-set on}~$\clC$ iff
its restriction to~$\clC$ is image-set.
Recall that $\mathfrak A\isom\mathfrak B$ means that
$\mathfrak A$ and~$\mathfrak B$ are isomorphic, and
that $\,[\stA]_\isom\clR_\isom[\stB]_\isom$ iff
$
\exists\,\mathfrak A'\isom\mathfrak A\,
\exists\,\mathfrak B'\isom\mathfrak B\;
(\mathfrak A'\,\mathcal R\,\mathfrak B').
$
Let $\mathcal C_\isom$ abbreviate $\mathcal C/{\isom}$.  Also, recall  that $\clR^*(\stA)$ denotes
$\bigcup_{n<\omega}\clR^n(\stA)$, the least
$\clR$-closed $\clD\subseteq\clC$ containing~$\stA$.

\begin{proposition}\label{prop:sup-isom}
If $\clR_{\isom}$ is image-set on~$\clC_\isom$,
then
\begin{center}
$\MTh^L(\clC,\clR) \supseteq
\,\bigcap\limits_{\mathfrak A\in\clC}
\MLog(\clR^*(\mathfrak A)_{\isom},\clR_\isom).$
\end{center}
\end{proposition}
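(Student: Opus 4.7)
The plan is to reduce the claim to Corollary~\ref{cor:gener} together with the quotient identity~(\ref{eq:quot}), and then to compare Kripke and general-frame validity on the quotient. First, by Definition~\ref{def:external-sat}, $\MTh^L(\clC,\clR) = \MTh^{L_\Di}(\clC,\clR)$, so I may work in the language $L_\Di$ in which the $\clR$-satisfiability on $\clC$ is expressible. Since $\clR^*(\stA)$ is $\clR$-upward closed in $\clC$, Proposition~\ref{prop:extendingexpr}(1) ensures that the $\clR$-satisfiability is still expressible on it, and Corollary~\ref{cor:gener} yields
$$\MTh^L(\clC,\clR) = \bigcap_{\stA\in\clC} \MTh^{L_\Di}(\clR^*(\stA),\clR).$$

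Next, I would apply the quotient equation~(\ref{eq:quot}) to each $\stA$, with $\sim$ taken to be $\isom$ (and $\filtR$ taken to be $\clR_\isom$, which by~(\ref{eq:isom-bisim}) sits between $\clR_\isom$ and $\Rsimmax$). This identifies $\MTh^{L_\Di}(\clR^*(\stA),\clR)$ with the logic of the general frame $(\clR^*(\stA)_\isom,\clR_\isom,\clV_\isom)$. At this point the image-set hypothesis becomes essential: combined with~(\ref{eq:isom-bisim}), it guarantees that $\clR^*(\stA)_\isom=\bigcup_{n<\omega}\clR_\isom^n([\stA]_\isom)$ is a genuine \emph{set} (a countable union of image-sets), so one may speak of its Kripke logic in the ordinary sense, without invoking the class-frame machinery of Remark~\ref{rem:claclaclassGeneral}. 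Since $\clV_\isom$ is a subalgebra of the powerset of $\clR^*(\stA)_\isom$, shrinking the valuation algebra only enlarges the set of valid formulas, so
$$\MLog(\clR^*(\stA)_\isom,\clR_\isom) \subseteq \MLog(\clR^*(\stA)_\isom,\clR_\isom,\clV_\isom).$$
Intersecting over $\stA\in\clC$ and chaining the two identifications above then delivers the desired inclusion.

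The only step that is not pure bookkeeping is the verification, using the bisimulation property~(\ref{eq:isom-bisim}), that the projection of the generated $\clR$-subframe rooted at $\stA$ agrees with the generated $\clR_\isom$-subframe rooted at $[\stA]_\isom$, i.e.\ that $\pi(\clR^n(\stA))=\clR_\isom^n([\stA]_\isom)$ for every $n$; this goes by a straightforward induction, with~(\ref{eq:isom-bisim}) used to lift $\clR_\isom$-chains in $\clC_\isom$ back to $\clR$-chains in $\clC$. I expect this little lift to be the subtlest point; everything else is a direct application of the Section~\ref{sec:defs} machinery.
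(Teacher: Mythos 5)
Your proposal is correct and follows essentially the same route as the paper's own (very terse) proof: decompose via Corollary~\ref{cor:gener}, apply the quotient identity~(\ref{eq:quot}) with $\sim$ taken to be $\isom$, and observe that passing from the general frame $(\clR^*(\stA)_\isom,\clR_\isom,\clV_\isom)$ to the full Kripke frame can only shrink the logic. You merely spell out the bookkeeping (the reduction to $L_\Di$, the set-hood of $\clR^*(\stA)_\isom$ under the image-set hypothesis, and the compatibility of quotients with generated subframes) that the paper leaves implicit.
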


\begin{proof}
Let $\stA\in\clC$. The inclusion
$
\MTh^{L}(\clR^*(\stA),\clR)\supseteq
\MLog(\clR^*(\mathfrak A)_{\isom},\clR_\isom)
$
follows from~(\ref{eq:quot})
at the end of Section~\ref{sec:defs}.
Now we apply Corollary~\ref{cor:gener}.
\end{proof}

\hide{$\clC$ and $\clR$ satisfy (\ref{eq:isom-bisim})}
\begin{theorem}\label{thm: robust logicGOOD}
If
$\clR_{\isom}$
is image-set on~$\clC_\isom$, then
\begin{center}
$\MTh^L(\clC,\clR)$ is robust \;iff\;
$
\MTh^L(\clC,\clR)=
\bigcap\limits_{\mathfrak A\in\clC}
\MLog(\clR^*(\mathfrak A)_{\isom},\clR_\isom).
$
\end{center}
\end{theorem}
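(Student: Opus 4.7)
The easy direction is from equality to robustness: assume the displayed equality and fix any $K \supseteq L$; Proposition~\ref{prop:language-monot} gives $\MTh^K(\clC,\clR) \subseteq \MTh^L(\clC,\clR)$, while Proposition~\ref{prop:sup-isom} applied with $K$ in place of $L$ gives $\MTh^K(\clC,\clR) \supseteq \bigcap_{\stA \in \clC} \MLog(\clR^*(\stA)_\isom,\clR_\isom) = \MTh^L(\clC,\clR)$. Hence $\MTh^K = \MTh^L$, so $\MTh^L(\clC,\clR)$ is robust. Note that this half relies only on the right-hand side being manifestly independent of the language.

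For the converse, assume $\MTh^L(\clC,\clR)$ is robust; Proposition~\ref{prop:sup-isom} already provides the ``$\supseteq$'' inclusion, so the plan is to prove the reverse inclusion by contraposition. Suppose $\mvf \notin \MLog(\clR^*(\stA)_\isom,\clR_\isom)$ for some $\stA \in \clC$, witnessed by a Kripke valuation $\theta$ refuting $\mvf$ at a point $[\stB]_\isom$. Since $\clR_\isom$ is image-set on $\clC_\isom$, the class $W := \clR^*(\stA)_\isom$ is actually a set (a countable union of iterated image-sets, hence a set by replacement). The strategy is to construct a language $K \supseteq L$ for which $\mvf \notin \MTh^K(\clC,\clR)$, contradicting robustness of $\MTh^L(\clC,\clR)$.

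To construct $K$, for each $c \in W$ pick a representative $\structurets{D}_c$ and adjoin an iso-invariant atomic ``sentence'' $\sigma_c$ whose models are precisely the structures isomorphic to $\structurets{D}_c$; then close $L \cup \{\sigma_c : c \in W\}$ under Boolean connectives and set-indexed disjunctions over $W$ to obtain a set-sized model-theoretic language in the Barwise--Feferman sense, and pass to $K_\Di$ so that $\clR$-satisfiability becomes expressible via Definition~\ref{def:external-sat}. Define a propositional valuation $\vl$ in $K_\Di$ by $\vl(\mpv) = \bigvee_{c \in \theta(\mpv)} \sigma_c$ for each variable $\mpv$ occurring in $\mvf$. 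An induction on modal subformulas $\mpsi$ --- with the atomic case handled by $\isom$-invariance of the $\sigma_c$, the Boolean case routine, and the modal case by the bisimulation argument of Proposition~\ref{prop:equive-str} (which uses (\ref{eq:isom-bisim})), together with the fact that $\clR^*(\stA)$ is $\clR$-upward closed so that the induction stays inside the intended subclass --- yields
\[
\stC \vDash \vlf{\mpsi} \;\Leftrightarrow\; (\clR^*(\stA)_\isom,\clR_\isom),\,[\stC]_\isom \vDash_\theta \mpsi
\]
for every $\stC \in \clR^*(\stA)$. Applied to $\mpsi = \mvf$ at $\stB$ this gives $\stB \not\vDash \vlf{\mvf}$, so $\mvf \notin \MTh^{K_\Di}(\clR^*(\stA),\clR)$; Corollary~\ref{cor:gener} applied in $K_\Di$ (in which $\clR$-satisfiability is expressible) then delivers $\mvf \notin \MTh^{K_\Di}(\clC,\clR) = \MTh^K(\clC,\clR)$, which is the desired contradiction.

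The main obstacle will be verifying that the enlarged language $K$ is admissible as a model-theoretic language in the Barwise--Feferman sense: the semantic adjunction of the $\sigma_c$ must be shown to preserve isomorphism-invariance of satisfaction, inclusion of $L_{\omega,\omega}$, and set-sizedness of the language, and the set-indexed disjunction used to encode $\theta(\mpv)$ must be legitimate. One may instead realize the $\sigma_c$ syntactically inside $L_{\kappa^+,\omega}$ for $\kappa$ larger than the cardinalities of all chosen representatives, trading the abstract construction for Scott-type sentences at the cost of heavier set-theoretic bookkeeping; either route depends critically on the image-set hypothesis, which is precisely what ensures that $W$ is a set and therefore only set-many sentences need be adjoined to $L$.
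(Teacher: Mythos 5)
Your proof is correct and follows essentially the same route as the paper: the easy direction is exactly Propositions \ref{prop:language-monot} and \ref{prop:sup-isom}, and the hard direction uses the image-set hypothesis in the same way, namely to pass to a language strong enough to separate isomorphism classes on $\clR^*(\stA)$ and to define every subset of the (set-sized) quotient, after which robustness transfers the Kripke-frame logic $\MLog(\clR^*(\stA)_\isom,\clR_\isom)$ back to $L$. The paper phrases this via its frame-of-theories machinery (abstractly choosing languages $K$ and $M$ and citing Theorem \ref{thm:maintoolnew}) while you construct the enriched language explicitly and redo the truth-preservation induction by hand via contraposition; the only caveat is your $L_{\kappa^+,\omega}$ aside, since Scott-type sentences characterize uncountable structures only up to back-and-forth equivalence and one would need infinitary quantifiers there, but your primary, abstract adjunction of the iso-invariant sentences $\sigma_c$ is precisely what the paper's ``we can choose a language'' step presupposes.
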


\begin{proof}
$(\Rightarrow)$
Put $\Lambda=\MTh^L(\clC,\clR)$. In view of
Proposition~\ref{prop:sup-isom}, we only have to
show that $\Lambda$~is valid in the Kripke frame
$(\clR^*(\mathfrak A)_{\isom},\clR_\isom)$
for every $\stA\in\clC$.

Fix $\stA\in\clC$ and put $Y=\clR^*(\stA)$.
For a~language~$M$,
let $({\Ths^M}\!,\clR_{\Ths^M},{\clA^M})$ be
the minimal frame of its theories in the class~$Y$.

Since $\clR_{\isom}$ is image-set on~$\clC_\isom$,
the quotient $Y_{\isom}$ is a~set.
Hence we can choose a~language~$K$ stronger than~$L$
and such that the $K$-equivalence and the isomorphism
relation coincide on~$Y$. Observe that for every $M$
stronger than~$K$, the Kripke frames
$({\Ths^M}\!,\clR_{\Ths^M})$,
$({\Ths^K}\!,\clR_{\Ths^K})$, and
$(Y_{\isom},\clR_\isom)$ are isomorphic.
We choose such~$M$ that
\begin{eqnarray*}
&\AA\, T\in{\Ths^K}\:
\EE\,\vf_T\in M_s\:
\AA\,\stA\in Y\;
\bigl(\stA\mo\vf_T\,\Iff\,\Th^K(\stA)=T\bigr),
&\!\text{and moreover}
\\
&\AA\,\clS\subseteq{\Ths^K}\:
\EE\,\vf_\clS\in M_s\:
\AA\,\stA\in Y\;
\bigl(\stA\mo\vf_\clS\,\Iff\,\Th^K(\stA)\in\clS\bigr).&
\end{eqnarray*}
It follows that ${\clA^M}$~is $\mathcal P(\Ths^M)$,
the powerset of~$\Ths^M$, and so
$(\Ths^M\!,\clR_{\Ths^M},{\clA^M})$ is
a~Kripke frame. By Theorem~\ref{thm:maintoolnew},
$\MTh^M(Y,\clR)$ is the logic of this frame. Therefore,
$\MTh^M(Y,\clR)$ is the logic of the Kripke frame
$(Y_{\isom},\clR_\isom)$. Since $\Lambda$~is robust,
we have $\Lambda = \MTh^M(\clC,\clR)$.
By Corollary~\ref{cor:gener},
$\Lambda \subseteq \MTh^M(Y,\clR)$.

$(\Leftarrow)$ Immediate from Propositions
\ref{prop:language-monot} and~\ref{prop:sup-isom}.
\end{proof}

\hide{
\begin{proof}
$(\Rightarrow)$
Put $\Lambda=\MTh^L(\clC,\clR)$. In view of
Proposition~\ref{prop:sup-isom}, we only have to
show that $\Lambda$~is valid in the Kripke frame
$(\clR^*(\mathfrak A)/{\isom},\clR_\isom)$
for every $\stA\in\clC$.

Fix $\stA\in\clC$ and put $Y=\clR^*(\stA)$.
Since $\clR_{\isom}$ is image-set on~$\clC_\isom$,
the quotient $Y/{\isom}$ is a~set. For a~language~$M$,
let $({\Ths(M)},\clR_{\Ths(M)},{\clA(M)})$ be
the minimal frame of its theories in the class~$Y$.
Hence we can choose a~language~$K$ stronger than~$L$
and such that the $K$-equivalence and the isomorphism
relation coincide on~$Y$. Observe that for any $M$
stronger than~$K$, the Kripke frames
$({\Ths(M)},\clR_{\Ths(M)})$,
$({\Ths(K)},\clR_{\Ths(K)})$, and
$(Y/{\isom},\clR_\isom)$ are isomorphic.
We choose such~$M$ that
\begin{eqnarray*}
&\AA\,T\in{\Ths(K)}\:
\EE\,\vf_T\in M_s\:
\AA\,\stA\in Y\;
\bigl(\stA\mo\vf_T\,\Iff\,\Th^K(\stA)=T\bigr),
&\!\text{and moreover}
\\
&\AA\,\clS\subseteq{\Ths(K)}\:
\EE\,\vf_\clS\in M_s\:
\AA\,\stA\in Y\;
\bigl(\stA\mo\vf_\clS\,\Iff\,\Th^K(\stA)\in\clS\bigr).&
\end{eqnarray*}
It follows that ${\clA(M)}$~is the powerset of~$\Ths(M)$,
and so $(\Ths(M),\clR_{\Ths(M)},{\clA(M)})$ is a~Kripke
frame. By Theorem \ref{thm:maintoolnew},
$\MTh^M(Y,\clR)$ is the logic of this frame. Hence,
$\MTh^M(Y,\clR)$ is the logic of the Kripke frame
$(Y/{\isom},\clR_\isom)$. Since $\Lambda$~is robust,
we have $\Lambda = \MTh^M(\clC,\clR)$.
By Corollary~\ref{cor:gener},
$\Lambda \subseteq \MTh^M(Y,\clR)$.

$(\Leftarrow)$ Immediate from Propositions
\ref{prop:language-monot} and~\ref{prop:sup-isom}.
\end{proof}
}

Hence, when $\clR_{\isom}$ is image-set on $\clC_\isom$, this theorem describes a unique modal logic, the {\em robust theory of $(\clC,\clR)$}.
\hide{; we
shall denote it by $\mathrm{MTh}(\mathcal C,\mathcal R)$  (without a~superscript).\marISH{Remove}}

\begin{remark}
More generally, if $\mathcal K$~is a~class of model-theoretic
languages, a~modal theory $\MTh^L(\mathcal C,\mathcal R)$
is \emph{robust in\/}~$\mathcal K$ iff it coincides with
$\MTh^K(\mathcal C,\mathcal R)$ for every language
$K\supseteq L$ in~$\mathcal K$. It is easy to see that there
exists at most one robust theory of $(\mathcal C,\mathcal R)$
whenever $\mathcal K$~is directed, and exactly one such
theory whenever $\mathcal K$~is countably directed, in
the sense that for every $\{K_n\in\mathcal K:n\in\omega\}$
there is $K\in\mathcal K$ which is stronger than any~$K_n$.
\end{remark}

Notice that even if $\clC$ is a set, Theorem \ref{thm: robust logicGOOD}
does not mean that
the robust theory of
$(\clC,\clR)$ is the logic
of the Kripke frame $(\clC,\clR)$.

\begin{example}
Let $\stA$ be the algebra
$(2^{<\omega},\inf,{\mathrm l},{\mathrm r})$
with the binary operation $\inf(x,y)=x\cap y$
and the unary operations
${\mathrm l}(x)=x^\conc 0$,
${\mathrm r}(x)=x^\conc 1$,
where ${}^{\conc}$~denotes the concatenation.
It is easy to see that $(\Sub(\stA),\supmod)$,
the structure of submodels of~$\stA$, is isomorphic
to the binary tree $\frT_2=(2^{<\omega},\subseteq)$.
The modal logic of the Kripke frame $\frT_2$
is known to be $\lS{4}$ (see~\cite{GoldMink1980}).
However, for every~$L$, the modal theory $\MTh^L(\Sub(\stA),\supmod)$
is the trivial logic given by the axiom
$\mathsf{p}\leftrightarrow\Di\mathsf{p}$ because
every submodel of $\stA$ is isomorphic to~$\stA$.
\end{example}

\paragraph{More completeness results}\label{par:morecompl}
We apply Theorem \ref{thm: robust logicGOOD}
to describe robust theories of the quotient and the submodel relations on certain natural classes.

The following fact most probably was known since 1970s. 
\begin{proposition}[V.\,B.~Shehtman,
private communication]\label{prop:MedvedevTop}
The logic of the Kripke frame
$(\mathcal P(\omega),\subseteq)$ is $\mathrm{S4.2.1}$.
\end{proposition}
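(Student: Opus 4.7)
The plan is to split the proof into a routine soundness part and a more substantial completeness part relying on a p-morphism construction.

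For soundness, I would check the axioms of $\mathrm{S4.2.1}$ directly in the frame $(\mathcal{P}(\omega),\subseteq)$. Reflexivity and transitivity of $\subseteq$ give $\mathrm{S4}$. For the $.2$ axiom $\Di\Box\mathsf{p}\to\Box\Di\mathsf{p}$, use that any two subsets have $X\cup Y$ as a common upper bound, so the frame is directed. For the McKinsey direction $\Box\Di\mathsf{p}\to\Di\Box\mathsf{p}$, the key fact is that $\omega$ is a reflexive top element accessible from every $X$: if $\Box\Di\mathsf{p}$ holds at $X$, then $\Di\mathsf{p}$ holds at $\omega$, forcing $\mathsf{p}$ at $\omega$ (since $\omega$ has only itself as a successor), hence $\Box\mathsf{p}$ at $\omega$ and so $\Di\Box\mathsf{p}$ at $X$. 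Together this yields $\mathrm{S4.2.1} \subseteq \MLog((\mathcal{P}(\omega),\subseteq))$.

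For completeness, I would appeal to Proposition~\ref{prop:preetreeS4}: any $\mvf\notin\mathrm{S4.2.1}$ is refuted in some $Q_n'$. Since surjective p-morphisms (bounded morphisms) preserve validity, it suffices to construct, for each $n\geq 1$, a surjective p-morphism $f_n\colon(\mathcal{P}(\omega),\subseteq)\twoheadrightarrow Q_n'$; then any such refutation lifts to $(\mathcal{P}(\omega),\subseteq)$.

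To construct $f_n$, I would fix a partition $\omega=\bigsqcup_{(s,i)\in Q_n} B_{s,i}$ into infinite blocks indexed by the elements of $Q_n$. Set $f_n(\omega)=*$ (the added top of $Q_n'$). For $X\subsetneq\omega$, read off $f_n(X)$ from the pattern of blocks fully contained in $X$: the tree-coordinate $s$ is chosen as the deepest node such that every block $B_{t,i}$ with $t\not\supseteq s$ is contained in $X$, and the cluster-coordinate is then determined by the pattern of containment of $B_{s,0},\ldots,B_{s,n-1}$ in $X$. The forth condition is immediate, because enlarging $X$ can only raise the saturation level, hence move $f_n(X)$ upward in $Q_n'$.

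The main obstacle will be verifying the back condition: given $X$ with $f_n(X)=(s,i)$ and any $(t,j)$ in the upper cone of $(s,i)$ in $Q_n'$ (including~$*$), I must produce $Y\supseteq X$ with $f_n(Y)=(t,j)$. The $Y$ is built by adding to $X$ precisely those blocks $B_{s',i'}$ with $s'\not\supseteq t$, together with a chosen subset of cluster blocks at~$t$ to realise the index~$j$ (and adding the empty set on top of that for $(t,j)=*$, which corresponds to $Y=\omega$). Care is needed in coordinating the partition and the readout rule so that every element of the upper cone is realised by exactly such an extension and that no extra blocks need be filled; this is where the combinatorics of the tree-with-clusters shape of $Q_n$ has to match the free ``growth'' available in $\mathcal{P}(\omega)$, and it is the step that has to be executed carefully.
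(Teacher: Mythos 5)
Your soundness check is fine (this is what the paper dismisses as ``clear''), and your completeness strategy --- refute $\mvf\notin\mathrm{S4.2.1}$ in some $Q_n'$ via Proposition~\ref{prop:preetreeS4} and lift the refutation along surjective p-morphisms $(\mathcal P(\omega),\subseteq)\twoheadrightarrow Q_n'$ --- is a legitimate variant of the paper's argument, which instead exhibits a single p-morphism onto $\frT_2'$ (the binary tree with a top adjoined) and quotes $\MLog(\frT_2')=\mathrm{S4.2.1}$. The gap is in your construction of $f_n$: as defined it is not a p-morphism, and the failure is structural, not a matter of the careful bookkeeping you defer. Take $n\ge2$ and $X=\omega\setminus\{x_0\}$ with $x_0\in B_{\langle\rangle,0}$, a block at the root $\langle\rangle$ of $n^{<n}$. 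Your readout gives $X$ the tree-coordinate $\langle\rangle$ (the root is the only $s$ for which every block $B_{t,i}$ with $t\not\supseteq s$ is contained in $X$), so $f_n(X)=(\langle\rangle,i)$ for some $i$, whose upper cone in $Q_n'$ is all of $Q_n'$. But $X$ has exactly two supersets, $X$ and $\omega$, so the only values realized above $X$ are $(\langle\rangle,i)$ and $*$; the back condition fails for every target $(t,j)$ with $t\ne\langle\rangle$.

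No adjustment of the cluster readout repairs this, because the obstruction is forced: for any p-morphism $g$, the upper cone of $g(X)$ must be a surjective p-morphic image of the upper cone of $X$, and a p-morphic image of the two-element chain $\{X,\omega\}$ is a chain with at most two points, whereas every element of $Q_n'$ other than the top has a cluster of size $n\ge2$ in its upper cone. So any p-morphism onto $Q_n'$ must send all sets $\omega\setminus\{x\}$ --- and, more generally, all large sets with small upper cones --- to the top, while your rule sends them to the bottom. The readout has to be inverted: the image of $X$ should be computed from what $X$ \emph{contains} and should climb the tree as $X$ grows from below. This is exactly how the paper's sketch works: fixing a bijection $f:\omega\to 2^{<\omega}$, a set $U$ is sent to the greatest element of $f(U)$ when $f(U)$ is a finite chain, and to the adjoined top otherwise; the family of such $U$ is downward closed, so its complement (which contains every cofinite set) is an up-set correctly absorbed by the top.
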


\begin{proof}[Proof (sketch)]
It is not difficult to construct a family
$\clV\subset\mathcal P(\omega)$ such that
$\clV$ is downward-closed (i.e.,
$U_1\subseteq U_2\in\clV$ implies $U_1\in \clV$)
and there exists a~p-morphism of $(\clV,\subseteq)$
onto the binary tree $\frT_2=(2^{<\omega},\subseteq)$.
(E.g., let $f$ be a~bijection $\omega\to 2^{<\omega}$,
and let $\clV$ consist of $U\subset \omega$ such that $f(U)$
is a~finite chain in~$\frT_2$; the required p-morphism
takes $U$ to the greatest element of this chain.)
This p-morphism obviously  extends to the p-morphism
of $(\clP(\omega),\subseteq)$ onto~$\frT_2'$,  the ordered sum of $\frT_2$ and a~reflexive
singleton. Since $\MLog(\frT_2')=\mathrm{S4.2.1}$
(see~\cite{ShehtMink83}), it follows that
${\MLog(\mathcal P(\omega),\subseteq)}$ is included
in $\mathrm{S4.2.1}$. The converse inclusion is clear.
\end{proof}

\begin{theorem}\label{thm:submRob}
Let $\Omega$ contain a functional
symbol and a constant symbol, and let
$\mathcal C$~be the class of all models of~$\Omega$.
Then the robust theory of $(\mathcal C,\sqsupseteq)$ is $\mathrm{S4.2.1}$.
\end{theorem}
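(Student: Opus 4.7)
The plan is to apply Theorem~\ref{thm: robust logicGOOD} together with Theorem~\ref{thm:sqsup-completeness}, identifying the robust theory as $\lS{4.2.1}$.

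First, I would verify the image-set hypothesis of Theorem~\ref{thm: robust logicGOOD}: for any $\stA \in \clC$, every submodel of~$\stA$ has cardinality at most $|\stA|$, so the isomorphism classes of submodels of~$\stA$ form a set, and hence $\supmod_\isom$ is image-set on $\clC_\isom$. Because $\supmod$ is reflexive and transitive, $\supmod^*(\stA) = \Sub(\stA)$. Theorem~\ref{thm: robust logicGOOD} thus tells us that establishing
\[
\MTh^L(\clC, \supmod)
\;=\;
\bigcap_{\stA \in \clC}
\MLog\bigl(\Sub(\stA)_\isom,\, \supmod_\isom\bigr)
\;=\;
\lS{4.2.1}
\]
for some suitable~$L$ simultaneously proves robustness and identifies the robust theory.

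To set up the right $L$, I would invoke Proposition~\ref{prop:submod-quot-exprr} to pick a second-order language $L$ in which the $\supmod$-satisfiability on~$\clC$ is expressible. For this $L$, Theorem~\ref{thm:sqsup-completeness} gives $\MTh^L(\clC, \supmod) = \lS{4.2.1}$, and Proposition~\ref{prop:sup-isom} yields the inclusion $\bigcap_{\stA} \MLog(\Sub(\stA)_\isom, \supmod_\isom) \subseteq \lS{4.2.1}$.

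For the reverse inclusion, I would check, frame by frame, that each Kripke frame $(\Sub(\stA)_\isom, \supmod_\isom)$ validates $\lS{4.2.1}$. The $\lS{4}$-axioms follow at once from reflexivity and transitivity of~$\supmod$. Since $\Omega$ contains a constant symbol, every submodel of~$\stA$ contains the submodel $\stB_0$ generated by the constants, so $[\stB_0]_\isom$ is a reflexive final point accessible from every element of the frame; the presence of such an accessible reflexive dead-end forces validity of the defining axiom $\Box\Di\mathsf{p} \leftrightarrow \Di\Box\mathsf{p}$ of $\lS{4.2.1}$, exactly as in the proof of Theorem~\ref{thm:sqsup-soundness}. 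Combining the two inclusions yields the displayed equality, and Theorem~\ref{thm: robust logicGOOD} then delivers both robustness of $\MTh^L(\clC, \supmod)$ and the identification of the robust theory with $\lS{4.2.1}$. The main subtle point I see is the frame-level verification of the $\lS{4.2.1}$-axiom at every $(\Sub(\stA)_\isom, \supmod_\isom)$; everything else is bookkeeping that feeds the two named theorems into each other.
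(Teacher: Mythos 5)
Your proposal has a genuine gap in its coverage of the hypothesis. Theorem~\ref{thm:submRob} only assumes that $\Omega$ contains \emph{a} functional symbol and a constant symbol, so the minimal case is a single \emph{unary} functional symbol plus a constant. But your upper bound $\bigcap_{\stA}\MLog(\Sub(\stA)_\isom,\supmod_\isom)\subseteq\lS{4.2.1}$ is obtained by feeding Theorem~\ref{thm:sqsup-completeness} into Proposition~\ref{prop:sup-isom}, and Theorem~\ref{thm:sqsup-completeness} (via the construction of the models $\mathfrak A_n$ in Theorem~\ref{thm:models-An}) requires a functional symbol of arity~$\ge 2$; a genuinely binary operation cannot be mimicked by a unary symbol, so this route simply does not apply to unary signatures. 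The paper flags exactly this point in Remark~\ref{rem:compareTheorems}: Theorem~\ref{thm:submRob} ``covers more signatures in the case with constants'' than Theorem~\ref{thm:sqsup-completeness}, so any proof that routes through the latter proves a strictly weaker statement.

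The paper's own proof avoids this by a direct, self-contained construction: it takes $\mathfrak A$ to be the disjoint sum of the cycles $\mathfrak A_n$ (one-generated unars satisfying $F^n(x)=x$), adds a fixed point interpreting the constants, and observes that the submodels of the resulting $\mathfrak A'$, which are pairwise non-isomorphic, form a Kripke frame $(\Sub(\mathfrak A')_\isom,\supmod_\isom)$ isomorphic to $(\mathcal P(\omega),\supseteq)$; Proposition~\ref{prop:MedvedevTop} then gives the upper bound $\lS{4.2.1}$ from this single generated subframe, and Theorem~\ref{thm: robust logicGOOD} does the rest. Everything else in your write-up is sound: the image-set check, the identification $\supmod^*(\stA)=\Sub(\stA)$, and the frame-level verification that each $(\Sub(\stA)_\isom,\supmod_\isom)$ is reflexive, transitive, and has the reflexive last point $[\stB_0]_\isom$ generated by the constants, hence validates $\Box\Di\mathsf p\leftrightarrow\Di\Box\mathsf p$. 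If you restrict to signatures with a functional symbol of arity~$\ge 2$, your argument goes through and is a legitimate alternative; to prove the theorem as stated you need a witness frame available in the unary case, which is precisely what the cycle construction supplies.
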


\begin{proof}
Soundness is straightforward.
Let us check the converse inclusion.

\hide{
For any $n\in\omega{\setminus}\{0\}$, let
$\mathfrak A_n$~be a~one-generated unar, i.e.,
a~model of a~signature~$\Omega_0$ consisting of
a~single unary functional symbol~$F$, such that
$n$~is the least~$m>0$ for which $\mathfrak A_n$~satisfies
the identity $F^m(x)=x$; thus $\mathfrak A_n$~can be
considered as a~cycle of length~$n$. As easy to see,
each $\mathfrak A_n$~has no proper submodels.
}

Let a~signature~$\Omega_0$ consist of a~single unary
functional symbol~$F$; models of this signature are
called {\em unars}. For any $n\in\omega{\setminus}\{0\}$,
let $\mathfrak A_n$~be  a~cycle of length~$n$, i.e.,
a~one-generated unar  of
cardinality~$n$  that satisfies
$F^n(x)=x$. As easy to see,
each $\mathfrak A_n$~has no proper submodels.

Let also $\mathfrak A$~be the disjoint sum of
all these~$\mathfrak A_n$. Clearly, $\mathfrak A$~is
a~countably generated unar, and any its submodel is
given by a~nonempty set $S\subseteq\omega$, and the
map taking $S$ to the disjoint sum of $\mathfrak A_n$
for all $n\in S$ is an isomorphism between
$(\mathcal P(\omega){\setminus}\{\emptyset\},\supseteq)$
and the frame $(Sub(\mathfrak A),\sqsupseteq)$ of
submodels of~$\stA$.
Moreover, all submodels of~$\mathfrak A$ are
pairwise non-isomorphic, and hence, the
Kripke frame
$(Sub(\mathfrak A)_{\isom},\sqsupseteq_{\isom})$
is also isomorphic to
$(\mathcal P(\omega){\setminus}\{\emptyset\},\supseteq)$.

Without loss of generality we may assume that
$F$ is the only functional symbol in~$\Omega$
(we can always mimic a unary functional symbol by
any functional symbol of arity $\geq 1$).
Expanding $\Omega_0$ to~$\Omega$, let $\mathfrak A'$
be the model of~$\Omega$ obtained from $\mathfrak A$
by adding a~single extra point $a$ which interprets all
constant symbols and assuming $F(a)=a$.
\marISH{DC}%
Then the Kripke frame
$(Sub(\mathfrak A')_{\isom},\sqsupseteq_{\isom})$
is isomorphic to $(\mathcal P(\omega),\supseteq)$.

By Theorem \ref{thm: robust logicGOOD}, the robust
theory of $(\mathcal C,\sqsupseteq)$ is included in
the logic $\MLog(\mathcal P(\omega),\supseteq)$, which is
$\mathrm{S4.2.1}$ by Proposition~\ref{prop:MedvedevTop}.
\end{proof}

\begin{remark}
It follows from the above proof that for
the class~$\mathcal C$ of unars without
constant symbols, the robust theory of
$(\mathcal C,\sqsupseteq)$ is included in
the intersection of the logics of {\em Skvortsov frames} $(\clP(\kappa){\setminus}\{\emptyset\},\supseteq)$ for all $\kappa$.
However, it is unclear how the ``soundness'' part of
the proof can be obtained. This question is connected
to a long-standing open problem about properties of
the modal Medvedev logic, see, e.g.,~\cite{ShehtMedv1990}.
\end{remark}

\begin{remark}\label{rem:compareTheorems}
The proof of Theorem \ref{thm:submRob} is much simpler
than the proof of Theorem \ref{thm:sqsup-completeness},
and covers more signatures in the case with constants.
However, it is unclear whether the completeness result
holds for second- or first-order language. In other words,
we do not know whether the resulting theories are robust
in the case of second- or first-order language.
\end{remark}

Let $\mathfrak B\le\mathfrak A$ mean that
$\mathfrak B$~is a~quotient of~$\mathfrak A$
(for the definition of quotients of arbitrary models,
see~\cite[Section 2.4]{Malcev73}), and
$
Quot(\mathfrak A)=\{\mathfrak B:
\mathfrak B\le\mathfrak A\}.
$
Quotients are, up to isomorphism, images under
strong homomorphisms, hence the logic of quotients is the
same that the logic of strong homomorphic images.

\begin{theorem}\label{thm:quot}
Let $\Omega$ have a~functional symbol of arity~$\ge1$,
and $\mathcal C$ the class of all models of~$\Omega$.
Then the robust theory of $(\mathcal C,\geq)$
is~$\mathrm{S4.2.1}$.
\end{theorem}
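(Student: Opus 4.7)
The plan is to follow the template of Theorem~\ref{thm:submRob}, splitting into soundness and completeness.

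\emph{Soundness.} I would verify reflexivity of~$\geq$ via the identity congruence and transitivity via composition of strong homomorphisms, giving~$\lS{4}$. For $\Box\Di\mathsf p\leftrightarrow\Di\Box\mathsf p$, the crucial observation will be that since~$\Omega$ contains a functional symbol, every $\stA\in\clC$ has a one-point quotient~$\stA^*$ (obtained via the total congruence, with predicate values determined by the strong homomorphism condition), whose only quotient up to isomorphism is~$\stA^*$ itself. Such a ``final'' successor above every point validates $\Box\Di\mathsf p\to\Di\Box\mathsf p$, while directedness (any two quotients of~$\stA$ admit a common further quotient, namely the join of the two congruences, or simply $\stA^*$) validates $\Di\Box\mathsf p\to\Box\Di\mathsf p$.

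\emph{Completeness.} The relation~$\geq_\isom$ is image-set on~$\clC_\isom$ since quotients of~$\stA$ have cardinality at most~$|\stA|$; so by Theorem~\ref{thm: robust logicGOOD} it will suffice to exhibit some $\stA\in\clC$ with $\MLog(\geq^*(\stA)_\isom,\geq_\isom)\subseteq\lS{4.2.1}$. I would assume without loss of generality that~$\Omega$ contains a unary functional symbol~$F$ (otherwise fix extra arguments of any functional symbol of~$\Omega$), let~$\stA_n$ denote the $n$-cycle, take $\stA=\bigsqcup_{p\text{ prime}}\stA_p$, and interpret all other symbols trivially.

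The key step will be a classification of congruences on~$\stA$. Each~$\stA_p$ for prime~$p$ admits only~$\stA_p$ and~$\stA_1$ as quotients up to isomorphism, since its congruence lattice corresponds to the subgroup lattice of~$\mathbb Z/p\mathbb Z$. Moreover, identifying any $a\in\stA_p$ with $b\in\stA_q$ for distinct primes $p\neq q$ will force both cycles to collapse: since $\gcd(p,q)=1$, $F^p$ acts as a generator on~$\stA_q$, so iterating yields $b\sim F^p(b)\sim F^{2p}(b)\sim\ldots$, exhausting~$\stA_q$, and symmetrically~$\stA_p$. Hence every congruence on~$\stA$ is determined by a set~$S$ of primes (cycles collapsed to~$\stA_1$) together with a partition of the resulting~$|S|$ collapsed points; up to isomorphism the quotient depends only on $T=\omega\setminus S$ (identifying primes with~$\omega$) and the number~$k$ of blocks of the partition. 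I would then define $\pi:(T,k)\mapsto T$ and verify it is a surjective p-morphism onto $(\mathcal P(\omega),\supseteq)$: a direct computation shows $(T_1,k_1)\geq(T_2,k_2)$ iff $T_2\subseteq T_1$ and $k_2\leq k_1+|T_1\setminus T_2|$ (with the edge case $k=0\Leftrightarrow T=\omega$), making~$\pi$ order-preserving, and for the back condition, given $T'\subseteq T_1$ one can always pick~$k'$ in the nonempty valid range. Since the logic of $(\mathcal P(\omega),\supseteq)$ coincides with that of $(\mathcal P(\omega),\subseteq)$ via the order-isomorphism $X\mapsto\omega\setminus X$, it equals~$\lS{4.2.1}$ by Proposition~\ref{prop:MedvedevTop}.

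The main obstacle will be the case analysis of cross-component identifications, which relies crucially on coprimality of distinct primes to force full collapse of interacting cycles; using a disjoint union of all cycles~$\stA_n$ (as in Theorem~\ref{thm:submRob}) would be more intricate, since~$\stA_n$ has a quotient for each divisor of~$n$, making restriction to primes the natural simplification.
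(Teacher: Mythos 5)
Your proposal is correct and follows essentially the same route as the paper's proof: both take the disjoint sum of prime cycles, classify quotients by the set of surviving primes together with the number of collapsed points, and project onto $(\mathcal P(\omega),\supseteq)$ by a p-morphism that forgets the count, concluding via Proposition~\ref{prop:MedvedevTop}. The only differences are cosmetic (the paper also includes $\stA_1$ in the base model and verifies $\Box\Di\mathsf p\leftrightarrow\Di\Box\mathsf p$ in one chain of equivalences through the one-point quotient), and your extra detail on why cross-cycle identifications force full collapse is a welcome elaboration of a step the paper only asserts.
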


\begin{proof}\marISH{New. Double check}
Soundness is straightforward. In particular, every
model~$\stA$ has a~unique single-point quotient~$\stB$,
and for  any valuation $\vl$ in $L$ we have:
${\stA\mo\vlf{\Box\Di \mathsf{p}}}$ iff
$\stB\mo\vlf{\Di \mathsf{p}}$ iff
$\stB\mo\vlf{\Box \mathsf{p}}$ iff
$\stA \mo \vlf{\Di \Box \mathsf{p}}.$

\marISH{Formally, $\geq$ is not transitive, and I skipped one trivial step here. Denis, are you OK with the above statement?}%
By Theorem \ref{thm: robust logicGOOD},
the robust theory of $(\mathcal C,\ge)$ is included in
the logic $(Quot(\mathfrak A)_{\isom},\ge_{\isom})$ for
any model~$\stA$ of~$\Omega$.
We shall construct $\mathfrak A$  such that
$\MLog(Quot(\mathfrak A)_\isom,\ge_\isom)\subseteq \mathrm{S4.2.1}$.
It suffices to handle the case when $\Omega$~consists
of a~single unary functional symbol $F$.
As in
Theorem~\ref{thm:submRob}, let $\mathfrak A_n$~be  a~one-generated unar forming the
cycle of length~$n$.
\hide{
As in
Theorem~\ref{thm:submRob}, let $\mathfrak A_n$~be
a~one-generated unar satisfying $F^n(x)=x$ with the least
such~$n>0$ (forming a~cycle of length~$n$).
}%
 It is easy to see that
all quotients of $\mathfrak A_n$ are (up to isomorphism)
exactly $\mathfrak A_m$ for $n$ divisible by $m$; in
particular, if $p$~is prime, $Quot(\mathfrak A_p)$ is $\{\mathfrak A_1,\mathfrak A_p\}$.

Let $\mathfrak A$~be the disjoint sum of the
unars~$\mathfrak A_p$ for $p=1$ or $p$~prime.
Let $P$ denote the set of primes.
Up to isomorphism, every quotient $\stB$ of $\stA$
is the disjoint sum of unars $\{\stA_p:p\in S\}$ and
also $n$ copies of $\stA_1$ for some $S\subseteq P$
and $n\leq 1+|P{\setminus}S|$;
we put $h(\stB)=h([\stB]_\isom])=S$ and $g(\stB)=n$.
We claim that $h$ is a p-morphism of
$(Quot(\mathfrak A)_\isom,\ge_\isom)$ onto
$(\clP(P),\supseteq)$. Surjectivity and monotonicity
are straightforward. Assume that $h([\stB]_\isom)=S$ and
pick any $S'\subseteq S$.
Let $\stB'$ be the disjoint sum of unars
$\{\stA_p:p\in S'\}$ and $g(\stB)$ copies of~$\stA_1$.
Then $\stB'$ is isomorphic to a quotient of $\stB$, and $h([\stB']_\isom)=S'$, as required.

It follows that
$\MLog(Quot(\mathfrak A)_\isom,\ge_\isom)$ is
included in $\MLog(\clP(P),\supseteq)$.
By Proposition~\ref{prop:MedvedevTop}, the latter
logic is $\lS{4.2.1}$, which completes the proof.
\end{proof}

\begin{remark}
This theorem remains true for (not necessarily strong)
homomorphic images as well.
That $\lS{4.2.1}$ includes the modal theory of
homomorphisms follows from the proof above
(the signature of $\stA$ does not have predicate symbols).
And $\lS{4.2.1}$  is sound since
$\mathcal C$ has a single-point model that gives
a~top element in $\mathcal C/{\isom}$\,.
\end{remark}

\begin{remark}\label{ren:quot}\marISH{new}
Similarly to the case of the submodel relation,
the quotient relation is expressible in an appropriate
second-order language. Given $\Omega$ and
$\lambda>|\{F\in\Omega:F$~is a~functional symbol$\}|$,
for every $\varphi$ of $L^{2}_{\lambda,\omega}$
in~$\Omega$ we pick a~fresh binary predicate variable~$U$
and define $\varphi_U$ by induction on~$\varphi$:
if $t_0,\ldots,t_{n-1}$ are terms in~$\Omega$ and
$P(t_0,\ldots,t_{n-1})$ is an atomic formula, then
\begin{itemize}
\item[]
$P(t_0,\ldots,t_{n-1})_U$ is
$\exists x_0\ldots\exists x_{n-1}
\bigl(\bigwedge_{i<n}U(x_i,t_i)
\wedge P(x_0,\ldots,x_{n-1})\bigr)
$
\end{itemize}
where $x_i$~are fresh first-order variables;
if $F$~is a~functional variable, then
\begin{itemize}
\item[]
$(\exists F\,\varphi)_U$ is
$
\exists F\,(\text{$U$~is a~congruence for~$F$}
\wedge\varphi_U);
$
\end{itemize}
we let the operation distribute w.r.t.~Boolean
connectives and quantifiers over first-order and
predicate variables (e.g., $(\exists P\,\varphi\bigr)_U$
is $\exists P\,\varphi_U$ for every predicate
variable~$P$). If $\chi(U)$~says that the interpretation
of~$U$ is a~congruence for interpretations of all
functional symbols in~$\Omega$, then the map
$\varphi\mapsto\exists U(\chi(U)\wedge\varphi_U)$
expresses the $\geq$-satisfiability on $\Omega$-models.

However, in second-order language, we do not know
whether the modal theory of quotients
is $\lS{4.2.1}$.
\end{remark}
\extended{

The same can be done for $\mathcal L^{2}_{\lambda,\nu}$.
However, we do not know whether
$\MTh^L(\mathcal C,\geq)$ equals S.4.2.1 if $L$~is
$L^{2}_{\lambda,\omega}$ (or $L_{\lambda,\omega}$,
or $\mathcal L^{2}_{\lambda,\nu}$).
}

So far we have calculated modal theories only for
classes consisting of all models of a~given signature.
In the following examples, we describe robust modal
theories of classes consisting of models of a~non-trivial
theory; we only outline ideas and postpone complete
proofs for a~further paper.

\begin{example}
Let $\mathcal D$~be the class of dense linearly
ordered sets without end-points. It was observed
in Example~\ref{ex:non-rob1} that the modal theory
$\MTh^{L_{\omega,\omega}}(\mathcal D,\sqsupseteq)$
is trivial: this is the logic of a~reflexive singleton
axiomatized by the formula
$\mathsf{p}\leftrightarrow\Diamond\mathsf{p}$.
On the other hand, the robust theory
of $(\mathcal D,\sqsupseteq)$ is $\mathrm{S4.2.1}$.

By the classical Cantor theorem, every two
countable orders in $\mathcal D$ are isomorphic
(see, e.g., \cite[Theorem~2.8]{rosenstein1982linear}). Therefore,
$(\mathcal D_{\isom},\sqsupseteq_{\isom})$ has a~top
element (the order type of rationals), so its logic
includes~$\mathrm{S4.2.1}$ (in spite of the fact that
we do not have constant symbols in the signature).
To prove the converse inclusion, in view of
Corollary~\ref{cor:gener}, it suffices to show that
S4.2.1 includes the logic of a~generated subframe of
the frame $(\mathcal D_{\isom},\sqsupseteq_{\isom})$.

Given $S\subseteq\mathbb R$, let
$D_S=\{[X]_{\isom}:X\subseteq S$~is dense in~$\mathbb R\}$.
(Note that $[X]_{\isom}$ is the order type of~$X$.)
First we observe that, whenever $S\subseteq\mathbb R$
is dense, then $(D_S,\sqsupseteq_{\isom})$ forms an
upper cone of $(\mathcal D_{\isom},\sqsupseteq_{\isom})$.
Hence it suffices to find $S$ such that the logic of
$(D_S,\sqsupseteq_{\isom})$ coincides with S4.2.1.
For this, we use the following result by Sierpi\'nski:
there are two disjoint sets $E,F$ of reals both dense
in~$\mathbb R$, having cardinality $|E|=|F|=2^{\aleph_0}$,
and such that $f(E)\not\subseteq E\cup F$ for any
non-identity order embedding $f:\mathbb R\to\mathbb R$
(see, e.g., \cite[Chapter~9,~\S\,2]{rosenstein1982linear}).
Let $S=E\cup F$, and for any dense subset~$X$ of $S$, let
$\pi([X]_\isom)=A$ if $X=E\cup A$ for some $A\subseteq F$,
and $\pi([X]_\isom)=\emptyset$ otherwise. It can be
verified that $\pi$~is a~well-defined map and moreover,
a~p-morphism of $(D_S,\sqsupseteq_{\isom})$ onto
$(\mathcal P(F),\supseteq)$. It induces
a~p-morphism of $(D_S,\sqsupseteq_{\isom})$
onto $(\mathcal P(\omega),\supseteq)$.
Applying Proposition~\ref{prop:MedvedevTop},
we conclude that $\MLog(D_S,\sqsupseteq_{\isom})$
is $\mathrm{S4.2.1}$, as required.

Let $\mathcal L$ and $\mathcal O$ be the classes
of linearly ordered sets and partially ordered sets,
respectively. The robust theories of
$(\mathcal L,\sqsupseteq)$ and $(\mathcal O,\sqsupseteq)$
also coincide with $\mathrm{S4.2.1}$. To see this,
we note that there exists a~p-morphism of
$(\mathcal L,\sqsupseteq)$ onto
$(\mathcal D,\sqsupseteq)$ (condensing scattered segments, see, e.g., \cite[Chapter~4]{rosenstein1982linear}) and
that $(\mathcal L,\sqsupseteq)$ is an upper cone of
$(\mathcal O,\sqsupseteq)$. This induces the same relationships between
$(\mathcal D_\isom\sqsupseteq_\isom)$,
$(\mathcal L_\isom,\sqsupseteq_\isom)$, and
$(\mathcal O_\isom,\sqsupseteq_\isom)$.
\end{example}

\begin{example}
Let $\clC$ be the class of modal algebras.
Similarly to the proof
of Theorem~\ref{thm:submRob}, one can show that the robust theory
of $(\clC,\quot)$ is $\lS{4.2.1}$. Namely, let
$\frF$ be the disjoint sum of Kripke frames
$\frF_n=(n,n\times n)$, and
$\stA$ be the modal algebra of $\frF$. One can construct a p-morphism from
$(Quot(\stA)_\isom,\quot_\isom)$ onto
$(\clP(\omega),\supseteq)$. Thus, $\logicts{S4.2.1}$ is the robust theory of the quotient relation on the class $\clC$ of modal algebras.
We have the same axiomatization in the case when $\clC$ is the class of $\logicts{S4}$-algebras or $\logicts{S5}$-algebras,  because $\stA$ is an $\lS{5}$-algebra. (More generally, this holds if $\clC$ contains $\stA$ and is closed under quotients.)

The axiomatization in the case when $\clC$ is the  class
of Boolean algebras is an open question.
\end{example}

\begin{example}\label{ex:freegr}
Let $\mathcal C$~be the class of free groups.
 Let
$\mathbb F_\kappa$~be a~$\kappa$-generated free group.
All~$\mathbb F_\kappa$ with
$2\le\kappa\le\aleph_0$ are pairwise embeddable and
non-isomorphic, while on all other~$\mathbb F_\kappa$
their ordering by embedding coincides with their
ordering by rank (the least cardinality of generators).
Therefore, the Kripke frame
$(\mathcal C_\isom,\sqsupseteq_\isom)$ has the top
(corresponding to~$\mathbb F_1$),
a~countable cluster immediate below the top
(corresponding to $\mathbb F_\kappa$'s with
$2\le\kappa\le\aleph_0$), and a~structure
isomorphic to $(Ord,\ge)$
(corresponding to $\mathbb F_\kappa$'s with
$\kappa\ge\aleph_1$).
By Theorem~\ref{thm: robust logicGOOD}, it follows that
the robust theory of
$(\mathcal C,\sqsupseteq)$  is the modal logic of ordered sums
$(\alpha,\geq)+S+S_0$ for $\alpha\in Ord$,  a countable cluster $S$, and a singleton $S_0$.
By the standard filtration technique, this logic has the finite model property (more precisely, we may assume that
$\alpha$ and $S$ are finite) and decidable.

The class $\clC$ is closed under~$\sqsupseteq$ as any subgroup
of a~free group is free by the classical
Nielsen--Schreier theorem. Hence,
$\sqsupseteq$-satisfiability on~$\mathcal C$
is expressible in
$L^{2}_{\omega,\omega}$
by Proposition~\ref{prop:submod-quot-exprr}.
We do not know whether
the modal theory of $\sqsupseteq$ on~$\mathcal C$
in $L^{2}_{\omega,\omega}$ is robust.

We remark that in the first-order case the theory
is not robust. Let $\equiv$ be the usual elementary
equivalence. Observe that
$\mathbb F_1\isom\mathbb Z\not\equiv\mathbb F_2$
(obvious), and $\mathbb F_\kappa\equiv\mathbb F_2$
for all $\kappa\ge2$ (by recently proved famous
Tarski's conjecture; see, e.g., \cite[Chapter~9]{ElementaryGroups}).
It follows that $(\mathcal C_\equiv,\sqsupseteq_\equiv)$
is
isomorphic to the ordinal~$2$ (with the top
corresponding to~$\mathbb F_1$ and the bottom
to other free groups).
Moreover, the logic
$\MTh^{L_{\omega,\omega}}(\clC,\sqsupseteq)$ is
the logic of the ordinal~2. This follows from
Corollary~\ref{prop: finite frame}
in view of the following observation made by one
the reviewers on an earlier version of the paper:
the function $f:L_s\to L_s$ defined by letting
\[
f(\vf)=\left\{
\begin{array}{ll}
\top  & \text{ if }
\mathbb F_1\mo \vf,\\
\vf  & \text{ if }
\mathbb F_1\not\mo\vf\text{ but }\mathbb F_2\mo\vf,\\
\bot  & \text{ otherwise}
\end{array}
\right.
\]
expresses the $\sqsupseteq$-satisfiability on $\clC$.
This observation can be generalized as follows: if
the elementary equivalence $\equiv$ is a~bisimulation
w.r.t.~$\clR$ on $\clC$ and $\clC_\equiv$ is finite, then
the $\clR$-satisfiability on $\clC$ is first-order expressible.

\hide{
\marISH{DC}
Finally, notice that on the class of free groups
on $\kappa\ge2$ generators, the relations $\sqsupseteq$,
$\geq$, and $\succeq$ (of being an elementary submodel)
coincide (see \cite[Chapter~9]{ElementaryGroups}).
Thus this example can be modified for modal theories of free groups
endowed with $\geq$ and $\succeq$ as well. }
\end{example}

\hide{
The proof of Theorem \ref{thm:sqsup-completeness} is the least
technically elegant completeness proof in this work.\marISH{now ``least
technically elegant''  sounds a bit strange to me...}
However, this theorem is the only
result
describing the robust theories in
the case of second-, and moreover (according to
Corollary~\ref{cor:first-robust-sub}), first-order language.
\marISH{This repeats Remark
\ref{rem:compareTheorems}}

Let us note that the binary operation used
in the proof of Theorem~\ref{thm:sqsup-completeness}
is not associative. Thus, the modal axiomatizations
of $\supmod$ on monoids or (semi)groups in
first- or second-order languages are open
questions. Analogous
questions are open for the quotient relation,
as well as for other relations and theories.
\marISH{Sounds very strange...}
}

\smallskip

Even in the case of all models of a~given signature,
the axiomatization of the robust theory can be a~very
difficult problem. Consider the theories of the
relation~$\supmod$ on the class~$\clC$ of all models
of a~signature~$\Omega$ consisting of unary predicates
and perhaps some constants.

The easiest is the degenerated case when $\Omega$ has
no symbols other than constants. The theory of submodels
on this class is $\logicts{Grz.3}$, the
Grzegorczyk logic with the linearity axiom. As well-known,
this is the logic of conversely well-ordered sets, or
of the set~$\omega$ with the converse order, or else
of all finite linearly ordered sets.

Furthermore, these theories include the logic $\logicts{Grz}$ iff $\Omega$ contains finitely many predicates. We announce that in the case of
$n<\omega$~predicates, the robust modal theory coincides with the
modal logic of the direct product of $2^n$~copies of~$\omega$
with the converse order if the signature has constants, and
with the logic of this structure without the top element
otherwise. These logics have the finite model property, decrease as $n$~grows, and their
intersection coincides with $\MLog(\clP_{\omega}(\omega){\setminus}\{\emptyset\},\supseteq)$, the modal Medvedev logic.
Despite this clear semantical description, no  complete axiomatizations
for these logics are known.

\paragraph{Acknowledgements.}
We are grateful to the reviewers
for their multiple suggestions and questions on  earlier versions of the paper. We are also grateful to
Lev Beklemishev, Philip Kremer,  Fedor Pakhomov, Vladimir Shavrukov, Valentin Shehtman,  Albert Visser for their attention to our work,
useful comments, and discussions.

The work on this paper was supported by the Russian Science Foundation under grant 16-11-10252 and carried out at Steklov Mathematical Institute of Russian Academy of Sciences.

\bibliographystyle{sl}
\bibliography{modmod}

\begin{thebibliography}{10}

\bibitem{ArtemovBekl04Prov}
\textsc{Artemov, Sergei~N.}, and \textsc{Lev~D. Beklemishev}, `Provability
  logic', in \emph{Handbook of Philosophical Logic, 2nd ed}, Kluwer, 2004, pp.
  229--403.

\bibitem{BarwiseFeferman}
\textsc{Barwise, Jon}, and \textsc{Solomon Feferman}, \emph{Model-theoretic
  logics}, Perspectives in mathematical logic, Springer-Verlag, 1985.

\bibitem{BarwiseVBenthem1999}
\textsc{Barwise, Jon}, and \textsc{Johan van Benthem}, `Interpolation,
  preservation, and pebble games', \emph{Journal of Symbolic Logic}, 64 (1999),
  2, 881--903.

\bibitem{TemporalSubst2013}
\textsc{Benerecetti, Massimo}, \textsc{Fabio Mogavero}, and \textsc{Aniello
  Murano}, `Substructure temporal logic', in \emph{Proceedings of the 2013 28th
  Annual ACM/IEEE Symposium on Logic in Computer Science}, LICS '13, IEEE
  Computer Society, Washington, DC, USA, 2013, pp. 368--377.

\bibitem{Berarducci1990}
\textsc{Berarducci, Alessandro}, `The interpretability logic of {P}eano
  arithmetic', \emph{Journal of Symbolic Logic}, 55 (1990), 3, 1059--1089.

\bibitem{BRV-ML}
\textsc{Blackburn, Patrick}, \textsc{Maarten de~Rijke}, and \textsc{Yde
  Venema}, \emph{Modal Logic}, vol.~53 of \emph{Cambridge Tracts in Theoretical
  Computer Science}, Cambridge University Press, 2002.

\bibitem{BlockLowe2015}
\textsc{Block, Alexander~C.}, and \textsc{Benedikt L{\"{o}}we}, `Modal logics
  and multiverses', \emph{RIMS Kokyuroku}, 1949 (2015), 5--23.

\bibitem{buss1990}
\textsc{Buss, Samuel~R.}, `The modal logic of pure provability.', \emph{Notre
  Dame J. Formal Logic}, 31 (1990), 2, 225--231.

\bibitem{CZ}
\textsc{Chagrov, Alexander}, and \textsc{Michael Zakharyaschev}, \emph{Modal
  Logic}, vol.~35 of \emph{Oxford Logic Guides}, Oxford University Press, 1997.

\bibitem{chang1990model}
\textsc{Chang, C.C.}, and \textsc{H.J. Keisler}, \emph{Model Theory}, Studies
  in Logic and the Foundations of Mathematics, Elsevier Science, 1990.

\bibitem{agostino_hollenberg_2000}
\textsc{D'agostino, Giovanna}, and \textsc{Marco Hollenberg}, `Logical
  questions concerning the $\mu$-calculus: Interpolation, {L}yndon and
  {L}o\'s-{T}arski', \emph{Journal of Symbolic Logic}, 65 (2000), 1, 310--332.

\bibitem{ElementaryGroups}
\textsc{Fine, Benjamin}, \textsc{Anthony Gaglione}, \textsc{Alexei Myasnikov},
  \textsc{Gerhard Rosenberger}, and \textsc{Dennis Spellman}, \emph{The
  Elementary Theory of Groups}, digital original edn., Expositions in
  Mathematics, De Gruyter, 2014.

\bibitem{GoldMink1980}
\textsc{Goldblatt, Robert}, `Diodorean modality in {M}inkowski spacetime',
  \emph{Studia Logica: An International Journal for Symbolic Logic}, 39 (1980),
  219--236.

\bibitem{Goldblatt92}
\textsc{Goldblatt, Robert}, \emph{Logics of Time and Computation}, 2 edn.,
  no.~7 in CSLI Lecture Notes, Center for the Study of Language and
  Information, 1992.

\bibitem{Hamkins03}
\textsc{Hamkins, Joel~David}, `A simple maximality principle', \emph{Journal of
  Symbolic Logic}, 68 (2003), 2, 527--550.

\bibitem{HamkinsArithmeticPotentialism2018}
\textsc{Hamkins, Joel~David}, `The modal logic of arithmetic potentialism and
  the universal algorithm', \emph{ArXiv e-prints},  (2018), 1--35.

\bibitem{ForcStruct2015}
\textsc{Hamkins, Joel~David}, \textsc{George Leibman}, and \textsc{Benedikt
  L\"{o}we}, `Structural connections between a forcing class and its modal
  logic', \emph{Israel Journal of Mathematics}, 207 (2015), 617--651.

\bibitem{HamkLowe}
\textsc{Hamkins, Joel~David}, and \textsc{Benedikt L\"{o}we}, `The modal logic
  of forcing', \emph{Transactions of the American Mathematical Society}, 360
  (2007), 4, 1793--1817.

\bibitem{Henk2015}
\textsc{Henk, Paula}, `Kripke models built from models of arithmetic', in
  Martin Aher, Daniel Hole, Emil Je{\v{r}}{\'a}bek, and Clemens Kupke, (eds.),
  \emph{Logic, Language, and Computation}, Springer, Berlin Heidelberg, 2015,
  pp. 157--174.

\bibitem{Ignatiev93}
\textsc{Ignatiev, Konstantin~N.}, `The provability logic for
  {$\Sigma_1$}-interpolability', \emph{Annals of Pure and Applied Logic}, 64
  (1993), 1, 1 -- 25.

\bibitem{innerMods2016}
\textsc{Inamdar, Tanmay}, and \textsc{Benedikt L\"{o}we}, `The modal logic of
  inner models', \emph{The Journal of Symbolic Logic}, 81 (2016), 1, 225--236.

\bibitem{Malcev73}
\textsc{Mal'cev, A.~I.}, \emph{Algebraic Systems}, Die Grundlehren der
  mathematischen Wissenschaften 192, Springer-Verlag Berlin Heidelberg, 1973.

\bibitem{marker2002modelIntro}
\textsc{Marker, D.}, \emph{Model Theory : An Introduction}, Graduate Texts in
  Mathematics, Springer New York, 2002.

\bibitem{rosenstein1982linear}
\textsc{Rosenstein, Joseph~G.}, \emph{Linear Orderings}, Academic Press, 1982.

\bibitem{Saveliev2019}
\textsc{Saveliev, D.~I.}, `On first-order expressibility of satisfiability in
  submodels', in \emph{R.~Iemhoff et al.~(eds.), WoLLIC~2019, Lecture Notes in
  CS 11541}, 2019, pp. 584--593.

\bibitem{SavelievShapirovsky2016}
\textsc{Saveliev, Denis~I.}, and \textsc{Ilya~B. Shapirovsky}, `On modal logic
  of submodels', in \emph{11th Advances in Modal Logic, Short Papers}, 2016,
  pp. 115–--119.

\bibitem{Shavrukov1988}
\textsc{Shavrukov, V.Yu.}, `The logic of relative interpretability over {P}eano
  arithmetic', Tech. Rep.~4, Steklov Mathematical Institute, Moscow, 1988. In
  Russian.

\bibitem{ShavrukovPhD93}
\textsc{Shavrukov, V.Yu.}, \emph{Subalgebras of diagonalizable algebras of
  theories containing arithmetic}, Instytut Matematyczny Polskiej Akademi Nauk,
  1993.

\bibitem{ShehtMink83}
\textsc{Shehtman, Valentin}, `Modal logics of domains on the real plane',
  \emph{Studia Logica}, 42 (1983), 63--80.

\bibitem{ShehtMedv1990}
\textsc{Shehtman, Valentin}, `Modal counterparts of {M}edvedev logic of finite
  problems are not finitely axiomatizable', \emph{Studia Logica: An
  International Journal for Symbolic Logic}, 49 (1990), 3, 365--385.

\bibitem{Solovay1976}
\textsc{Solovay, Robert~M.}, `Provability interpretations of modal logic',
  \emph{Israel Journal of Mathematics}, 25 (1976), 3, 287--304.

\bibitem{vanBenthem2014}
\textsc{van Benthem, Johan}, \emph{Logical Dynamics of Information and
  Interaction}, Cambridge University Press, New York, NY, USA, 2014.

\bibitem{Veltman96}
\textsc{Veltman, Frank}, `Defaults in update semantics', \emph{Journal of
  Philosophical Logic}, 25 (1996), 3.

\bibitem{VisserBigFirst}
\textsc{Visser, Albert}, `An overview of interpretability logic', in Marcus
  Kracht, Maarten de~Rijke, Heinrich Wansing, and Michael Zakharyaschev,
  (eds.), \emph{Advances in Modal Logic 1, papers from the first workshop on
  "Advances in Modal logic," held in Berlin, Germany, 8-10 October 1996.},
  {CSLI} Publications, 1996, pp. 307--359.

\bibitem{Visser14}
\textsc{Visser, Albert}, `The interpretability of inconsistency: Feferman's
  theorem and related results', \emph{Logic Group preprint series}, 318 (2014).

\bibitem{QuantHand14-2007}
\textsc{Westerst{\aa}hl, Dag}, `Quantifiers in formal and natural languages',
  in \emph{Handbook of Philosophical Logic, 2nd edition, v.14}, Springer, 2007,
  pp. 234--339.

\end{thebibliography}



\hide{

\AuthorAdressEmail{Denis  \ts I. Saveliev}{Steklov Mathematical Institute of Russian Academy of Sciences \\
Institute for Information Transmission Problems of Russian Academy of Sciences
}
{d.i.saveliev@gmail.com}
\AdditionalAuthorAddressEmail{Ilya \ts B. Shapirovsky}
{Steklov Mathematical Institute of Russian Academy of Sciences\\
Institute for Information Transmission Problems of Russian Academy of Sciences
}
{ilya.shapirovsky@gmail.com}

}
\end{document}